\pgfplotsset{compat=1.17} 
\tikzset{>=latex}
\title[Ricci flow does not preserve positive sectional curvature]{Ricci flow does not preserve positive sectional curvature in dimension four}
\author[R. G. Bettiol]{Renato G. Bettiol}
\address{City University of New York (Lehman College), Department of Mathematics, \newline \indent 250 Bedford Park Blvd W, Bronx, NY, 10468, USA}
\email{r.bettiol@lehman.cuny.edu}
\author[A. M. Krishnan]{Anusha M. Krishnan}
\address{Universit\"at M\"unster, Mathematisches Institut,\newline\indent  Einsteinstrasse 62, 48149 M\"unster, Germany}
\email{anusha.krishnan@uni-muenster.de}
\thanks{The first-named author is supported by the National Science Foundation award DMS-1904342. The second-named author is funded by the Deutsche Forschungsgemeinschaft (DFG, German Research Foundation) under Germany's Excellence Strategy EXC 2044 -- 390685587, Mathematics Münster: Dynamics--Geometry--Structure.}
\subjclass{53C21, 53E20}
\date{\today}
\newcommand{\dd}{\mathrm{d}}
\newcommand{\R}{\mathds{R}}
\newcommand{\C}{\mathds{C}}
\newcommand{\Z}{\mathds{Z}}
\newcommand{\B}{\mathcal{B}}
\newcommand{\g}{\mathfrak{g}}
\renewcommand{\k}{\mathfrak{k}}
\newcommand{\h}{\mathfrak{h}}
\newcommand{\m}{\mathfrak{m}}
\newcommand{\p}{\mathfrak{p}}
\newcommand{\n}{\mathfrak{n}}
\newcommand{\G}{\mathsf{G}}
\newcommand{\K}{\mathsf{K}}
\renewcommand{\H}{\mathsf{H}}
\renewcommand{\S}{\mathsf{S}}
\newcommand{\T}{\mathsf{T}}
\newcommand{\SU}{\mathsf{SU}}
\newcommand{\SO}{\mathsf{SO}}
\newcommand{\OO}{\mathsf{O}}
\newcommand{\gm}{\mathrm g}
\newcommand{\gGZ}{\mathrm g_{\mathrm{GZ}}}
\newcommand{\RR}{R}
\newcommand{\Ad}{\operatorname{Ad}}
\newcommand{\diag}{\operatorname{diag}}
\newcommand{\Sym}{\operatorname{Sym}}
\newcommand{\Ric}{\operatorname{Ric}}
\newtheorem{theorem}{Theorem}
\newtheorem{lemma}[theorem]{Lemma}
\newtheorem{claim}[theorem]{Claim}
\newtheorem{proposition}[theorem]{Proposition}
\newtheorem{mainthm}{\sc Theorem}
\newtheorem*{mainthm*}{\sc Theorem}
\newtheorem*{notation*}{Notation}
\theoremstyle{remark}
\newtheorem{remark}[theorem]{Remark}
\numberwithin{equation}{section}
\numberwithin{theorem}{section}
\begin{document}
\begin{abstract}
We find examples of cohomogeneity one metrics on $S^4$ and $\C P^2$ with positive sectional curvature that lose this property when evolved via Ricci flow. These metrics are arbitrarily small perturbations of Grove--Ziller metrics with flat planes that become instantly negatively curved under Ricci flow.
\end{abstract}
\maketitle


\section{Introduction}

The Ricci flow $\tfrac{\partial}{\partial t}\gm(t)=-2\Ric_{\gm(t)}$ of Riemannian metrics on a smooth manifold is an evolution equation that continues to drive a wide range of breakthroughs in Geometric Analysis, see e.g.~\cite{bamler-survey} for a survey. One of the keys to using Ricci flow is to control how the curvature of $\gm(t)$ evolves; in particular, which curvature conditions of the original metric $\gm(0)$ are preserved. 
Our main result establishes that, in dimension $n=4$, positive sectional curvature ($\sec>0$)  is \emph{not} among them:

\begin{mainthm}\label{mainthmA}
There exist smooth Riemannian metrics with $\sec>0$ on $S^4$ and $\C P^2$ that evolve under the Ricci flow to metrics with sectional curvatures of mixed sign.
\end{mainthm}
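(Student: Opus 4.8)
The plan is to argue entirely within the class of cohomogeneity one metrics. First, I would fix a cohomogeneity one action with codimension-two singular orbits on each of $S^4$ and $\C P^2$ (for instance, suitable actions of $\SO(3)$), so that the Grove--Ziller construction of invariant metrics with $\sec\ge 0$ applies. One then recalls the standard description: such an invariant metric is determined by finitely many smooth profile functions $f_i\colon [0,L]\to(0,\infty)$ — the squared lengths of a fixed invariant coframe along a unit-speed normal geodesic — subject to the Verdiani--Ziller smoothness conditions at the singular orbits $t=0,L$; the sectional curvatures of invariant planes and the Ricci tensor are explicit rational expressions in the $f_i$ and their $t$-derivatives; and the Ricci flow preserves the cohomogeneity one structure, reducing to a quasilinear parabolic system $\partial_s f_i=\Phi_i(f_j,f_j',f_j'')$ for the evolving profiles $f_i(t,s)$, with short-time solutions depending continuously on the initial profiles.

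Second, I would fix a Grove--Ziller metric $g_0$ on each manifold. By the glued nature of that construction, $g_0$ has $\sec\ge 0$ but possesses a plane $\sigma_0$ with $\sec_{g_0}(\sigma_0)=0$ at an interior point $p_0$ where $g_0$ is smooth, and $\sigma_0$ may be taken to be spanned by two vectors of the invariant frame, so that $\sec_{g_0}(\sigma_0)=F\big(f_i,f_i',f_i''\big)(t_0)$ for an explicit function $F$. The heart of the proof is to compute the first variation of $\sec(\sigma_0)$ along the flow: because $\sigma_0$ realizes the \emph{minimum} value $0$ of $\sec_{g_0}$, taking the frame $g_0$-orthonormal eliminates the contribution of the evolving metric in the denominator of $\sec$, and $\tfrac{\partial}{\partial s}\big|_{s=0}\sec(\sigma_0)$ reduces to substituting $\partial_s f_i=\Phi_i$ and its $t$-derivatives into $F$ and its derivatives. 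I expect this number to be \emph{strictly negative} for the Grove--Ziller profiles. Granting this, one then perturbs: there is a family $g_\varepsilon$ of cohomogeneity one metrics with $\sec_{g_\varepsilon}>0$, $C^\infty$-close to $g_0$ for small $\varepsilon>0$; since $\sec_{g_\varepsilon}(\sigma_0)=O(\varepsilon)$ while $\tfrac{\partial}{\partial s}\big|_{s=0}\sec_{g_\varepsilon(s)}(\sigma_0)\le -c<0$ survives the perturbation by continuity, one fixes a small time $s^*>0$ at which $g_0(s^*)$ has $\sec(\sigma_0)<0$ and still has positive scalar curvature (hence some positive plane), and concludes by $C^2$-continuity of the flow in its initial data that the same holds for $g_\varepsilon(s^*)$ once $\varepsilon$ is small. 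As $g_\varepsilon(0)=g_\varepsilon$ has $\sec>0$, this proves the theorem.

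\textbf{Main obstacle.} The crux is the sign computation in the second step. That it is not forbidden a priori is exactly the point that, in dimension four, $\sec$ obeys no maximum principle: the ``Laplacian plus reaction'' contributions to the evolution of the single curvature component $F$ need not respect the pointwise minimum attained by $\sigma_0$, and in cohomogeneity one coordinates they become a fourth-order-in-$t$ expression in the $f_i$ that must be evaluated by hand on the specific Grove--Ziller profiles and shown to be negative. Two further points need care: producing the positively curved perturbations $g_\varepsilon$ — most cleanly by a direct, $\varepsilon$-parametrized manipulation of the profile functions guided by the $\sec$-formulas, carried through the flow analysis — and checking that $s\mapsto\sec_{g_\varepsilon(s)}(\sigma_0)$ has the required continuity in $s$ uniformly in small $\varepsilon$, so that the conclusion persists as $\varepsilon\to 0$.
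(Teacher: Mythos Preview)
Your overall strategy matches the paper's: start from a Grove--Ziller metric $\gm_0$, show its Ricci flow immediately develops a negatively curved plane, perturb $\gm_0$ to nearby invariant metrics $\gm_\varepsilon$ with $\sec>0$, and conclude by continuous dependence of the flow on initial data. The paper packages these as: the first-variation step is the content of the authors' earlier work~\cite{bettiol-krishnan1}; the perturbation step is \Cref{mainthmB}; and continuous dependence is quoted from~\cite{BGI20} rather than derived from the cohomogeneity one reduction.

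However, you have inverted the relative difficulties. You identify the ``crux'' as the sign of $\tfrac{\partial}{\partial s}\big|_{s=0}\sec(\sigma_0)$ along the flow and treat ``producing the positively curved perturbations $g_\varepsilon$'' as a secondary point of care. In fact the first-variation computation was already carried out in~\cite{bettiol-krishnan1} and is taken here as input; the main new contribution of this paper is precisely the perturbation step (\Cref{mainthmB}), and your description of it as ``a direct, $\varepsilon$-parametrized manipulation of the profile functions guided by the $\sec$-formulas'' hides a genuine gap. In dimension four, $\sec>0$ is \emph{not} equivalent to positivity of the diagonal curvature entries in the invariant frame: the curvature operator of \eqref{eq:g-phi,psi,xi} is block diagonal with three $2\times 2$ blocks (\Cref{propn:curv_op}), and one must control sectional curvatures of \emph{all} $2$-planes, not just coordinate ones. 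The paper's key device is the Finsler--Thorpe trick (\Cref{prop:FTtrick}), specific to $n=4$: $\sec>0$ holds iff there exists $\tau\in\R$ with $R+\tau\,*\succ 0$. The proof of \Cref{mainthmB} then consists of exhibiting an explicit $\tau_s(r)$, perturbing the unique $\tau_0$ that works for $\gm_0$, and verifying by careful expansion in $s$ (Propositions~\ref{propn:Rs1-new}--\ref{prop:Rs23-positive}) that all three $2\times 2$ blocks of $R_s+\tau_s\,*$ become positive-definite for small $s>0$. Without this algebraic reduction, checking $\sec>0$ for the perturbed metrics is not tractable, and your proposal gives no indication of how you would do it.

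A minor point: your appeal to continuous dependence via the cohomogeneity one parabolic system is plausible but requires justifying well-posedness and stability for that degenerate system at the singular orbits; the paper sidesteps this by invoking the general result~\cite{BGI20} in H\"older spaces on the whole manifold.
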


In contrast, $\sec>0$ is preserved on closed manifolds of dimension $n\leq 3$, by the seminal work of Hamilton~\cite{hamilton-original}. Moreover, it was previously known~\cite{maximo2} that $\Ric>0$ is not preserved in dimension $n=4$, even among K\"ahler metrics, but these examples do not have $\sec>0$. Although \Cref{mainthmA} does not readily extend to all $n>4$, there are examples of homogeneous metrics on flag manifolds of dimensions $6$, $12$, and $24$ with $\sec>0$ that lose that property when evolved via Ricci flow, see \cite{bw-gafa-2007,cheung-wallach,abiev-nikonorov}.
A state-of-the-art discussion of Ricci flow invariant curvature conditions can be found in \cite{bcrw}, see also \Cref{rem:max-princ}.

\Cref{mainthmA} builds on our earlier result~\cite{bettiol-krishnan1} that certain metrics with $\sec\geq0$, introduced by Grove and Ziller~\cite{grove-ziller-annals} in a much broader context (see \Cref{subsec:GZ-metrics}), immediately acquire negatively curved planes on $S^4$ and $\C P^2$, when evolved under Ricci flow. In light of the appropriate continuous dependence of Ricci flow on its initial data \cite{BGI20}, the metrics in \Cref{mainthmA} are obtained by means~of:

\begin{mainthm}\label{mainthmB}
Every Grove--Ziller metric on $S^4$ or $\C P^2$ is the limit (in $C^\infty$-topology) of cohomogeneity one metrics with $\sec>0$.
\end{mainthm}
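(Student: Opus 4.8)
\emph{Proof proposal.}
The plan is to realize every Grove--Ziller metric $\gGZ$ as a $C^\infty$-limit of an explicit one-parameter family $\gm_\varepsilon$ of cohomogeneity one metrics with $\sec>0$, obtained by a small perturbation of the functions defining $\gGZ$, chosen so that the first variation of sectional curvature is strictly positive along every zero-curvature plane of $\gGZ$.

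First I would recall the standard parametrization (see \cite{grove-ziller-annals, bettiol-krishnan1}) of $\G$-invariant metrics for the relevant cohomogeneity one actions on $S^4$ and $\C P^2$: on the regular part such a metric is $\gm=\dd r^2+\gm_r$, where $r$ runs over an interval $[0,L]$ and $\gm_r=\sum_i f_i(r)^2\,\sigma_i^2$ is an $\Ad(\K)$-invariant metric on the three-dimensional principal orbit, diagonal in a fixed invariant coframe $\sigma_1,\sigma_2,\sigma_3$ with dual frame $X_1,X_2,X_3$, where $f_i>0$ is subject to smoothness conditions at $r=0,L$. From \cite{bettiol-krishnan1} I would import the structure of $\gGZ$ in these terms: $\sec_{\gGZ}\ge0$, and in fact $f_i''\le0$ throughout (forced by $\sec_{\gGZ}(\operatorname{span}(\partial_r,X_i))=-f_i''/f_i\ge0$); every zero-curvature plane of $\gGZ$ contains the radial direction $\partial_r$; and $\sec_{\gGZ}$ is \emph{strictly} positive off the compact set $Z\subset\operatorname{Gr}_2$ of those ``radial'' flat planes.

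I would then let $\gm_\varepsilon$ be the metric with defining functions $f_i+\varepsilon h_i$, where the $h_i$ are smooth and satisfy the same smoothness conditions at $r=0,L$ as the $f_i$ --- so $\gm_\varepsilon$ is again a genuine smooth cohomogeneity one metric on $S^4$ (resp.\ $\C P^2$) --- and are strictly concave, $h_i''<0$, on the interior of $\{f_i''=0\}$, with the cubic Taylor coefficient at a singular orbit prescribed (in lieu of $h_i''(0)<0$, which parity forbids) so that the first variation of $\sec$ along the flat normal plane there is positive. Then $\gm_\varepsilon\to\gGZ$ in $C^\infty$ as $\varepsilon\to0$, and it remains to prove $\sec_{\gm_\varepsilon}>0$ for all small $\varepsilon>0$. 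Arguing on the compact Grassmann bundle: away from a neighborhood of $Z$ one has $\sec_{\gGZ}\ge\mu>0$ uniformly, so $\sec_{\gm_\varepsilon}>0$ there once $\varepsilon$ is small; and near $Z$, the Riccati identity $R(\partial_r,X_i,X_j,\partial_r)=-f_i''f_i\,\delta_{ij}$ gives, for a radial plane $P=\operatorname{span}\bigl(\partial_r,\sum_i b_i X_i\bigr)$ with $\sec_{\gGZ}(P)=0$,
\[
\frac{\dd}{\dd\varepsilon}\bigg|_{\varepsilon=0}\sec_{\gm_\varepsilon}(P)=\frac{-\sum_i b_i^{2}\,h_i''\,f_i}{\sum_i b_i^{2}\,f_i^{2}}\;\ge\;c_0>0,
\]
uniformly over $Z$ --- here only indices with $f_i''=0$ at the relevant $r$ contribute, since $\sec_{\gGZ}(P)=0$ together with $f_j''\le0$ forces $b_j=0$ whenever $f_j''\ne0$. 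A Taylor expansion in $\varepsilon$ with remainder uniform over $\operatorname{Gr}_2$ (compactness) then gives $\sec_{\gm_\varepsilon}(P)\ge\tfrac12\varepsilon\,c_0-C\varepsilon^2>0$ for $P$ near $Z$, which completes the proof.

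I expect the only genuine difficulty to be controlling \emph{all} sectional curvatures of $\gm_\varepsilon$ simultaneously: an arbitrarily small perturbation might, a priori, lift the flat planes to positive curvature at order $\varepsilon$ while pushing some nearby non-radial plane to slightly negative curvature. This is prevented by combining (i) the uniform strict positivity of $\sec_{\gGZ}$ off $Z$ with (ii) the uniform positivity of the first variation along $Z$; the latter is clean precisely because the cohomogeneity one curvature operator is diagonal in the radial directions, which makes the first variation a manifestly positive quotient of positive-definite quadratic forms. The remaining care is at the singular orbits, where $\gGZ$ already carries flat normal planes whose curvature is governed by the third-order Taylor behavior of the $f_i$: one must verify that the perturbation can raise these while staying within the smoothness class at the singular orbit and without disturbing positivity nearby --- which the prescription for the $h_i$ above is tailored to ensure.
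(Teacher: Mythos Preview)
Your strategy is genuinely different from the paper's, and the difference is instructive. The paper does \emph{not} argue via first variation of sectional curvature; instead it chooses the specific perturbation given by linear interpolation toward the round (resp.\ Fubini--Study) metric and verifies $\sec>0$ via the Finsler--Thorpe trick: for each $s>0$ it exhibits an explicit function $\tau_s(r)$ with $R_s+\tau_s\,*\succ0$, checking positive-definiteness of each $2\times2$ block $(R_s)_i+\tau_s H$ by hand. This bypasses any need to classify the flat planes of $\gGZ$, and it also makes the smoothness verification routine (Lemma~4.1), since both endpoints of the interpolation are known smooth metrics.

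Your approach can be made to work, but as written it rests on two unproved assertions. First, and most seriously, the claim that \emph{every} zero-curvature plane of $\gGZ$ contains $\partial_r$ is not something you can simply ``import'' from \cite{bettiol-krishnan1}; that paper does not characterize the zero set of $\sec_{\gGZ}$. The claim is true, but proving it is nontrivial: in the region $r\in(0,r_0)$ the curvature operator $R_0$ is \emph{not} positive-semidefinite (indeed $\det(R_0)_2<0$), so flat planes are not simply $\ker R_0$. One has to use the Finsler--Thorpe shift $R_0+\tau_0*$ with $\tau_0=-\varphi_0'/2b^2$ and check that its kernel consists only of bivectors of the form $e_0\wedge v$ --- which in turn requires the inequality of Lemma~3.5 to be \emph{strict} on $(0,r_0)$. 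Without this, your compactness argument collapses: a single non-radial flat plane would have first variation governed by curvature components you have not controlled. Second, the smoothness conditions at the singular orbits (Proposition~3.1) are \emph{nonlinear} in $(\varphi,\psi,\xi)$ --- e.g.\ $\psi^2\pm\xi^2$ must be even/odd --- so ``$h_i$ satisfy the same smoothness conditions as the $f_i$'' does not imply $f_i+\varepsilon h_i$ give a smooth metric; you need to state conditions on the $h_i$ (e.g.\ $h_\psi+h_\xi$ even and $h_\psi-h_\xi$ odd near $r=0$) that are compatible with the concavity you impose. The paper's interpolation finesses both issues at once.
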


In full generality, the problem of perturbing $\sec\geq0$ to $\sec>0$ is notoriously difficult, see e.g.~\cite[Prob.~2]{wilking-survey}. Aside from clearly being unobstructed on $S^4$ and $\C P^2$, the deformation problem is facilitated here by the presence of natural directions for perturbation, given by the round metric and the Fubini--Study metric, respectively. Indeed, we deform $\sec\geq0$ into $\sec>0$ in \Cref{mainthmB} by linearly interpolating lengths of Killing vector fields for the $\SO(3)$-action which is isometric for both the Grove--Ziller metric $\gm_0$ and the standard metric $\gm_1$ on these spaces. The resulting $\SO(3)$-invariant metrics $\gm_s$, $s\in [0,1]$, are smooth and have $\sec>0$ for all sufficiently small $s>0$. For a lower-dimensional illustration, consider the $\T^2$-action on $S^3\subset \C^2$ via $(e^{i\theta_1},e^{i\theta_2})\cdot(z,w)=\big(e^{i\theta_1}z,e^{i\theta_2}w\big)$, and invariant metrics
\begin{equation*}
\gm=\dd r^2+ \varphi(r)^2\,\dd \theta_1^2+\xi(r)^2\,\dd \theta_2^2, \quad 0<r<\tfrac\pi2,
\end{equation*}
written along the geodesic segment $\gamma(r)=(\sin r,\cos r)$. 
The functions $\varphi$ and $\xi$ encode the $\gm$-lengths of the Killing fields $\frac{\partial}{\partial \theta_1}$ and $\frac{\partial}{\partial \theta_2}$ respectively, and must satisfy certain smoothness conditions at the endpoints $r=0$ and $r=\frac\pi2$. 
The unit round metric $\gm_1$ is given by setting $\varphi$ and $\xi$ to be $\varphi_1(r)=\sin r$ and $\xi_1(r)=\cos r$, while a Grove--Ziller metric $\gm_0$ corresponds to concave monotone functions $\varphi_0$ and $\xi_0$ that plateau at a constant value $b>0$ for at least half of $\left[0,\frac\pi2\right]$.
The curvature operator of $\gm$ is easily seen to be diagonal, with eigenvalues $-\varphi''/\varphi$, $-\xi''/\xi$, and $-\varphi'\xi'/\varphi\xi$, see e.g.~\cite[Sec.~4.2.4]{petersen-book-3}, so it has $\sec\geq0$ if and only if $\varphi$ and $\xi$ are concave and monotone, and $\sec>0$ if and only if they are \emph{strictly} concave and monotone. Thus, 
\begin{equation*}
\varphi_s=(1-s)\,\varphi_0+s\,\varphi_1 \quad \text{ and }\quad \xi_s=(1-s)\,\xi_0+s\,\xi_1
\end{equation*}
give rise to metrics $\gm_s$  deforming $\gm_0$ to have $\sec>0$ for $s>0$. It turns out that a similar approach works for proving \Cref{mainthmB}, with the addition of a third (nowhere vanishing) function $\psi$, to deal with $\SO(3)$-invariant metrics on $4$-manifolds. The biggest challenge is verifying that these metrics have $\sec>0$, since that is no longer equivalent to positive-definiteness of the curvature operator if $n\geq4$. To overcome this difficulty, we use a much simpler algebraic characterization of $\sec>0$ in dimension $n=4$, given by the Finsler--Thorpe trick (\Cref{prop:FTtrick}).

Motivated by the above, it is natural to ask whether the set of cohomogeneity one metrics with $\sec\geq0$ on a given closed manifold coincides with the closure (say, in $C^2$-topology) of the set of such metrics with $\sec>0$, if the latter is nonempty.
In contrast to \Cref{mainthmB}, there is some evidence to suggest that Grove--Ziller metrics on certain $7$-manifolds cannot be perturbed to have $\sec>0$, see~\cite[Sec.~4]{ziller-coh1survey}.

This paper is organized as follows. Background material on cohomogeneity one manifolds and the Finsler--Thorpe trick in dimension $4$ is presented in \Cref{sec:prelim}. The smoothness conditions and curvature operator of $\SO(3)$-invariant metrics on $S^4$ and $\C P^2$ are discussed in \Cref{sec:s4andcp2}. \Cref{sec:sec>0gs} contains the proof of \Cref{mainthmB}, focusing mainly on the case of $S^4$, since the proof for $\C P^2$ is mostly analogous. Finally, \Cref{mainthmA} is proved in \Cref{sec:pos-neg}.


\section{Preliminaries}\label{sec:prelim}

\subsection{Cohomogeneity one}
We briefly discuss the geometry of cohomogeneity one manifolds to fix notations, see \cite{mybook,bettiol-krishnan1,grove-ziller-annals,gz-inventiones,VZ18,ziller-coh1survey} for details.

A cohomogeneity one manifold is a Riemannian manifold $(M,\gm)$ endowed with an isometric action by a Lie group $\G$, such that the orbit space $M/\G$ is one-dimensional. 
Let $\pi\colon M\to M/\G$ be the projection map. Throughout, we assume $M/\G=[0,L]$ is a closed interval, and the nonprincipal orbits $B_-=\pi^{-1}(0)$ and $B_+=\pi^{-1}(L)$ are \emph{singular orbits}. In other words, $B_\pm$ are smooth submanifolds of dimension strictly smaller than the principal orbits $\pi^{-1}(r)$, $r\in (0,L)$, which are smooth hypersurfaces of $M$.
Fix $x_{-}\in B_{-}$, and consider a minimal geodesic $\gamma(r)$ in $M$ joining $x_{-}$ to $B_{+}$, meeting it at $x_{+}=\gamma(L)$; that is, $\gamma$ is a horizontal lift of $[0,L]$ to $M$. Denote by $\K_{\pm}$ the isotropy group at $x_{\pm}$, and by $\H$ the isotropy at $\gamma(r)$, for $r\in (0,L)$. By the Slice Theorem, given $r_{\mathrm{max}}^\pm>0$ so that $r_{\mathrm{max}}^+ +r_{\mathrm{max}}^-=L$, 
the tubular neighborhoods $D(B_{-}) = \pi^{-1}\left(\left[0, r_{\mathrm{max}}^-\right]\right)$ and $D(B_{+}) = \pi^{-1}\left(\left[L-r_{\mathrm{max}}^+ ,L\right]\right)$ of the singular orbits are disk bundles over $B_-$ and $B_+$. Let $D^{l_{\pm}+1}$ be the normal disks to $B_{\pm}$ at $x_{\pm}$. Then $\K_{\pm}$ acts transitively on the boundary $\partial D^{l_{\pm}+1}$, with isotropy $\H$, so $\partial D^{l_{\pm}+1} = S^{l_{\pm}} = \K_{\pm}/\H$, and the $\K_{\pm}$-action on $\partial D^{l_{\pm}+1}$ extends to a $\K_\pm$-action on all of $D^{l_{\pm}+1}$. Moreover, there are equivariant diffeomorphisms $D(B_{\pm}) \cong\G\times_{\K_{\pm}}D^{l_{\pm}+1}$, and $M\cong D(B_-)\cup D(B_+)$, where the latter is given by gluing these disk bundles along their common boundary $\partial D(B_{\pm}) \cong\G\times_{\K_{\pm}} S^{l_{\pm}}\cong \G/\H$.
In this situation, one associates to $M$ the \emph{group diagram} 
\begin{equation*}
\H\subset\{\K_-,\K_+\}\subset \G.    
\end{equation*}
Conversely, given a group diagram as above, where $\K_{\pm}/\H$ are spheres, there exists a cohomogeneity one manifold $M$ given as the union of the above disk bundles.

Fix a bi-invariant metric $Q$ on the Lie algebra $\g$ of $\G$, and set $\n = \h^\perp$, where $\h\subset\g$ is the Lie algebra of $\H$. Identifying $\n\cong T_{\gamma(r)}(\G/\H)$ for each $0<r<L$ via action fields 
$X\mapsto X^*_{\gamma(r)}$, 
any $\G$-invariant metric on $M$ can be written~as
\begin{equation}\label{eqn:coh1metric}
 \gm =\dd r^2 + \gm_r, \quad  0<r<L,  
\end{equation}
along the geodesic $\gamma(r)$, where $\gm_r$ is a $1$-parameter family of left-invariant metrics on $\G/\H$, i.e., of $\Ad(\H)$-invariant metrics on $\n$. As $r\searrow0$ and $r\nearrow L$, the metrics $\gm_r$ degenerate, according to how $\G(\gamma(r))\cong\G/\H$ collapse to $B_\pm=\G/\K_\pm$. Namely, they satisfy \emph{smoothness conditions} that characterize when a tensor defined by means of \eqref{eqn:coh1metric} on $M\setminus (B_-\cup B_+)\cong (0,L)\times \G/\H$  extends smoothly to all of $M$, see \cite{VZ18}.

\subsubsection{Grove--Ziller metrics}\label{subsec:GZ-metrics}
If both singular orbits $B_\pm$ of a cohomogeneity one manifold $M$ have codimension two, then $M$ can be endowed with a new $\G$-invariant metric $\gGZ$ with $\sec\geq0$, as shown in the celebrated work of Grove and Ziller~\cite[Thm.~2.6]{grove-ziller-annals}. 
We now describe this construction, building metrics with $\sec\geq0$ on each disk bundle $D(B_\pm)$ that restrict to a fixed product metric $\dd r^2+b^2 Q|_{\mathfrak n}$ near $\partial D(B_\pm)\cong \G/\H$, so that these two pieces can be isometrically glued together.

Consider one such disk bundle $D(B)$ at a time, say over a singular orbit $B=\G/\K$, and let $\k$ be the Lie algebra of $\K$.
Set $\m = \k^\perp$ and $\p = \h^\perp \cap \k$, so that $\g=\m\oplus\p\oplus\h$ is a $Q$-orthogonal direct sum. Since $\p$ is $1$-dimensional, the metric $Q_{a,b}$ on $\G$, given~by 
\begin{align*}
  Q_{a,b}|_\m := b^2\,  Q|_\m, \qquad Q_{a,b}|_\p := ab^2\, Q|_\p, \qquad Q_{a,b}|_\h :=b^2\, Q|_\h,
\end{align*}
has $\sec \geq 0$ whenever $0<a \leq \frac{4}{3}$ and $b>0$, see \cite[Prop.~2.4]{grove-ziller-annals} or \cite[Lemma~3.2]{bm-mathann}.
Fix $1<a\leq \frac43$, and let $r_{\mathrm{max}}>0$ be such that
\begin{equation}\label{eq:MVT-obstruction}
y:=\tfrac{ \rho\sqrt{a}}{\sqrt{a-1}} \;\;\text{ satisfies }\;\; y< \, r_{\mathrm{max}},   
\end{equation}
where $\rho=\rho(b)$ is the radius of the circle(s) $\K/\H$ endowed with the metric $b^2\,Q|_{\p}$. 
Then, we can find a smooth nondecreasing function $f\colon \left[0,r_{\mathrm{max}}\right]\to\R$ and some $0<r_0<r_{\mathrm{max}}$, with $f(0)=0$, $f'(0)=1$, $f^{(2n)}(0)=0$ for all $n\in\mathds N$, $f''(r)\leq 0$ for all $r\in \left[0,r_{\mathrm{max}}\right]$, $f^{(3)}(r) > 0$ for all $r\in [0, r_0)$, and $f(r) \equiv y$ for all $r\in \left[r_0, r_{\mathrm{max}}\right]$.
The rotationally symmetric metric $\gm_{D^2} = \dd r^2 + f(r)^2 \dd\theta^2$, $0<r\leq r_{\mathrm{max}}$, on the punctured disk $D^2\setminus\{0\}$  extends to a smooth metric $\gm_{D^2}$ on $D^2$ with $\sec\geq0$ that, near $\partial D^2=\{r=r_{\mathrm{max}}\}$, is isometric to a round cylinder $\left[r_0, r_{\mathrm{max}}\right]\times S^1(y)$ of radius $y$.
Thus, the product manifold $(\G\times D^2, Q_{a,b} + \gm_{D^2})$ has $\sec \geq 0$, and so does the orbit space $D(B)\cong \G\times_\K D^2$ of the $\K$-action on $\G\times D^2$, when endowed with the metric $\gGZ$ that makes the projection map $\Pi\colon (\G\times D^2, Q_{a,b} + \gm_{D^2})\to (\G\times_\K D^2,\gGZ)$ a Riemannian submersion. Writing this metric $\gGZ$ in the form \eqref{eqn:coh1metric}, we have
\begin{equation}\label{eq:gGZcoh1form}
    \gGZ=\dd r^2+b^2\, Q|_{\m} +\tfrac{f(r)^2a}{f(r)^2 + a \rho^2}b^2\,Q|_{\p}, \quad 0<r\leq r_{\mathrm{max}},
\end{equation}
see e.g.~\cite[Lemma~2.1, Rem.~2.7]{grove-ziller-annals} or \cite[Lemma 3.1~(ii)]{bm-mathann}.
In particular, $\gGZ=\dd r^2 +b^2\, Q|_{\n}$ for all $r\in \left[r_0, r_{\mathrm{max}}\right]$, since $\tfrac{f(r)^2a}{f(r)^2 + a \rho^2}\equiv 1$ for all such $r$; hence $(D(B),\gGZ)$ is isometric to the prescribed product metric near $\partial D(B)\cong \G/\H$. 

This construction can be performed on each disk bundle $D(B_\pm)$ with 
the same $b>0$, provided $r_{\mathrm{max}}^\pm>0$ are chosen sufficiently large so that \eqref{eq:MVT-obstruction} holds for the corresponding radii $\rho_\pm(b)$ of the circles $\K_\pm/\H$ endowed with the metric $b^2\,Q|_{\mathfrak p_\pm}$.
Gluing these two disk bundles together, we obtain the desired $\G$-invariant metric $\gGZ$ with $\sec\geq0$ on $M\cong D(B_-)\cup D(B_+)$ and $M/\G=[0,L]$, where $L = r_{\mathrm{max}}^+ + r_{\mathrm{max}}^-$.
Although it is natural to pick the same (largest) value for $r_{\mathrm{max}}^\pm$, so that the gluing occurs at $r=\frac{L}{2}$, it is convenient to not impose this restriction.
Note that 
\begin{equation}\label{eq:lowerboundL}
    L  = r_{\mathrm{max}}^+ + r_{\mathrm{max}}^- > 
    \tfrac{\sqrt{a}}{\sqrt{a-1}}\,\big(\rho_+(b) +\rho_-(b)\big),
\end{equation}
if the gluing interface $\partial D(B_\pm)$ is isometric to $(\G/\H,b^2 Q|_\n)$.
Conversely, given $1<a\leq\frac43$, $b>0$, and $L$ satisfying \eqref{eq:lowerboundL}, there exists a Grove--Ziller metric on $M$ with gluing interface $(\G/\H,b^2 Q|_\n)$, induced by $Q_{a,b}+\gm_{D^2}$, and with $M/\G=[0,L]$.


\begin{remark}\label{rem:psec0r1]}
Although this is not a requirement in the original Grove--Ziller construction, we assume that $f^{(3)}(r) > 0$ on $[0, r_0)$, hence the curvature of $(D^2, \gm_{D^2})$ is monotonically decreasing for $r\in [0,r_0)$.  As a consequence, for each $0<r_*<r_0$, there is a constant $c>0$, depending on $r_*$, so that $\sec_{\gm_{D^2}}  \geq c$ for all $r\in [0,r_*]$.
\end{remark}

\subsection{Finsler--Thorpe trick}
In order to verify $\sec>0$ on Riemannian $4$-manifolds, we shall use a result that became known in the Geometric Analysis community as \emph{Thorpe's trick}, attributed to Thorpe~\cite{Thorpe72}, but that actually follows from much earlier work of Finsler~\cite{finsler}, and is often referred to 
as \emph{Finsler's Lemma} in Convex Algebraic Geometry.
This rather multifaceted result is also known as the \emph{$S$-lemma}, or \emph{$S$-procedure}, in the mathematical optimization and control literature, see e.g.~\cite{slemma-survey}. Details and other geometric perspectives can be found in \cite{bkm-siaga}.

Let $\Sym^2_{\mathrm b}(\wedge^2\R^n)\subset \Sym^2(\wedge^2\R^n)$ be the subspace of symmetric endomorphisms $R\colon \wedge^2\R^n\to\wedge^2\R^n$ that satisfy the first Bianchi identity. These objects are called \emph{algebraic curvature operators}, and serve as pointwise models for the curvature operators of Riemannian $n$-manifolds.
For instance, $R\in\Sym^2_{\mathrm b}(\wedge^2\R^n)$ is said to have $\sec\geq0$, respectively $\sec>0$, if the restriction of the quadratic form $\langle R(\sigma),\sigma\rangle$ to the oriented Grassmannian $\operatorname{Gr}_2^+(\R^n)\subset\wedge^2\R^n$ of $2$-planes is nonnegative, respectively positive. A Riemannian manifold $(M^n,\gm)$ has $\sec\geq0$, or $\sec>0$, if and only if its curvature operator $R_p\in\Sym^2_{\mathrm b}(\wedge^2 T_pM)$ has $\sec\geq0$, or $\sec>0$, for all $p\in M$.

The orthogonal complement to $\Sym^2_{\mathrm b}(\wedge^2\R^n)$ is identified with $\wedge^4\R^n$; so, if $n=4$, it 
is $1$-dimensional, and spanned by the Hodge star operator $*$. 
Since $\sigma\in\wedge^2\R^4$ satisfies $\sigma\wedge\sigma=0$ if and only if $\langle *\sigma,\sigma\rangle=0$, the quadric defined by $*$ in $\wedge^2\R^4$ is precisely the Pl\"ucker embedding $\operatorname{Gr}_2^+(\R^4)\subset\wedge^2\R^4$. 
As shown by Finsler~\cite{finsler}, a quadratic form $\langle R(\sigma),\sigma\rangle$ is nonnegative when restricted to the quadric $\langle *\sigma,\sigma\rangle=0$ if and only if some linear combination of $R$ and $*$ is positive-semidefinite, yielding:

\begin{proposition}[Finsler--Thorpe trick]\label{prop:FTtrick}
Let $R\in \Sym^2_{\mathrm b}(\wedge^2 \R^4)$ be an algebraic curvature operator. Then $R$ has $\sec\geq0$, respectively $\sec>0$, if and only if there exists $\tau\in\R$ such that $R+\tau\, *\succeq0$, respectively $R+\tau\, *\succ0$.
\end{proposition}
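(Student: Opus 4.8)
\emph{Proof proposal.} The two ``easy'' implications are immediate: if $R+\tau\,*\succeq 0$ for some $\tau\in\R$, then for every oriented $2$-plane $\sigma\in\operatorname{Gr}_2^+(\R^4)$ we have $\langle *\sigma,\sigma\rangle=0$ --- because, as recalled above, the Pl\"ucker quadric is precisely the zero set of $*$ in $\wedge^2\R^4$ --- and hence $\langle R\sigma,\sigma\rangle=\langle (R+\tau\,*)\sigma,\sigma\rangle\geq 0$; the same computation with $R+\tau\,*\succ 0$ gives $\langle R\sigma,\sigma\rangle>0$, since $\sigma\neq 0$. So the real content is the converse, and I would prove it by a concavity argument in the single real variable $\tau$.

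Set $g(\tau):=\lambda_{\min}(R+\tau\,*)$, the smallest eigenvalue of $R+\tau\,*$. Writing $g(\tau)=\min_{|\sigma|=1}h_\sigma(\tau)$ with $h_\sigma(\tau):=\langle R\sigma,\sigma\rangle+\tau\langle *\sigma,\sigma\rangle$ affine in $\tau$, we see that $g$ is concave and continuous. Here is where $n=4$ is used: both eigenspaces $\wedge^{\pm}=\ker(*\mp\mathrm{id})$ of the Hodge star on $\wedge^2\R^4$ are nonzero, so taking $v$ a unit vector in $\wedge^{\pm}$ gives $g(\tau)\le\langle(R+\tau\,*)v,v\rangle=\langle Rv,v\rangle\pm\tau$, whence $g(\tau)\to-\infty$ as $\tau\to\mp\infty$. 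A concave function on $\R$ that tends to $-\infty$ in both directions attains its maximum; let $m:=g(\tau_*)=\max_{\tau\in\R}g(\tau)$.

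It now suffices to show that $\sec\geq 0$ forces $m\geq 0$, and $\sec>0$ forces $m>0$: indeed $m=\lambda_{\min}(R+\tau_*\,*)$, so these give $R+\tau_*\,*\succeq 0$, resp.\ $\succ 0$. Let $V$ be the $m$-eigenspace of $R+\tau_*\,*$, so that its unit sphere $S(V)$ is exactly the set of unit vectors where the minimum defining $g(\tau_*)$ is attained. Since $\tau_*$ maximizes the concave function $g=\min_\sigma h_\sigma$, a standard first-order argument (Danskin's formula for the one-sided derivatives of an infimum of affine functions over a compact set) yields unit vectors $\sigma_1,\dots,\sigma_k\in V$ and weights $t_i\ge 0$, $\sum_i t_i=1$, with $\sum_i t_i\langle *\sigma_i,\sigma_i\rangle=0$. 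Thus $\sigma\mapsto\langle *\sigma,\sigma\rangle$ is neither strictly positive nor strictly negative on $S(V)$. If it vanishes at some unit $\sigma_0\in V$, good; otherwise it is positive at one unit vector and negative at another, and these cannot be antipodal, so $\dim V\ge 2$, $S(V)$ is connected, and the intermediate value theorem again produces a unit $\sigma_0\in V$ with $\langle *\sigma_0,\sigma_0\rangle=0$. In either case $\sigma_0\in\operatorname{Gr}_2^+(\R^4)$ and $\langle R\sigma_0,\sigma_0\rangle=\langle (R+\tau_*\,*)\sigma_0,\sigma_0\rangle=m$, so $\sec\geq 0$ forces $m\ge 0$, while $\sec>0$ forces $m>0$ (as $m=0$ would exhibit $\sigma_0$ as a flat $2$-plane).

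The step I expect to be the main obstacle is the last one: turning the existence of the maximizer $\tau_*$ into the fact that the top eigenspace $V$ of $R+\tau_*\,*$ meets the Pl\"ucker quadric. This needs the first-order description of the one-sided derivatives (equivalently, the superdifferential) of an infimum of affine functions over a compact set, together with a separate, easy check in the degenerate case $\dim V=1$, where a generator $\sigma_0$ of $V$ satisfies $\langle *\sigma_0,\sigma_0\rangle=0$ automatically. An alternative that avoids all eigenvalue estimates is to invoke the classical fact (Dines) that the joint image $\big\{\big(\langle R\sigma,\sigma\rangle,\langle *\sigma,\sigma\rangle\big):\sigma\in\wedge^2\R^4\big\}$ is a convex cone in $\R^2$: the hypothesis $\sec\geq 0$ says this cone avoids the open ray $\{(y_1,0):y_1<0\}$, a line through the origin separates them, and indefiniteness of $\langle *\,\cdot\,,\cdot\,\rangle$ forces the coefficient of $y_1$ in that line to be nonzero, so dividing by it yields the required $\tau$. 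I would nonetheless present the first, self-contained argument, as it stays entirely within the language of curvature operators and sectional curvature.
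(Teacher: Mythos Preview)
Your argument is correct. The paper does not give a proof of this proposition at all: it simply observes that the Pl\"ucker quadric $\operatorname{Gr}_2^+(\R^4)\subset\wedge^2\R^4$ is the zero set of the quadratic form $\langle *\sigma,\sigma\rangle$, and then invokes Finsler's lemma (the $S$-lemma) from \cite{finsler} as a black box, with further references to \cite{bkm-siaga,slemma-survey}. Your proposal, by contrast, supplies a self-contained proof.

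Your main route via the concave function $g(\tau)=\lambda_{\min}(R+\tau\,*)$ is a clean way to reprove Finsler's lemma in this specific setting, and the key step --- that the lowest eigenspace $V$ at the maximizer $\tau_*$ meets the quadric $\{\langle *\sigma,\sigma\rangle=0\}$ --- is handled correctly, including the $\dim V=1$ case (where simplicity of the eigenvalue makes $g$ differentiable and $g'(\tau_*)=\langle *\sigma_0,\sigma_0\rangle=0$). One small slip of language: near the end you call $V$ the ``top'' eigenspace, but you defined it (correctly) as the eigenspace for the \emph{smallest} eigenvalue $m$. Your alternative via Dines' convexity of the joint numerical range is essentially the classical route to Finsler's lemma and is closest in spirit to what the paper is citing; either approach is fine, and your eigenvalue argument has the advantage of yielding the strict case $\sec>0\Leftrightarrow R+\tau\,*\succ0$ with no extra work.
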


\begin{remark}\label{rem:set-of-taus}
For a given $R\in \Sym^2_{\mathrm b}(\wedge^2 \R^4)$ with $\sec\geq0$, the set of $\tau\in\R$ such that $R+\tau\,*\succeq0$ is a closed interval $[\tau_{\mathrm{min}},\tau_{\mathrm{max}}]$, which degenerates to a single point, i.e., $\tau_{\mathrm{min}}=\tau_{\mathrm{max}}$, if and only if $R$ does not have $\sec>0$, see \cite[Prop.~3.1]{bkm-siaga}
\end{remark}

The equivalences given by Finsler--Thorpe's trick offer substantial computational advantages to test for $\sec\geq0$ or $\sec>0$, see the discussion in~\cite[Sec.~5.4]{bkm-siaga}.

\section{\texorpdfstring{Cohomogeneity one structure of $S^4$ and $\C P^2$}{Cohomogeneity one structure of the sphere and complex projective plane}}\label{sec:s4andcp2}

Both $S^4$ and $\C P^2$ admit a cohomogeneity one action by $\G=\SO(3)$ as we now recall, see~\cite[Sec.~3]{bettiol-krishnan1} and \cite[Sec.~2]{ziller-coh1survey} for details. The $\G$-action on $S^4$ is the restriction to the unit sphere of the $\SO(3)$-action by conjugation on the 
space of symmetric traceless $3\times 3$ real matrices, while the $\G$-action on $\C P^2$ is a subaction of the transitive $\SU(3)$-action.
The corresponding orbit spaces are $S^4/\G=\left[0,\frac\pi3\right]$ and $\C P^2/\G=\left[0,\frac\pi4\right]$, endowing $S^4$ with the round metric with $\sec\equiv1$, and $\C P^2$ with the Fubini--Study metric with $1\leq\sec\leq4$.
Their group diagrams are as follows:
\begin{align*}
   S^4&:  &\Z_2\oplus\Z_2\cong \S(\OO(1)\OO(1)\OO(1)) &\subset \{  \S(\OO(1)\OO(2)),\S(\OO(2)\OO(1))\} \subset\SO(3),\\
   \C P^2&: &\Z_2\cong \langle\operatorname{diag}(-1,-1,1) \rangle &\subset \{ \S(\OO(1)\OO(2)), \SO(2)_{1,2}\} \subset\SO(3),
\end{align*}
according to an appropriate choice of minimal geodesic $\gamma(r)$, $r\in [0,L]$, see~\cite[Sec.~3]{bettiol-krishnan1}. In both cases, since $\H$ is discrete, $\n \cong\g= \mathfrak{so}(3)$. 
We henceforth fix $Q$ to be the bi-invariant metric such that $\{E_{23}, E_{31}, E_{12}\}$ is a $Q$-orthonormal basis of $\mathfrak{so}(3)$, where $E_{ij}$ is the skew-symmetric $3\times 3$ matrix with a $+1$ in the $(i,j)$ entry, a $-1$ in the $(j,i)$ entry, and zeros in the remaining entries.
The $1$-dimensional subspaces $\n_k=\operatorname{span}(E_{ij})$, where $(i,j,k)$ is a cyclic permutation of $(1,2,3)$, are pairwise inequivalent for the adjoint action of $\H$ in the case of $S^4$, while $\n_1$ and $\n_2$ are equivalent in the case of $\C P^2$, but neither is equivalent to $\n_3$.

Collectively denoting $S^4$ and $\C P^2$ with the above cohomogeneity one structures by $M^4$, 
we consider \emph{diagonal} $\G$-invariant metrics $\gm$ on $M^4$, i.e., metrics of the form
\begin{equation}\label{eq:g-phi,psi,xi}
 \gm =\dd r^2+ \varphi(r)^2 \, Q|_{\n_1} + \psi(r)^2 \, Q|_{\n_2} + \xi(r)^2 \, Q|_{\n_3}, \quad 0<r<L,
\end{equation}
where $L=\frac\pi3$ or $L=\frac\pi4$ according to whether $M^4=S^4$ or $M^4=\C P^2$, cf.~\eqref{eqn:coh1metric}. Note that every $\G$-invariant metric on $S^4$ is of the above form, i.e., 
$\n_k$ are pairwise orthogonal, but $\n_1$ and $\n_2$ need not be orthogonal for all $\G$-invariant metrics on $\C P^2$, i.e., the off-diagonal term $\gm(E_{23},E_{31})$ need not vanish identically. 
The standard metric on $M^4$, with curvatures normalized as above, is obtained setting $\varphi,\psi,\xi$ to
\begin{equation}\label{eq:can-phi,psi,xi}
\begin{aligned}
   S^4&:  &\varphi_1(r)&=2\sin r,  &\psi_1(r)&= \sqrt{3}\cos r + \sin r,  &\xi_1(r)&= \sqrt{3}\cos r - \sin r, \\
   \C P^2&: &\varphi_1(r)&=\sin r,  &\psi_1(r)&= \cos r,  &\xi_1(r)&= \cos 2r,
\end{aligned}
\end{equation}
see \Cref{fig:canmetrics} below for their graphs.

\subsection{Smoothness}
The conditions required of $\varphi, \psi, \xi$ for the metric $\gm$ in \eqref{eq:g-phi,psi,xi}, which is defined on the open dense set $M^4\setminus (B_-\cup B_+)\cong (0,L)\times \G/\H$, to extend smoothly to all of $M^4$ can be extracted from~\cite[Sec.~3.1, 3.2]{VZ18} as follows:

\begin{proposition}\label{prop:smoothness}
The $\G$-invariant metric \eqref{eq:g-phi,psi,xi} on $M^4\setminus (B_-\cup B_+)$ extends to a smooth metric on $M^4$ if and only if $\varphi,\psi,\xi$ extend smoothly to $r=0$ and $r=L$ satisfying the following, where $\phi_k$ are smooth, $z=L-r$, and $\varepsilon>0$ is small:

\smallskip
\begin{center}
\begin{tabular}{|c|l|}
\hline
$M^4$ & \rule[-1.2ex]{0pt}{0pt} \rule{0pt}{2.2ex} Smoothness conditions on $\varphi,\psi,\xi$
\\
\hline \noalign{\medskip} \hline 
$\begin{array}{c}
S^4 \\[5pt]
L =\frac\pi3
\end{array}$  &  $\begin{array}{l}
          {\rm (i)} \; \varphi(0) = 0,\,  \rule{0pt}{2.5ex} \varphi'(0) = 2, \,\varphi^{(2n)}(0) = 0, \text{  for all } n \geq 1, \\[1pt] 
          {\rm (ii)} \;\psi(r)^2 + \xi(r)^2 = \phi_1(r^2), \text{  for all } r\in [0,\varepsilon), \\[1pt] 
          {\rm (iii)} \;\psi(r)^2 - \xi(r)^2 = r\,\phi_2(r^2), \text{  for all } r\in [0,\varepsilon), \\[3pt]
          {\rm (iv)} \;\xi(L) = 0, \, \xi'(L) = -2, \, \xi^{(2n)}(L) = 0, \text{  for all } n \geq 1, \\[1pt]
          {\rm (v)} \;\psi(z)^2 + \varphi(z)^2 = \phi_3(z^2), \text{  for all } z\in [0,\varepsilon), \\[1pt] 
          {\rm (vi)} \;\psi(z)^2 - \varphi(z)^2 = z\,\phi_4(z^2), \text{  for all } z\in [0,\varepsilon).   
        \end{array}$   \\
\hline \noalign{\smallskip} \hline 
$\begin{array}{c}
\C P^2 \\[5pt]
L =\frac\pi4
\end{array}$  & $\begin{array}{l}
         {\rm (i)} \; \varphi(0) = 0,\,  \rule{0pt}{2.5ex} \varphi'(0) = 1, \,\varphi^{(2n)}(0) = 0, \text{  for all } n \geq 1, \\[1pt] 
          {\rm (ii)} \;\psi(r)^2 + \xi(r)^2 = \phi_5(r^2), \text{  for all } r\in [0,\varepsilon), \\[1pt] 
         {\rm (iii)} \; \psi(r)^2 - \xi(r)^2 = r^2\,\phi_6(r^2), \text{  for all } r\in [0,\varepsilon), \\[3pt]
          {\rm (iv)} \; \xi(L) = 0, \, \xi'(L) = -2, \, \xi^{(2n)}(L) = 0, \text{  for all } n \geq 1, \\[1pt]
         {\rm (v)} \;  \psi(z)^2 + \varphi(z)^2 = \phi_7(z^2), \text{  for all } z\in [0,\varepsilon), \\[1pt] 
         {\rm (vi)} \;  \psi(z)^2 - \varphi(z)^2 = z\,\phi_8(z^2), \text{  for all } z\in [0,\varepsilon). 
        \end{array}$ \\
\hline
\end{tabular}
\end{center}
\end{proposition}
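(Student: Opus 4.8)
The plan is to derive the smoothness conditions from the general theory of smooth invariant tensors on cohomogeneity one manifolds, as developed in~\cite[Sec.~3.1, 3.2]{VZ18}, specialized to the two group diagrams recalled above. The key point is that near a singular orbit $B=\G/\K$ of codimension $l+1$, a $\G$-invariant metric written in the form \eqref{eqn:coh1metric} extends smoothly across $B$ if and only if the family $\gm_r$ of $\Ad(\H)$-invariant inner products on $\n$, regarded as a curve in the space of such inner products, extends to an even, respectively appropriately equivariant, function of the normal coordinate, matching the behavior of the ``slice representation'' of $\K$ on the normal disk $D^{l+1}$. Concretely, one decomposes $\n=\mathfrak m\oplus\mathfrak p$ as in \Cref{subsec:GZ-metrics}, where $\mathfrak p\cong T_{x}D^{l+1}$ is acted on by $\K$ via rotations, and the smoothness conditions become: (a) the restriction of $\gm_r$ to $\mathfrak m$ extends to a smooth even function of $r$ with the value at $r=0$ being a $\K$-invariant inner product (the metric on $B$); (b) the restriction to $\mathfrak p$ behaves like $r^2$ times a smooth even function, with the right normalization of the linear term dictated by the requirement that the cone over $S^l$ be smooth at the origin; and (c) the mixed $\mathfrak m$-$\mathfrak p$ block vanishes to the appropriate order.

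Next I would carry this out for each of $S^4$ and $\C P^2$ separately, at each of the two singular orbits. For $S^4$ at $r=0$: the singular orbit $B_-=\G/\K_-$ has $\K_-=\S(\OO(1)\OO(2))$, which is $l=2$ (the normal disk is $D^3$), and the Killing field $E_{23}$ (spanning $\n_1$) is the one that ``collapses'', so $\varphi$ must vanish like the radial coordinate on $D^3$: this gives (i), with $\varphi'(0)=2$ fixed by the chosen normalization making the round metric have $\sec\equiv1$, and all even derivatives vanishing because smooth functions of $\|v\|$ on $D^3$ are even functions of the radius. The remaining directions $\n_2,\n_3$ are rotated into each other by $\K_-$, so what must be smooth even functions of $r$ (equivalently of $r^2$) are the $\K_-$-invariant combinations $\psi^2+\xi^2$ and, since $\n_2\oplus\n_3$ carries the standard rotation action, the function $\psi^2-\xi^2$ must change sign like $r$, giving (ii) and (iii). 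The analysis at $r=L$ for $S^4$ is identical after swapping the roles of $\varphi$ and $\xi$ and using $\K_+=\S(\OO(2)\OO(1))$, giving (iv), (v), (vi) in terms of $z=L-r$. For $\C P^2$ at $r=0$ one has the same $\K=\S(\OO(1)\OO(2))$ but the normalization of $\varphi'(0)$ differs (it is $1$ for the Fubini--Study scaling); crucially, at the other singular orbit $\K_+=\SO(2)_{1,2}$ acts on the normal $2$-disk as rotation, which forces the different exponent in condition (iii) of the $\C P^2$ table, $\psi^2-\xi^2=r^2\phi_6(r^2)$ rather than $r\phi_2(r^2)$ — this reflects that the collapsing circle has a $\Z_2$ versus trivial stabilizer subtlety in $\H$, changing which parity the anisotropy must have.

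I expect the main obstacle to be pinning down the exact order of vanishing and the parity in conditions (iii) and (vi) — i.e., the distinction between $r\,\phi(r^2)$, $r^2\,\phi(r^2)$, and $z\,\phi(z^2)$ — since these depend delicately on whether $\H$ contains an element acting as $-\mathrm{Id}$ on the relevant $2$-plane in $\n$, on the precise identification of the isotropy representation with the slice representation, and on the choice of minimal geodesic $\gamma$ fixing the group diagram. One must track, for each singular orbit, which $\n_k$ is tangent to the collapsing circle factor $\mathfrak p$, which pair $\n_i\oplus\n_j$ makes up $\mathfrak m$ and how $\K$ acts on it, and then read off from the ``normal form'' in~\cite[Sec.~3.2]{VZ18} (or directly from the requirement that $\gm$ pull back to a smooth tensor on $\G\times_\K D^{l+1}$ under $\Pi$) exactly which polynomial-in-$r$ prefactors are allowed. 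The normalizations $\varphi'(0)=2$, $\xi'(L)=-2$ for $S^4$ and $\varphi'(0)=1$, $\xi'(L)=-2$ for $\C P^2$ are then forced by comparing with \eqref{eq:can-phi,psi,xi}, which one checks does satisfy all the listed conditions — a useful sanity check that also confirms the constants.
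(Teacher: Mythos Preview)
Your approach is the same as the paper's, which does not give a proof at all: it simply states that the conditions ``can be extracted from~\cite[Sec.~3.1, 3.2]{VZ18}'' and lists them. Your outline of how to carry out that extraction is therefore already more detailed than anything the paper provides.

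That said, two factual slips in your outline would derail the execution and should be corrected. First, both singular orbits here have codimension two (this is precisely the Grove--Ziller hypothesis, cf.~\Cref{subsec:GZ-metrics}), so the normal disk is $D^2$ with $l_\pm=1$, not $D^3$; the collapsing sphere $\K_\pm/\H$ is a circle. Second, condition (iii) is a local condition at $r=0$, determined entirely by the pair $(\K_-,\H)$, so the different exponent for $\C P^2$ ($r^2$ rather than $r$) cannot be attributed to $\K_+$. The group $\K_-=\S(\OO(1)\OO(2))$ is identical in both cases; what changes is $\H$, and hence the index $|(\K_-)_0\cap\H|$, which is $2$ for $S^4$ and $1$ for $\C P^2$ (as computed just before \eqref{eq:max-beta}). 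This index controls the weight of the slice representation of $(\K_-)_0$ on the normal $D^2$ relative to its $\Ad$-weight on $\mathfrak m=\n_2\oplus\n_3$, and in the framework of~\cite{VZ18} it is exactly that ratio of weights which dictates the order of vanishing in (iii). The same index, not the Fubini--Study scaling, is what forces $\varphi'(0)=2$ for $S^4$ versus $\varphi'(0)=1$ for $\C P^2$.
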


\begin{remark}\label{rem:extra-symm}
Since the isotropy groups $\K_\pm$ for the $\G$-action on $S^4$ are conjugate,  the smoothness conditions at the endpoints $r=0$ and $r=L$ can be obtained from one another by interchanging the roles of $\varphi$ and $\xi$. 
Furthermore, just as the round metric \eqref{eq:can-phi,psi,xi}, all metrics we consider on $S^4$ have the following additional symmetries:
\begin{equation}\label{eqn:gS4sym}
 \varphi(r)=\xi\left(L-r\right), \quad \text{and}\quad \psi(r)=\psi\left(L-r\right), \quad \text{ for all } 0\leq r\leq L.
\end{equation}
However, metrics on $\C P^2$ do not have any of these features or extra symmetries, as $\K_\pm$ are not conjugate, and, in general
$\varphi(r) \neq \xi\left(L-r\right)$ and $\psi(r)\neq\psi\left(L-r\right)$. 
\end{remark}

\begin{figure}[!ht]
\begin{tikzpicture}[scale=0.75]
\begin{axis}[
        axis x line=middle, 
        axis y line=middle, axis line style = thick, tick style = thick,
        ymax=2.1, xmax={pi/3+.1},
        xtick={pi/6,pi/3},
        xticklabels={$\frac\pi6$, $\frac\pi3$},
        ytick={sqrt(3)},
        yticklabels={$\sqrt3$},
        ]
    \addplot[domain=0:pi/3, red, ultra thick] {2*sin(deg(x))}; 
    \addplot[domain=0:pi/3, black!40!green, ultra thick] {sqrt(3)*cos(deg(x))+sin(deg(x))}; 
    \addplot[domain=0:pi/3, blue, ultra thick] {sqrt(3)*cos(deg(x))-sin(deg(x))}; 
    \draw (pi/4,1.55) node {$\color{red}\varphi_1$};
    \draw (pi/4,2.05) node {$\color{black!40!green}\psi_1$};
    \draw (pi/4,0.69) node {$\color{blue}\xi_1$};
\end{axis}
\end{tikzpicture}
\begin{tikzpicture}[scale=0.75]
\begin{axis}[
        axis x line=middle, 
        axis y line=middle, axis line style = thick, tick style = thick,
        ymax=2.1,   xmax={pi/4+.1},
        xtick={pi/8,pi/6,pi/4},
        xticklabels={$\frac\pi8$, $\frac{\pi}{6}$, $\frac\pi4$},
        ytick={sqrt(2)/2, 1},
        yticklabels={$\frac{\sqrt2}{2}$, $1$},
        ]
    \addplot[domain=0:pi/4, red, ultra thick] {sin(deg(x))}; 
    \addplot[domain=0:pi/4, black!40!green, ultra thick] {cos(deg(x))}; 
    \addplot[domain=0:pi/4, blue, ultra thick] {cos(2*deg(x))}; 
    \draw (pi/5,0.67) node {$\color{red}\varphi_1$};
    \draw (pi/5,0.9) node {$\color{black!40!green}\psi_1$};
    \draw (pi/5,0.42) node {$\color{blue}\xi_1$};
\end{axis}
\end{tikzpicture}
\vspace{-.2cm}
\caption{Graphs of $\varphi_1,\psi_1,\xi_1$, for $S^4$ (left) and $\C P^2$ (right).}\label{fig:canmetrics}
\vspace{-.2cm}
\end{figure}
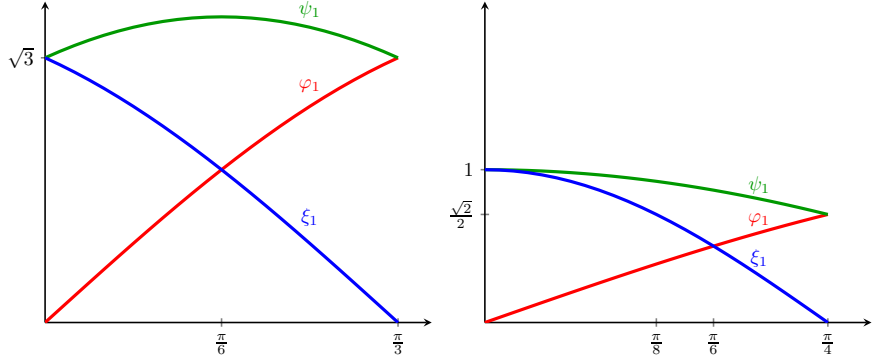

\subsection{Curvature}
Computing the curvature operator of the $\G$-invariant metric \eqref{eq:g-phi,psi,xi} on $M^4$, with the formulae in \cite[Prop.~1.12]{gz-inventiones}, one obtains the following:

\begin{proposition}\label{propn:curv_op}
Let $\{ e_i \}_{i=0}^3$ be the $\gm$-orthonormal frame along the geodesic $\gamma(r)$, $0<r<L$, given by $e_0=\gamma'(r)$, $e_1 = \frac{1}{\varphi(r)} E_{23}^*$, $e_2 = \frac{1}{\psi(r)} E_{31}^*$, $e_3 = \frac{1}{\xi(r)} E_{12}^*$, i.e., $e_0$ is the unit horizontal direction and $\{e_1,e_2,e_3\}$ are unit Killing vector fields. In the basis 
$\B:=\{ 
e_2\wedge e_3,\, e_0\wedge e_1,\, 
e_3\wedge e_1,\, e_0\wedge e_2,\,
e_1\wedge e_2,\, e_0\wedge e_3
\},$
the curvature operator $\RR\colon\wedge^2 T_{\gamma(r)}M^4 \to\wedge^2 T_{\gamma(r)}M^4$, $0<r<L$, is block diagonal, that is, $\RR = \operatorname{diag}(\RR_1, \RR_2, \RR_3)$, with $2\times 2$ blocks given as follows:
\begin{align*}
 \RR_1 &= \begin{bmatrix}
  \frac{\psi^4+\xi^4 -\varphi^4 + 2(\xi^2-\varphi^2)(\varphi^2-\psi^2)}{4\varphi^2  \psi^2\xi^2 } - \frac{\psi'\xi'}{\psi\xi }   &
  \; \frac{\psi'(\psi^2+\varphi^2-\xi^2)}{2 \varphi\psi^2\xi} + \frac{\xi'(\xi^2+\varphi^2-\psi^2)}{2\varphi\psi\xi^2} -\frac{\varphi'}{\psi\xi}   \\[5pt]
 \; \frac{\psi'(\psi^2+\varphi^2-\xi^2)}{2 \varphi\psi^2\xi} + \frac{\xi'(\xi^2+\varphi^2-\psi^2)}{2\varphi\psi\xi^2} -\frac{\varphi'}{\psi\xi} &
  -\frac{\varphi''}{\varphi}
 \end{bmatrix},\\[3pt]
 \RR_2 &= \begin{bmatrix}
  \frac{\varphi^4 + \xi^4-\psi^4 + 2(\varphi^2-\psi^2)(\psi^2-\xi^2)}{4\varphi^2 \psi^2\xi^2} - \frac{\varphi'\xi'}{\varphi \xi}   &
  \;  \frac{\varphi'(\varphi^2+\psi^2-\xi^2)}{2\varphi^2\psi \xi } + \frac{\xi'(\xi^2+\psi^2-\varphi^2)}{2\varphi \psi\xi^2} -\frac{\psi'}{\varphi \xi} \\[5pt]
  \; \frac{\varphi'(\varphi^2+\psi^2-\xi^2)}{2\varphi^2\psi \xi } + \frac{\xi'(\xi^2+\psi^2-\varphi^2)}{2\varphi \psi\xi^2} -\frac{\psi'}{\varphi \xi}  &
  -\frac{\psi''}{\psi}
 \end{bmatrix},\\[3pt]
 \RR_3 &= \begin{bmatrix}
  \frac{ \varphi^4+\psi^4-\xi^4 + 2(\psi^2-\xi^2)(\xi^2-\varphi^2)}{4\varphi^2 \psi^2\xi^2 } - \frac{\varphi'\psi'}{\varphi\psi}   &
   \frac{\varphi'(\varphi^2+\xi^2-\psi^2)}{2\varphi^2\psi\xi}+ \frac{\psi'(\psi^2+\xi^2-\varphi^2)}{2\varphi\psi^2 \xi}  -\frac{\xi'}{\varphi\psi}  \\[5pt]
    \frac{\varphi'(\varphi^2+\xi^2-\psi^2)}{2\varphi^2\psi\xi}+ \frac{\psi'(\psi^2+\xi^2-\varphi^2)}{2\varphi\psi^2 \xi}  -\frac{\xi'}{\varphi\psi}  &
  -\frac{\xi''}{\xi}
 \end{bmatrix}.
\end{align*}
\end{proposition}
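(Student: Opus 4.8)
The plan is to compute $\RR$ directly, either by specializing the general cohomogeneity one curvature formula of \cite[Prop.~1.12]{gz-inventiones} to the present situation --- $\H$ discrete, so $\n=\g=\mathfrak{so}(3)$, and $\gm_r$ diagonal in the fixed $Q$-orthonormal basis $\{E_{23},E_{31},E_{12}\}$ --- or, equivalently, by the method of moving frames, which is the computation underlying that formula. I will describe the latter. Since the formula is purely local (depending only on $\varphi,\psi,\xi$ and their derivatives), it is insensitive to whether $M^4=S^4$ or $M^4=\C P^2$, so a single computation settles both.

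Along $\gamma(r)$, the $\gm$-orthonormal coframe dual to $\{e_0,e_1,e_2,e_3\}$ is $\{\dd r,\ \varphi\,\omega^1,\ \psi\,\omega^2,\ \xi\,\omega^3\}$, where $\omega^1,\omega^2,\omega^3$ are the left-invariant $1$-forms of $\G=\SO(3)$ dual to $E_{23}^*,E_{31}^*,E_{12}^*$. The bracket relations of $\mathfrak{so}(3)$ give Maurer--Cartan equations $\dd\omega^k=-\,\omega^i\wedge\omega^j$ for $(i,j,k)$ a cyclic permutation of $(1,2,3)$ (after orienting the $\n_k$ suitably), hence, e.g., $\dd(\varphi\,\omega^1)=\tfrac{\varphi'}{\varphi}\,(\dd r)\wedge(\varphi\,\omega^1)-\tfrac{\varphi}{\psi\xi}\,(\psi\,\omega^2)\wedge(\xi\,\omega^3)$ and its cyclic analogues. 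Solving the first structure equation $\dd\epsilon^a=-\,\theta^a{}_b\wedge\epsilon^b$, $\theta^a{}_b=-\theta^b{}_a$, one finds that the ``radial'' connection forms are proportional to $\tfrac{\varphi'}{\varphi}(\varphi\,\omega^1)$, $\tfrac{\psi'}{\psi}(\psi\,\omega^2)$, $\tfrac{\xi'}{\xi}(\xi\,\omega^3)$ --- these encode the principal orbit's shape operator $\operatorname{diag}\!\big(\tfrac{\varphi'}{\varphi},\tfrac{\psi'}{\psi},\tfrac{\xi'}{\xi}\big)$ with respect to $e_0$ --- while the ``spatial'' ones are the classical Milnor coefficients of the left-invariant metric $\operatorname{diag}(\varphi^2,\psi^2,\xi^2)$ on $\SU(2)$, i.e., multiples of $\tfrac{\pm\varphi^2\pm\psi^2\pm\xi^2}{2\varphi\psi\xi}$. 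Forming the curvature $2$-forms $\Omega^a{}_b=\dd\theta^a{}_b+\theta^a{}_c\wedge\theta^c{}_b$ and reading off $\langle\RR(e_c\wedge e_d),e_a\wedge e_b\rangle$ as the coefficient of $\epsilon^c\wedge\epsilon^d$, the entries arrange themselves into three groups: $\sec(e_0,e_i)=-\varphi''/\varphi$, $-\psi''/\psi$, $-\xi''/\xi$ (radial curvatures, from $\dd\theta^0{}_i$); entries of the form $\tfrac{(\text{quartic in }\varphi,\psi,\xi)}{4\varphi^2\psi^2\xi^2}-\tfrac{\varphi'\psi'}{\varphi\psi}$, etc.\ (the Gauss equation: intrinsic curvature of the homogeneous slice computed from Milnor's formulas, minus a product of shape-operator eigenvalues); and the off-diagonal entries $\langle\RR(e_j\wedge e_k),e_0\wedge e_i\rangle$ (the Codazzi mixed terms, which come out as sums of a shape eigenvalue times a Milnor coefficient).

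It remains to verify the block-diagonal form in the basis $\B$: one checks that $\langle\RR(e_a\wedge e_b),e_c\wedge e_d\rangle=0$ unless $\{e_a\wedge e_b,\,e_c\wedge e_d\}$ is one of the pairs $\{e_j\wedge e_k,\,e_0\wedge e_i\}$ with $(i,j,k)$ cyclic. This is forced by the structure constants of $\mathfrak{so}(3)$ vanishing unless all three indices are distinct, together with the diagonality of $\gm_r$ and of the shape operator, which prevents the intrinsic, Gauss, and Codazzi terms from coupling planes in different blocks. I expect the only real difficulty to be computational bookkeeping: assembling the radial, intrinsic, and mixed contributions correctly and simplifying the raw output of $\dd\theta+\theta\wedge\theta$ into the displayed symmetric polynomials in $\varphi^2,\psi^2,\xi^2$ with consistent signs and cyclic labels. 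To guard against errors I would cross-check against \cite[Prop.~1.12]{gz-inventiones} and against the standard metrics \eqref{eq:can-phi,psi,xi} --- e.g., substituting $\varphi_1,\psi_1,\xi_1$ on $S^4$ must return $\RR\equiv\operatorname{Id}$.
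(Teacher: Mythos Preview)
Your proposal is correct and follows essentially the same approach as the paper: the paper's entire proof is the single sentence ``Computing the curvature operator \ldots\ with the formulae in \cite[Prop.~1.12]{gz-inventiones}, one obtains the following,'' and you are simply unpacking that citation, either by specializing the cohomogeneity one formula directly or by redoing the equivalent moving-frames computation that underlies it. Your remarks on the block-diagonal structure (forced by the $\mathfrak{so}(3)$ structure constants and diagonality of $\gm_r$) and the sanity check against the round metric are appropriate supplements to what is otherwise pure bookkeeping.
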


The Hodge star operator $*$ is also clearly block diagonal in the basis $\B$, namely,
\begin{equation}\label{eq:defH}
* = \operatorname{diag}(H, H, H), \quad\text{where}\quad H= \begin{bmatrix}
      0 & 1\\
      1 & 0
     \end{bmatrix}.
\end{equation}
Thus, by the Finsler--Thorpe trick (\Cref{prop:FTtrick}), such  $\RR=\diag(\RR_1,\RR_2,\RR_3)$ as in \Cref{propn:curv_op} has $\sec\geq0$, respectively $\sec>0$, if and only if there exists $\tau(r)$ such that
$\RR_i+\tau\,H\succeq0$ for $i=1,2,3$, respectively $\RR_i+\tau\,H\succ0$ for $i=1,2,3$.

\begin{remark}
Diagonal entries in  $\RR_i$ are sectional curvatures $\sec(e_i\wedge e_j)=R_{ijij}$ of coordinate planes, while off-diagonal entries are $R_{ijkl}$, with $i,j,k,l$ all distinct, so the Finsler--Thorpe trick states that $\sec\geq0$ and $\sec>0$ are respectively equivalent to the existence of $\tau$ such that all $R_{ijij}\,R_{klkl}- (R_{ijkl}+\tau)^2$ are $\geq0$ and $>0$.
\end{remark}

To illustrate the above, note that setting $\varphi,\psi,\xi$ to be the functions in \eqref{eq:can-phi,psi,xi} that correspond to the standard metrics in $S^4$ and $\C P^2$, the blocks $R_i$ become constant:
\begin{equation}\label{eq:can-R-blocks}
\begin{aligned}
   S^4&: \qquad \RR_1=\RR_2=\RR_3=\begin{bmatrix}
           1 & 0 \\
           0 & 1
          \end{bmatrix},  \\[1pt]
   \C P^2&: \qquad \RR_1 = \RR_2= \begin{bmatrix}
           1 & -1 \\
           -1 & 1
          \end{bmatrix}, \quad \RR_3 = \begin{bmatrix}
           4 & 2 \\
           2 & 4
          \end{bmatrix}.
\end{aligned}
\end{equation}
In particular, $\tau$ can be chosen constant, and 
 $R+\tau\,*\succeq0$ if and only if $\tau\in[-1,1]$ for $S^4$, and $\tau\in[0,2]$ for $\C P^2$, and $R+\tau\,*\succ0$ if and only if $\tau$ is in the open intervals.

Similarly, the curvature of a Grove--Ziller metric 
with gluing interface $\partial D(B_\pm)$ isometric to $(\G/\H,b^2Q|_\n)$ and $L=r_{\mathrm{max}}^+ +r_{\mathrm{max}}^-$ can be computed by setting $\varphi,\psi,\xi$ instead to be the functions that make \eqref{eq:g-phi,psi,xi} match with \eqref{eq:gGZcoh1form}, namely (see \Cref{fig:GZmetrics})
\begin{align}
 \varphi(r) &= \begin{cases}
 \frac{f(r)\,b \,\sqrt{a}}{\sqrt{f(r)^2 + a \rho^2}}, & \text{ if }  r\in \left(0,r_{\mathrm{max}}^- \right], \text{ where } \rho=\rho_-(b), \; f=f_-, \\ 
 b, & \text{ if } r\in \left[r_{\mathrm{max}}^- , L\right),
 \end{cases}\nonumber \\[3pt]
 \psi(r)&\equiv b,\label{eqn:f1intermsoff} \\[3pt]
 \xi(r) &= \begin{cases}
b, & \text{ if }  r\in \left(0,r_{\mathrm{max}}^- \right],\\
 \frac{f(L-r)\,b \,\sqrt{a}}{\sqrt{f(L-r)^2 + a \rho^2}},& \text{ if }r \in \left[r_{\mathrm{max}}^- , L\right), \text{ where } \rho=\rho_+(b), \; f=f_+,
 \end{cases}\nonumber
 \end{align}
as $\m=\n_2\oplus \n_3$ and $\p=\n_1$ for the disk bundle $D(B_-)$, but $\varphi$ and $\xi$ switch roles on the disk bundle $D(B_+)$, in which $\m=\n_1\oplus \n_2$ and $\p= \n_3$.
Recall that $f(r)\equiv \tfrac{\sqrt{a}\,\rho}{\sqrt{a-1}}$ for $r_0\leq r\leq r_{\mathrm{max}}$ on each of $D(B_\pm)$, so, in a neighborhood of the gluing interface $r=r_{\mathrm{max}}^-=L-r_{\mathrm{max}}^+$, the functions $\varphi=\psi=\xi$ are all constant and equal to $b$ . 

In what follows, to simplify the exposition, we shall work with $\varphi,\psi,\xi$ 
only on the interval $\left(0,r_{\mathrm{max}}^-\right]$,
which, at least on $S^4$, determines their values for all $0<r<L$ by setting $r_{\mathrm{max}}^+=r_{\mathrm{max}}^-$ and imposing the additional symmetries \eqref{eqn:gS4sym}, see \Cref{rem:extra-symm}. 

Straightforward computations using \Cref{propn:curv_op} imply the following:

\begin{proposition} \label{propn:GZRR}
The curvature operator of the Grove--Ziller metric \eqref{eq:gGZcoh1form}; i.e., the metric \eqref{eq:g-phi,psi,xi} with $\varphi,\psi,\xi$ as in \eqref{eqn:f1intermsoff}, for $r\in \left(0,r_{\mathrm{max}}^-\right]$, is
$R=\diag(R_1,R_2,R_3)$,~with:
 \begin{equation*}
 \RR_1 = \begin{bmatrix}
           \frac{4b^2 - 3\varphi^2}{4b^4} & -\frac{\varphi'}{b^2} \\
           -\frac{\varphi'}{b^2} & -\frac{\varphi''}{\varphi}
          \end{bmatrix}, \quad 
  \RR_2 = \RR_3 = \begin{bmatrix}
           \frac{\varphi^2}{4b^4} & \frac{\varphi'}{2b^2} \\
           \frac{\varphi'}{2b^2} & 0
          \end{bmatrix}.
 \end{equation*}
 In particular, $R+\tau\,*\succeq0$ if and only if $\tau=-\frac{\varphi'}{2b^2}$.
\end{proposition}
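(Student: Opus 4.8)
The plan is to obtain the displayed $R=\diag(R_1,R_2,R_3)$ by substituting \eqref{eqn:f1intermsoff} into \Cref{propn:curv_op}, and then to run the Finsler--Thorpe trick (\Cref{prop:FTtrick}) block by block. The first observation is that, on $\left(0,r_{\mathrm{max}}^-\right]$, the functions in \eqref{eqn:f1intermsoff} satisfy $\psi\equiv\xi\equiv b$, so $\psi'=\xi'=\psi''=\xi''=0$ everywhere in \Cref{propn:curv_op}, and the substitution becomes purely mechanical. For instance, the $(1,1)$-entry of $\RR_1$ has numerator $2b^4-\varphi^4+2(b^2-\varphi^2)(\varphi^2-b^2)=\varphi^2(4b^2-3\varphi^2)$, which divided by $4\varphi^2b^4$ gives $\tfrac{4b^2-3\varphi^2}{4b^4}$; the off-diagonal entry collapses to $-\tfrac{\varphi'}{b^2}$ and the $(2,2)$-entry is $-\tfrac{\varphi''}{\varphi}$. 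Since $\psi$ and $\xi$ enter the formulas for $\RR_2$ and $\RR_3$ symmetrically once both equal $b$, those two blocks coincide, with $(1,1)$-entry $\tfrac{\varphi^2}{4b^4}$, off-diagonal entry $\tfrac{\varphi'}{2b^2}$, and vanishing $(2,2)$-entry, which is the claimed form.

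For the assertion about $\tau$, recall from \eqref{eq:defH} that $R+\tau\,*\succeq0$ is equivalent to $\RR_i+\tau H\succeq0$ for $i=1,2,3$. The block
\[
\RR_2+\tau H=\RR_3+\tau H=\begin{bmatrix}\tfrac{\varphi^2}{4b^4}& \tfrac{\varphi'}{2b^2}+\tau\\[2pt]\tfrac{\varphi'}{2b^2}+\tau&0\end{bmatrix}
\]
has determinant $-\big(\tfrac{\varphi'}{2b^2}+\tau\big)^2$, so it is positive semidefinite only if $\tau=-\tfrac{\varphi'}{2b^2}$; this gives the ``only if'' direction. Conversely, with $\tau=-\tfrac{\varphi'}{2b^2}$ one has $\RR_2+\tau H=\RR_3+\tau H=\diag\big(\tfrac{\varphi^2}{4b^4},0\big)\succeq0$, so all that remains is to verify $\RR_1+\tau H\succeq0$.

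The slickest way to finish is to invoke the Grove--Ziller theorem: since \eqref{eq:gGZcoh1form} has $\sec\geq0$, \Cref{prop:FTtrick} produces, at each $r$, some $\tau$ with $R+\tau\,*\succeq0$, and by the previous paragraph that $\tau$ can only be $-\tfrac{\varphi'}{2b^2}$, whence $\RR_1-\tfrac{\varphi'}{2b^2}H\succeq0$ automatically. Alternatively, one checks $\RR_1+\tau H\succeq0$ by hand: its diagonal entries are nonnegative because $\varphi=b\sqrt a\,\tfrac{f}{\sqrt{f^2+a\rho^2}}$ is increasing in $f\geq0$ with supremum $b\sqrt a\leq\tfrac{2b}{\sqrt3}$ (so $4b^2-3\varphi^2>0$) and concave (so $\varphi''\leq0$), while the determinant condition $-(4b^2-3\varphi^2)\,\varphi''\geq 9\varphi\,(\varphi')^2$, after inserting the explicit $\varphi$ and cancelling the positive factors, reduces to the elementary inequality $3f(4-3a)(f')^2\geq\big((4-3a)f^2+4a\rho^2\big)f''$, which holds since $1<a\leq\tfrac43$ makes the left side $\geq0$ and $f''\leq0$ makes the right side $\leq0$. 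I expect the main obstacle to be precisely this verification of $\RR_1+\tau H\succeq0$ — the one step where the Grove--Ziller positivity (equivalently, the sign data $f''\leq0$ and $a\leq\tfrac43$) is genuinely used; the rest is bookkeeping, and the only real risk is an algebra slip in the substitution of Step 1 or in reducing the determinant inequality.
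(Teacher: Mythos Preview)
Your proof is correct and its backbone---the substitution into \Cref{propn:curv_op}, and the ``only if'' direction via $\det(\RR_2+\tau H)=-\big(\tfrac{\varphi'}{2b^2}+\tau\big)^2\leq0$---is exactly what the paper does in the discussion following the proposition. For the ``if'' direction (i.e., $\RR_1+\tau H\succeq0$), your option (b) is the content of the paper's \Cref{propn:eqnf1_sec>0}, just reorganized: the paper stays in the variable $\varphi$, uses \eqref{eq:f''intermsofphi} to pass from $f''\leq0$ to $(3ab^2-3\varphi^2)(-\varphi'')-9\varphi\varphi'^2\geq0$, and then invokes $a\leq\tfrac43$, whereas you unwind everything back to $f$ and land on the transparently true inequality $3(4-3a)ff'^2\geq\big((4-3a)f^2+4a\rho^2\big)f''$; these are two presentations of the same calculation. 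Your option (a)---invoking Grove--Ziller's $\sec\geq0$, then \Cref{prop:FTtrick} to get \emph{some} admissible $\tau$, then the uniqueness you already established---is a genuine shortcut the paper does not take; it is slicker and avoids the explicit inequality, at the cost of being non-constructive and relying on the black-box $\sec\geq0$ of the Grove--Ziller metric (which is fine here, since the paper is not using this proposition to \emph{prove} that fact).
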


Indeed, it is easy to verify that $\tau=-\frac{\varphi'}{2b^2}$ is the \emph{only} function $\tau(r)$, $r\in \left(0,r_{\mathrm{max}}^-\right]$, such that $R+\tau\,*\succeq0$. 
Namely, for such $r$, we have that $[\RR_i + \tau H]_{22}\equiv 0$ for both $i=2,3$, and hence $\det(\RR_2 + \tau H)=-(\frac{\varphi'}{2b^2}+\tau)^2\geq0$. This pointwise uniqueness of $\tau$ corresponds to the presence of flat planes for the Grove--Ziller metric at every point $\gamma(r)$; e.g., $\sec(e_0\wedge e_2)\equiv 0$ for all $r$.
It is interesting to observe how this (forceful) choice of $\tau$ stemming from $R_i+\tau H\succeq0$, $i=2,3$, also satisfies $R_1+\tau H\succeq0$, i.e., how the expression for $\varphi$ in \eqref{eqn:f1intermsoff} ensures $\det(\RR_1 + \tau H) = \big( \frac{4b^2 - 3\varphi^2}{4b^4}\big)\big(-\frac{\varphi''}{\varphi}\big) - \big(\frac{3\varphi'}{2b^2}\big)^2\geq0$.

\begin{lemma}\label{propn:eqnf1_sec>0}
The function $\varphi(r)$ in the Grove--Ziller metric \eqref{eq:gGZcoh1form}, given by \eqref{eqn:f1intermsoff} for $r\in \left(0,r_{\mathrm{max}}^-\right]$, satisfies $(4b^2 - 3\varphi^2)(-\varphi'') - 9\varphi\varphi'^2 \geq 0$ for all $r\in \left(0,r_{\mathrm{max}}^-\right]$.
\end{lemma}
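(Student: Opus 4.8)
The plan is to substitute the explicit formula $\varphi=b\sqrt{a}\,f/\sqrt{f^2+a\rho^2}$ from \eqref{eqn:f1intermsoff} into the left-hand side and simplify it until it is manifestly a sum of nonnegative terms. First I would record the elementary identity
\[
4b^2-3\varphi^2=\frac{b^2\big((4-3a)f^2+4a\rho^2\big)}{f^2+a\rho^2},
\]
which follows at once from $\varphi^2=ab^2f^2/(f^2+a\rho^2)$; note that this factor is in fact strictly positive, since $1<a\le\tfrac43$ gives $4-3a\ge0$ and $f\ge0$ (because $f(0)=0$ and $f$ is nondecreasing).

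Next I would compute $\varphi'$ and $\varphi''$ by the chain rule, writing $\varphi=b\sqrt{a}\,h(f)$ with $h(t)=t\,(t^2+a\rho^2)^{-1/2}$, so that $h'(t)=a\rho^2(t^2+a\rho^2)^{-3/2}$ and hence
\[
\varphi'=\frac{b\,a^{3/2}\rho^2\,f'}{(f^2+a\rho^2)^{3/2}},\qquad
-\varphi''=\frac{b\,a^{3/2}\rho^2\big(3f\,f'^2-(f^2+a\rho^2)\,f''\big)}{(f^2+a\rho^2)^{5/2}}.
\]
Substituting these three expressions into $(4b^2-3\varphi^2)(-\varphi'')-9\varphi\varphi'^2$, pulling out the positive factor $b^3a^{3/2}\rho^2(f^2+a\rho^2)^{-7/2}$, and simplifying the remaining polynomial bracket — where the $f\,f'^2$-terms combine with coefficient $12a\rho^2-9a^2\rho^2=3a\rho^2(4-3a)$, which then allows a further factor of $f^2+a\rho^2$ to be extracted — I expect to arrive at the identity
\[
(4b^2-3\varphi^2)(-\varphi'')-9\varphi\varphi'^2
=\frac{b^3a^{3/2}\rho^2}{(f^2+a\rho^2)^{5/2}}\Big[\,3(4-3a)\,f\,f'^2+\big((4-3a)f^2+4a\rho^2\big)(-f'')\,\Big].
\]

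Once this identity is in place the lemma is immediate: the prefactor is positive, and inside the brackets both summands are nonnegative, since $4-3a\ge0$, $f\ge0$, $f'^2\ge0$, and $f''\le0$ on $[0,r_{\mathrm{max}}]$ by the Grove--Ziller construction of $f$; in particular equality holds precisely where $f'=f''=0$, i.e.\ where $\varphi$ is constant. I do not anticipate any conceptual difficulty here: the only delicate point is carrying out the two differentiations of $\varphi$ and the ensuing algebra carefully, without losing the cancellation $12a\rho^2-9a^2\rho^2=3a\rho^2(4-3a)$ that produces the clean factored form above.
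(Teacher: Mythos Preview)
Your computation is correct: the identity you write down checks out term by term, and the conclusion from $4-3a\ge0$, $f\ge0$, and $f''\le0$ is immediate. The paper's proof uses the same ingredients but runs the substitution in the opposite direction: it inverts \eqref{eqn:f1intermsoff} to write $f=\varphi\rho\sqrt{a}/\sqrt{ab^2-\varphi^2}$, differentiates twice to obtain $f''$ as a positive multiple of $\varphi''(ab^2-\varphi^2)+3\varphi\varphi'^2$, and then reads off from $f''\le0$ that $(3ab^2-3\varphi^2)(-\varphi'')-9\varphi\varphi'^2\ge0$, which combined with $a\le\tfrac43$ (and the implied $\varphi''\le0$) gives the lemma. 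So the paper's route is a line or two shorter, while yours yields a sharper exact identity for the left-hand side rather than just an inequality; conceptually the two arguments are the same.
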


\begin{proof}
Solving for $f(r)$ in \eqref{eqn:f1intermsoff}, we find $f(r)= \frac{\varphi(r)\rho\sqrt{a}}{ \sqrt{ab^2 - \varphi(r)^2}}$; in particular, we have that $\varphi(r)<\sqrt{a}\,b$. Differentiating twice, it follows that:
\begin{equation}\label{eq:f''intermsofphi}
    f'' = \frac{a^{3/2}b^2\rho}{(ab^2 - \varphi^2)^{5/2}} \big( \varphi''(a b^2 - \varphi^2) + 3\varphi\varphi'^2\big).
\end{equation}
Since $f''\leq 0$, we have $\varphi''(a b^2 - \varphi^2) + 3\varphi\varphi'^2\leq0$, so $(3a b^2 - 3\varphi^2)(-\varphi'') - 9\varphi\varphi'^2 \geq 0$, which implies the desired differential inequality since $a \leq \frac{4}{3}$.
\end{proof}

\section{Positively curved metrics near Grove--Ziller metrics}\label{sec:sec>0gs}

In this section, we prove \Cref{mainthmB} in the Introduction, perturbing arbitrary Grove--Ziller metrics with $\sec\geq0$ on $S^4$ and $\C P^2$ into cohomogeneity one metrics that we show have $\sec>0$ via the Finsler--Thorpe trick (\Cref{prop:FTtrick}).

\subsection{Metric perturbation}
Let $M^4$ be either $S^4$ or $\C P^2$,  with the cohomogeneity one action of $\G=\SO(3)$ from the previous section. Given a Grove--Ziller metric $\gGZ$ on $M^4$ with gluing interface isometric to $(\G/\H,b^2 Q|_\n)$, we have
that the length of the circle(s) $\K_\pm/\H$ endowed with the metric $b^2\,Q|_{\p_\pm}$ is $\rho_\pm(b) = b/|(\K_\pm)_0\cap \H|$, where $\K_0$ is the identity component of $\K$. From the group diagrams, 
we  compute $|(\K_\pm)_0\cap \H|$ and obtain $\rho_\pm(b) = b/2$ if $M^4=S^4$, while $\rho_-(b) = b$ and $\rho_+(b) = b/2$ if $M^4=\C P^2$.
Thus, by \eqref{eq:lowerboundL}, the length $L$ of the orbit space $M/\G=[0,L]$ satisfies $L>\frac{\sqrt{a}}{\sqrt{a-1}} \,b$ if $M^4=S^4$, and $L> \frac{3\sqrt{a}}{2\sqrt{a-1}}\, b$ if $M^4=\C P^2$.
Rescaling $(M^4,\gGZ)$ so that $L=\frac\pi3$ if $M^4=S^4$, and $L = \frac\pi4$ if $M^4=\C P^2$, we obtain a Grove--Ziller metric $\gm_0$ homothetic to $\gGZ$, with standardized $L$, and whose parameters $a$ and $b$ satisfy
\begin{equation}\label{eq:max-beta}
\textstyle b< \frac\pi3\frac{\sqrt{a-1}}{\sqrt{a}} \;\text{ if }\; M^4=S^4, \quad\text{and}\quad b< \frac\pi6\frac{\sqrt{a-1}}{\sqrt{a}} \;\text{ if }\; M^4=\C P^2.
\end{equation}
Using \eqref{eq:MVT-obstruction}, it follows that $r_{\mathrm{max}}^\pm = \frac\pi6$ for $M^4=S^4$, while $r_{\mathrm{max}}^- = \frac{\pi}{6}$ and $r_{\mathrm{max}}^+ = \frac{\pi}{12}$ for $M^4=\C P^2$.
Note that $\varphi_1(r)=\xi_1(r)$ precisely at these values of $r=r^-_{\mathrm{max}}$.

Writing $\gm_0$ in the form \eqref{eq:g-phi,psi,xi} we obtain the functions $\varphi,\psi,\xi$ in \eqref{eqn:f1intermsoff}, which we decorate with the subindex $_0$, i.e., $\varphi_0,\psi_0,\xi_0$. Similarly, let $\gm_1$ be the standard metric on $M^4$, and use a subindex $_1$ to decorate the $\varphi,\psi,\xi$ given in \eqref{eq:can-phi,psi,xi}. Now, define:
\begin{equation}\label{eq:def-phis-psis-xis}
\begin{aligned}
 \varphi_s(r) &:= (1-s)\varphi_0(r) + s\,\varphi_1(r),\\
 \psi_s(r) &:=  (1-s)\psi_0(r) + s\,\psi_1(r), \qquad r\in \left[ 0, L \right],\\
  \xi_s(r) &:= (1-s)\,\xi_0(r) + s\,\xi_1(r),
\end{aligned}
\end{equation}
i.e., linearly interpolate from $\varphi_0,\psi_0,\xi_0$ to $\varphi_1,\psi_1,\xi_1$, and set $\gm_s$, $s\in[0,1]$, to be
\begin{equation}\label{eq:gs}
\gm_s := \dd r^2 + \varphi_s(r)^2 \, Q|_{\n_1} + \psi_s(r)^2 \, Q|_{\n_2} + \xi_s(r)^2 \, Q|_{\n_3}, \quad 0<r<L.
\end{equation}
The functions \eqref{eq:def-phis-psis-xis} can be visualized as affine homotopies between \Cref{fig:canmetrics,fig:GZmetrics}.

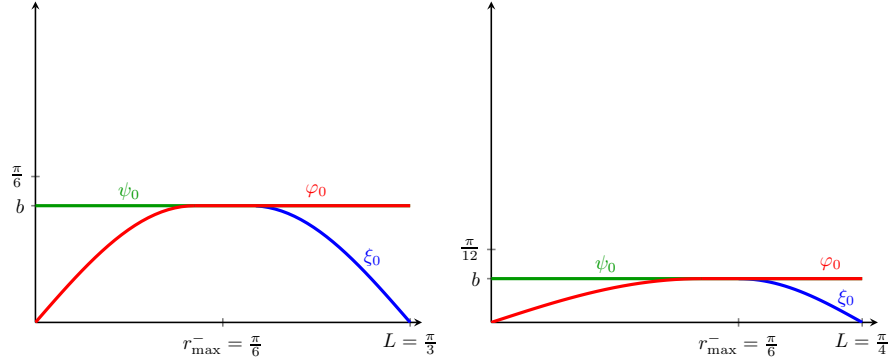
\begin{figure}[!ht]
\begin{tikzpicture}[scale=0.75]
\begin{axis}[
        axis x line=middle, 
        axis y line=middle, axis line style = thick, tick style = thick,
        ymax=2.2, xmax={pi+.1},
        xtick={pi/2,pi},
        xticklabels={$r_{\mathrm{max}}^-=\frac\pi6$, $L=\frac\pi3$},
        ytick={0.8,1},
        yticklabels={$b$,$\frac{\pi}{6}$},
        ]
    \addplot[domain=(pi/2)/.85:pi, blue, ultra thick] {0.8*sin(deg((pi-x)/.85))};
    \addplot[domain=0:pi, black!40!green, ultra thick] {0.8}; 
    \addplot[domain=0:pi/2*.85, red, ultra thick] {0.8*sin(deg(x/.85))}; 
    \addplot[domain=pi/2*.85:pi, red, ultra thick] {0.8}; 
    \draw (3*pi/4,0.9) node {$\color{red}\varphi_0$};
    \draw (pi/4,0.9) node {$\color{black!40!green}\psi_0$};
    \draw (9*pi/10,0.45) node {$\color{blue}\xi_0$};
\end{axis}
\end{tikzpicture}
\begin{tikzpicture}[scale=0.75]
\begin{axis}[
        axis x line=middle, 
        axis y line=middle, axis line style = thick, tick style = thick,
        ymax=2.2, xmax={3*pi/4+.1},
        xtick={pi/2,3*pi/4},
        xticklabels={$r_{\mathrm{max}}^-=\frac\pi6$, $L=\frac\pi4$},
        ytick={0.3,0.5},
        yticklabels={$b$,$\frac{\pi}{12}$},
        ]
    \addplot[domain=(3*pi/4-pi/2)/.5:3*pi/4, blue, ultra thick] {0.3*sin(deg((3*pi/4-x)/.5))}; 
    \addplot[domain=0:3*pi/4, black!40!green, ultra thick] {0.3}; 
    \addplot[domain=0:pi/2*.85, red, ultra thick] {0.3*sin(deg(x/.85))}; 
    \addplot[domain=pi/2*.85:3*pi/4, red, ultra thick] {.3}; 
    \draw (3*pi/4-.2,0.4) node {$\color{red}\varphi_0$};
    \draw (pi/5+.1,0.4) node {$\color{black!40!green}\psi_0$};
    \draw (3*pi/4-.1,0.17) node {$\color{blue}\xi_0$};
\end{axis}
\end{tikzpicture}
\vspace{-.2cm}
\caption{Graphs of $\varphi_0,\psi_0,\xi_0$, for $S^4$ (left) and $\C P^2$ (right), cf.~\eqref{eqn:f1intermsoff}. The  upper bound on $b$ and $r_{\mathrm{max}}^-=\frac\pi6$ follow from \eqref{eq:max-beta}.
}\label{fig:GZmetrics}
\vspace{-.1cm}
\end{figure}

It is a straightforward consequence of \Cref{prop:smoothness} that $\gm_s$ are smooth metrics:

\begin{lemma}\label{propn:gssmooth}
The $\G$-invariant metrics $\gm_s$, $s\in [0,1]$, defined on $M^4\setminus (B_-\cup B_+)$ by \eqref{eq:gs}, extend to smooth metrics on $M^4$, which we also denote by $\gm_s$, $s\in [0,1]$.
\end{lemma}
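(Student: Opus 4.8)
The plan is to verify the smoothness conditions of \Cref{prop:smoothness} directly for the interpolated functions $\varphi_s,\psi_s,\xi_s$, exploiting the fact that these conditions are essentially linear or quadratic constraints that behave well under affine combinations of functions already known to satisfy them. Since $\gm_0$ and $\gm_1$ are smooth metrics on $M^4$, the functions $\varphi_0,\psi_0,\xi_0$ (from \eqref{eqn:f1intermsoff}, suitably extended to all of $[0,L]$ via \eqref{eqn:gS4sym} on $S^4$) and $\varphi_1,\psi_1,\xi_1$ (from \eqref{eq:can-phi,psi,xi}) each satisfy conditions (i)--(vi) of \Cref{prop:smoothness} for the respective manifold; in particular each extends smoothly to $[0,L]$, hence so does any affine combination, so $\varphi_s,\psi_s,\xi_s$ are smooth on $[0,L]$ for every $s\in[0,1]$.

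Next I would check the conditions one at a time. Conditions (i) and (iv) are affine in the function and its derivatives: $\varphi(0)=0$, $\varphi'(0)=2$ (or $1$ for $\C P^2$), $\varphi^{(2n)}(0)=0$, and similarly for $\xi$ at $r=L$. Since both $\varphi_0$ and $\varphi_1$ satisfy these and $(1-s)+s=1$, the combination $\varphi_s$ satisfies them too — e.g. $\varphi_s'(0)=(1-s)\cdot 2+s\cdot 2=2$ on $S^4$, and the even derivatives vanish by linearity. Conditions (ii) and (v), of the form $\psi^2+\xi^2=\phi(r^2)$ (resp. $\psi^2+\varphi^2$) for a smooth $\phi$, are the only ones that are genuinely quadratic, so I would write $\psi_s^2+\xi_s^2$ out by expanding the square: it equals $(1-s)^2(\psi_0^2+\xi_0^2)+s^2(\psi_1^2+\xi_1^2)+2s(1-s)(\psi_0\psi_1+\xi_0\xi_1)$. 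The first two terms are smooth functions of $r^2$ by hypothesis on $\gm_0,\gm_1$; for the cross term it suffices to observe that $\psi_0\psi_1+\xi_0\xi_1$ is even in $r$ near $r=0$, which follows because each of $\psi_0,\psi_1,\xi_0,\xi_1$ is even in $r$ there (being a smooth function whose odd-order derivatives at $0$ are constrained to vanish by (i)--(iii)), and a product of even functions is even. Conditions (iii) and (vi), of the form $\psi^2-\xi^2=r\,\phi(r^2)$ (or $r^2\,\phi(r^2)$ for $\C P^2$), are handled the same way: expand, use the hypotheses on the $s=0,1$ terms, and for the cross term note $\psi_0\psi_1-\xi_0\xi_1$ vanishes to the right order at $r=0$ because each factor individually has the matching parity/vanishing behavior dictated by (ii)--(iii). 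The conditions at the $z=L-r$ endpoint are symmetric and handled identically.

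The one point requiring a little care — and the closest thing to an obstacle — is making sure the "half" of the Grove--Ziller data we actually wrote down in \eqref{eqn:f1intermsoff}, defined only on $(0,r_{\mathrm{max}}^-]$, really does extend to a function on all of $[0,L]$ satisfying \emph{both} endpoint conditions, and that the extension is compatible with the extension of $\varphi_1,\psi_1,\xi_1$. On $S^4$ this is exactly what \eqref{eqn:gS4sym} together with \Cref{rem:extra-symm} provides; on $\C P^2$ one uses the analogous (non-symmetric) extension coming from the $D(B_+)$ piece of \eqref{eqn:f1intermsoff}, and one should note that near the gluing interface all three functions are constant $\equiv b$, so there is no regularity issue in the interior. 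Once the endpoint behavior of $\varphi_0,\psi_0,\xi_0$ is pinned down, the argument is a routine verification. I would close by remarking that the positive-definiteness of $\gm_s$ (needed for it to be an honest metric, not just a symmetric tensor) holds because $\varphi_s,\psi_s,\xi_s>0$ on $(0,L)$ — each being a convex combination of two positive functions — while the vanishing at the endpoints is precisely the prescribed collapsing behavior, so \Cref{prop:smoothness} applies verbatim.
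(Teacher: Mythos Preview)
Your overall strategy matches the paper's: check conditions (i)--(vi) of \Cref{prop:smoothness} for $\varphi_s,\psi_s,\xi_s$, handle (i) and (iv) by linearity, and expand the quadratic conditions (ii)--(iii) and (v)--(vi) into pure $s=0$, pure $s=1$, and cross terms. The gap is in your treatment of the cross term. You assert that $\psi_0\psi_1+\xi_0\xi_1$ is even near $r=0$ because ``each of $\psi_0,\psi_1,\xi_0,\xi_1$ is even in $r$ there (being a smooth function whose odd-order derivatives at $0$ are constrained to vanish by (i)--(iii))''. This is false: conditions (ii)--(iii) constrain $\psi^2+\xi^2$ to be even and $\psi^2-\xi^2$ to be odd, so $\psi^2$ and $\xi^2$ are \emph{not} individually even, and neither are $\psi,\xi$. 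Concretely, on $S^4$ one has $\psi_1(r)=\sqrt{3}\cos r+\sin r$ with $\psi_1'(0)=1\neq0$. So the step as written fails.

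The paper sidesteps this entirely by using the explicit Grove--Ziller data: near $r=0$ one has $\psi_0\equiv\xi_0\equiv b$ (constant), so $\psi_s=(1-s)b+s\psi_1$ and $\xi_s=(1-s)b+s\xi_1$, and the cross term becomes $2s(1-s)b(\psi_1\pm\xi_1)$; then one simply reads off from \eqref{eq:can-phi,psi,xi} that $\psi_1+\xi_1=2\sqrt{3}\cos r$ is even and $\psi_1-\xi_1=2\sin r$ is odd. Your general expansion can be repaired without this shortcut if you replace the false evenness claim by the correct consequence of (ii)--(iii): since $\psi^2(-r)=\xi^2(r)$ formally and both are positive near $0$, one has $\xi(r)=\psi(-r)$ for each of the pairs $(\psi_0,\xi_0)$ and $(\psi_1,\xi_1)$, whence $r\mapsto\psi_0\psi_1+\xi_0\xi_1$ is even and $r\mapsto\psi_0\psi_1-\xi_0\xi_1$ is odd. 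Either route closes the gap; the paper's is shorter because it exploits that two of the four functions are literally constant.
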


\begin{proof}
For simplicity, we focus on the case $M^4=S^4$, and the case $M^4=\C P^2$ is left to the reader. The metrics $\gm_s$ are clearly smooth away from the singular orbits, which correspond to $r=0$ and $r=L$. In light of \Cref{rem:extra-symm}, it suffices to check the smoothness conditions (i)--(iii) in \Cref{prop:smoothness}, i.e., those regarding $r=0$.

First, since $\varphi_s^{(k)}(r)=(1-s)\varphi_0^{(k)}(r)+s\,\varphi_1^{(k)}(r)$ for all $k\geq0$, it is clear that $\varphi_s$ satisfies (i), as both $\varphi_0$ and $\varphi_1$ do.
Second, if $r\in \left[0,r_{\mathrm{max}}^-\right]$, then $\psi_0(r)=\xi_0(r)=b$, cf.~\eqref{eqn:f1intermsoff}, so $\psi_s(r) = (1-s) b + s\,\psi_1(r)$ and $\xi_s(r) = (1-s)b + s\,\xi_1(r)$, and thus:
 \begin{align*}
\psi_s(r)^2 + \xi_s(r)^2 &= 2(1-s)^2b^2  + 2s(1-s)b (\psi_1(r) + \xi_1(r)) + s^2 \left( \psi_1(r)^2 + \xi_1(r)^2 \right)\\
  &= 2(1-s)^2b^2 + 2s(1-s)b\,\sqrt{3}\cos r + s^2 \phi_1(r^2) = \widetilde{\phi_1}(r^2),\\
\psi_s(r)^2 - \xi_s(r)^2  &=  2s(1-s)b \, (\psi_1(r) - \xi_1(r)) + s^2 \left( \psi_1(r)^2 - \xi_1(r)^2 \right) \\
  &=  2s(1-s)b\,(-2\sin r) + s^2 \,r\,\phi_2(r^2) = r\,\widetilde{\phi_2}(r^2),
 \end{align*}
 where $\widetilde{\phi_k}$, $k=1,2$, are smooth functions, hence (ii) and (iii) are also satisfied. 
\end{proof}

Let us introduce functions $\Delta_\varphi,\Delta_\psi,\Delta_\xi$ of $r$ so that \eqref{eq:def-phis-psis-xis} can be written as
\begin{equation}\label{eqn:Deltai}
\varphi_s=\varphi_0+s\,\Delta_\varphi, \quad \psi_s=\psi_0+s\,\Delta_\psi, \quad \xi_s=\xi_0+s\,\Delta_\xi,
\end{equation}
i.e., $\Delta_\varphi(r) := \varphi_1(r)-\varphi_0(r)$, and similarly for $\Delta_\psi$ and $\Delta_\xi$. Note that each of these functions is smooth up to $r=0$ and $r=L$; in particular, bounded on $[0,L]$. In the sequel, we take the point of view \eqref{eqn:Deltai} that $\varphi_s,\psi_s,\xi_s$ are perturbations of $\varphi_0,\psi_0,\xi_0$.

\subsection{Regularity of perturbation}
By \eqref{eq:gs}, \Cref{propn:gssmooth}, and \Cref{propn:curv_op}, each entry of the curvature operator matrix $R_s$ of $\gm_s$ along $\gamma(r)$ is a smooth function
\begin{equation}\label{eq:generic-entry}
 \frac{ P(\varphi_s,\, \psi_s,\,\xi_s,\, \varphi'_s,\, \psi'_s,\, \xi'_s,\, \varphi''_s,\, \psi''_s, \,\xi''_s)}{\varphi_s^2\,\psi_s^2\,\xi_s^2},
\end{equation}
where $P$ is a polynomial. 
Note that the $\gm_s$-orthonormal basis on which the matrix $R_s$ is being written varies smoothly with $s$. 
The singularities in \eqref{eq:generic-entry} at $r=0$ and $r=L$, due to $\varphi_s(0)=0$ and $\xi_s(L)=0$, are removable as a consequence of \Cref{propn:gssmooth}. This corresponds to the fact that also $P$ vanishes to the appropriate order because $\varphi_s,\psi_s,\xi_s$ 
satisfy the required smoothness conditions. 
Moreover, these smoothness conditions imply that \eqref{eq:generic-entry} equals
\begin{equation}\label{eq:generic-entry2}
 \frac{ P(\varphi_s,\, \psi_s,\,\xi_s,\, \varphi'_s,\, \psi'_s,\, \xi'_s,\, \varphi''_s,\, \psi''_s, \,\xi''_s)}{\varphi_0^2\,\psi_0^2\,\xi_0^2}+Q(s,r)\,s,
\end{equation}
where $Q$ is continuous. 
Furthermore, by \eqref{eqn:Deltai}, the numerator above
 can be written as a polynomial $\widetilde P$ in the parameter $s$, the functions $\varphi_0,\psi_0,\xi_0$ and their first and second derivatives, and the functions $\Delta_\varphi,\Delta_\psi,\Delta_\xi$ and their first and second derivatives (indicated as $\dots$ below). Thus, \eqref{eq:generic-entry2} and hence \eqref{eq:generic-entry} are equal~to
\begin{equation}\label{eq:generic-entry3}
 \frac{ \widetilde P(s,\, \varphi_0,\, \psi_0,\,\xi_0,\dots, \Delta_\varphi, \, \Delta_\psi,\, \Delta_\xi, \dots)}{\varphi_0^2\,\psi_0^2\,\xi_0^2}+Q(s,r)\,s.
\end{equation}
In particular, the dependence of the above on $s$ is polynomial in the first term, and smooth on the second. Expanding in $s$, we~have
\begin{equation*}
    \widetilde P(s,\, \varphi_0, \psi_0,\xi_0,\dots, \Delta_\varphi, \Delta_\psi, \Delta_\xi, \dots)=\sum_{n=0}^d \widetilde{P}_n(\varphi_0, \psi_0,\xi_0,\dots, \Delta_\varphi, \Delta_\psi,\Delta_\xi, \dots)\,s^n,
\end{equation*}
where $\widetilde{P}_n$ are polynomials. Each coefficient in this sum is a smooth function of $r$ that vanishes at $r=0$ and $r=L$ in such way that the limits of \eqref{eq:generic-entry3} as $r\searrow0$ and $r\nearrow L$ are both finite, 
so the corresponding coefficients in \eqref{eq:generic-entry3}
extend to smooth (hence bounded) functions on $[0,L]$.
Thus, $ \widetilde P(s,\, \varphi_0,\, \psi_0,\,\xi_0,\dots, \Delta_\varphi, \, \Delta_\psi,\, \Delta_\xi, \dots)/\varphi_0^2\,\psi_0^2\,\xi_0^2$ can be regarded as a polynomial in the variable $s$ whose coefficients are \emph{continuous} functions of $r$. We will implicitly (and repeatedly) use this fact in what follows.

\begin{notation*}
    We use $O(s^n)$, respectively $O(r^m)$, to denote any functions of the form $s^n\, F(s,r)$, respectively $r^m\, F(s,r)$, where $F\colon [0,1]\times[0, L]\to\R$ is \emph{bounded}.
\end{notation*}

\subsection{\texorpdfstring{Positive curvature on $S^4$}{Positive curvature on the four-sphere}} 
To simplify the exposition, we shall focus primarily on the case $M^4=S^4$, in which $r_{\mathrm{max}}^\pm=\frac{L}{2}=\frac\pi6$ and it suffices to verify $\sec>0$ along the geodesic segment $\gamma(r)$ with $r\in \left[0, r_{\mathrm{max}}^- \right]$ due to the additional additional symmetries \eqref{eqn:gS4sym}, cf.~\Cref{rem:extra-symm}. 

Let $R_s=\diag\!\big( (R_s)_1,(R_s)_2,(R_s)_3 \big)$ be the curvature operator of $(S^4,\gm_s)$ along $\gamma(r)$, given by \Cref{propn:curv_op}, where $\varphi,\psi,\xi$ are set to be $\varphi_s,\psi_s,\xi_s$ defined in \eqref{eq:def-phis-psis-xis}.
As discussed above, $R_s$, $s\in [0,1]$, extends smoothly to $r=0$, and this extension (as well as its entries) will be denoted by the same symbol(s).
Clearly, $R_0$ is the curvature operator of the Grove--Ziller metric $\gm_0$,
so $R_0+\tau_0\,*\succeq0$ for all $r\in \left[0,r_{\mathrm{max}}^-\right]$, where $\tau_0 := -\frac{\varphi_0'}{2b^2}$, see~\Cref{propn:GZRR}. The proof of \Cref{mainthmB} hinges on the next:

\begin{claim}\label{claim-S4}
If $s>0$ is sufficiently small, then $R_s+\tau_s\,*\succ0$ for all $r\in \left[0,r_{\mathrm{max}}^-\right]$,~with
\begin{equation}\label{eqn:thorpe_s}
 \tau_s(r) := \tau_0(r)+  \frac{2(\sqrt{3}-b)}{b^3}\,s=  -\frac{\varphi_0'(r)}{2b^2} +  \frac{2(\sqrt{3}-b)}{b^3}\,s.
\end{equation}
\end{claim}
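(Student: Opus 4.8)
The plan is to apply the Finsler--Thorpe trick (\Cref{prop:FTtrick}) blockwise. Since $R_s+\tau_s\,*=\diag\!\big((R_s)_1+\tau_s H,\,(R_s)_2+\tau_s H,\,(R_s)_3+\tau_s H\big)$ by \Cref{propn:curv_op} and \eqref{eq:defH}, it is enough to show that, for $s>0$ small and every $r\in\left[0,r_{\mathrm{max}}^-\right]$, each symmetric $2\times2$ matrix $(R_s)_i+\tau_s H$ has positive $(1,1)$-entry and positive determinant. I would use throughout that, by the discussion in \Cref{sec:sec>0gs}, every entry of $(R_s)_i$ and every determinant $\det\!\big((R_s)_i+\tau_s H\big)$ is a polynomial in $s$ whose coefficients are continuous (hence bounded) functions of $r\in[0,L]$: the $s^0$-coefficient is the corresponding quantity for the Grove--Ziller metric $\gm_0$ (from \Cref{propn:GZRR}), and the $s^1$-coefficient is its $s$-derivative at $s=0$. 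The proof then reduces to combining the sign of the $s=0$ data with that of the $s$-linearization exactly at the points where the former degenerates.

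For the first block I would start from $[(R_0)_1]_{11}=\tfrac{4b^2-3\varphi_0^2}{4b^4}\ge\tfrac1{4b^2}$ (using $\varphi_0\le b$), so that the $(1,1)$-entry of $(R_s)_1+\tau_s H$ stays positive for small $s$. Next, $\det\!\big((R_0)_1+\tau_0 H\big)\ge0$ by \Cref{propn:eqnf1_sec>0}, and I would argue that the chain of inequalities in its proof forces this determinant to vanish precisely on the plateau $[r_0,r_{\mathrm{max}}^-]$, where $\varphi_0\equiv b$, $\varphi_0'=\varphi_0''=0$ and hence $(R_0)_1+\tau_0 H=\diag\!\big(\tfrac1{4b^2},0\big)$; in particular it is strictly positive at $r=0$, where it equals $\tfrac1{b^2}\kappa_0-\tfrac9{b^4}$ with $\kappa_0:=\lim_{r\to0}\bigl(-\tfrac{\varphi_0''}{\varphi_0}\bigr)=\sec_{\gm_{D^2}}(0)+\tfrac{12}{ab^2}>\tfrac{12}{ab^2}\ge\tfrac9{b^2}$, using $\sec_{\gm_{D^2}}(0)>0$ from \Cref{rem:psec0r1]} and $a\le\tfrac43$. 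Along the kernel direction $e_0\wedge e_1$ on the plateau, the $s$-derivative of the $(2,2)$-entry is $\partial_s\big|_{s=0}\!\big(\!-\tfrac{\varphi_s''}{\varphi_s}\big)=\tfrac{2\sin r}{b}>0$ (using $\varphi_0\equiv b$, $\varphi_1=2\sin r$ there). A standard compactness argument --- if $A(r)\succeq0$ is continuous on a compact interval and $\langle B(r)v,v\rangle>0$ for every $0\ne v\in\ker A(r)$, then $A(r)+sB(r)+O(s^2)\succ0$ uniformly in $r$ for small $s$ --- then gives $(R_s)_1+\tau_s H\succ0$.

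The second and third blocks coincide for $\gm_0$ and are interchanged by $\psi\leftrightarrow\xi$ (with $\psi_1''=-\psi_1$, $\xi_1''=-\xi_1$, $\psi_1(0)=\xi_1(0)=\sqrt3$), so it suffices to treat $(R_s)_2+\tau_s H$, the argument for $(R_s)_3+\tau_s H$ being identical (with $b\le\xi_1$ in place of $b\le\psi_1$). Since $\psi_0\equiv b$ on $\left[0,r_{\mathrm{max}}^-\right]$, one has $\psi_s''=-s\,\psi_1$, hence $[(R_s)_2]_{22}=-\tfrac{\psi_s''}{\psi_s}=\tfrac{s\,\psi_1}{(1-s)b+s\,\psi_1}\ge s$, using $b\le\psi_1$ on $\left[0,\tfrac\pi6\right]$. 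For the $(1,1)$-entry, $[(R_0)_2]_{11}=\tfrac{\varphi_0^2}{4b^4}$ is positive for $r>0$, and I would compute, by expanding \Cref{propn:curv_op} near $r=0$ (clearing the removable singularity at $\varphi_0(0)=0$), that $[(R_s)_2]_{11}(0)=\tfrac{\sqrt3\,s}{(1-s)b+s\sqrt3}$, whose $s^1$-coefficient $\tfrac{\sqrt3}{b}$ is positive; a two-region estimate ($r\ge\delta$ versus $r\le\delta$) then yields $[(R_s)_2]_{11}>0$ on all of $\left[0,r_{\mathrm{max}}^-\right]$ for small $s$. The off-diagonal entry $[(R_0)_2+\tau_0 H]_{12}=\tfrac{\varphi_0'}{2b^2}+\tau_0$ vanishes identically, and the decisive point --- which is exactly what dictates the correction term in \eqref{eqn:thorpe_s} --- is that the same expansion gives
\[
[(R_s)_2]_{12}(0)=\frac{1-s^2}{\big((1-s)b+s\sqrt3\big)^2}=\frac1{b^2}-\frac{2(\sqrt3-b)}{b^3}\,s+O(s^2),
\]
so that $[(R_s)_2+\tau_s H]_{12}(0)=O(s^2)$; writing $[(R_s)_2+\tau_s H]_{12}=s\,B_2(r)+O(s^2)$, we get $B_2(0)=0$. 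With these facts,
\[
\det\!\big((R_s)_2+\tau_s H\big)=[(R_s)_2]_{11}\,[(R_s)_2]_{22}-\big([(R_s)_2+\tau_s H]_{12}\big)^2\ \ge\ s\,[(R_s)_2]_{11}-s^2B_2(r)^2-O(s^3),
\]
and splitting $\left[0,r_{\mathrm{max}}^-\right]$ into $\{r\ge\delta\}$, where $[(R_s)_2]_{11}$ is bounded below by a positive constant, and $\{r\le\delta\}$ with $\delta$ so small that there $[(R_s)_2]_{11}\ge\tfrac{\sqrt3}{2b}s-O(s^2)$ and $B_2(r)^2<\tfrac{\sqrt3}{4b}$, makes the right-hand side positive for small $s>0$. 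Together with $[(R_s)_2]_{11}>0$ this gives $(R_s)_2+\tau_s H\succ0$, and likewise $(R_s)_3+\tau_s H\succ0$, completing the proof.

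The step I expect to be the main obstacle is the expansion of the entries of \Cref{propn:curv_op} near $r=0$: it must be carried out simultaneously to leading order as $r\to0$ (to cancel the $\tfrac1r$ singularities produced by $\varphi_0(0)=0$) and to first order in $s$, and it is this double expansion that produces the two identities $[(R_s)_2]_{11}(0)=[(R_s)_3]_{11}(0)=\tfrac{\sqrt3\,s}{(1-s)b+s\sqrt3}$ and $[(R_s)_2]_{12}(0)=[(R_s)_3]_{12}(0)=\tfrac{1-s^2}{((1-s)b+s\sqrt3)^2}$ on which everything hinges --- the latter singling out $\tfrac{2(\sqrt3-b)}{b^3}$ as the unique first-order correction to $\tau_0$ that makes the off-diagonal entry of the $\tau_s$-shifted $\psi$- and $\xi$-blocks vanish to second order in $s$ at the singular orbit.
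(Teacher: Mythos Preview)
Your proposal is correct and follows the same overall architecture as the paper's proof: block-diagonal reduction via \Cref{propn:curv_op} and \eqref{eq:defH}, Sylvester's criterion on each $2\times2$ block, and first-order expansion in $s$ combined with a two-region argument in $r$. The key identifications you make --- that the $s^1$-coefficient of the modified off-diagonal entry of blocks $i=2,3$ vanishes at $r=0$ (forcing the constant $\tfrac{2(\sqrt3-b)}{b^3}$), and that on the plateau the kernel of $(R_0)_1+\tau_0H$ is the $e_0\wedge e_1$ direction with positive first variation --- coincide with what the paper extracts in \Cref{propn:Rs1-new,propn:Rs23-new}.

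The one genuine difference worth recording is your treatment of block~$1$. The paper (\Cref{prop:Rs1-positive}) chooses an explicit threshold $r_*<r_0$ via the smallness parameter $\varepsilon$ in \eqref{eqn:f1epsilon}, and separately bounds the $s^0$-coefficient $A(r)$ from below on $[0,r_*]$ (using \Cref{rem:psec0r1]}) and the $s^1$-coefficient $B(r)$ from below on $[r_*,r_{\mathrm{max}}^-]$ (via concrete constants). You instead invoke an abstract perturbation lemma: $(R_0)_1+\tau_0H\succeq0$ with trivial kernel off the plateau and one-dimensional kernel on the plateau, where the first variation is positive, hence positive-definiteness for small $s$ by compactness. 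Your route is cleaner and avoids the bookkeeping with $\varepsilon$, $C_1$, $C_2$; the paper's route has the advantage of being fully explicit. Both are valid. Note that your compactness lemma, while standard, does require the hypothesis to hold at \emph{every} $r$ where $\ker A(r)\neq0$, so you are implicitly using that $A(r)\succ0$ strictly on all of $[0,r_0)$ --- this follows from the chain of inequalities in \Cref{propn:eqnf1_sec>0} together with $f''<0$ on $(0,r_0)$ and your computation at $r=0$, as you say.

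A minor remark: the exact closed-form expressions you state for $[(R_s)_2]_{11}(0)$ and $[(R_s)_2]_{12}(0)$ are more than you need, and verifying them requires a careful double limit. Your argument only uses their $s^1$-coefficients ($\tfrac{\sqrt3}{b}$ and $-\tfrac{2(\sqrt3-b)}{b^3}$, respectively), which is exactly what \Cref{propn:Rs23-new} provides; you could safely weaken those claims to first order in $s$.
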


We begin the journey towards \Cref{claim-S4} observing that certain diagonal entries of $R_s$, which are sectional curvatures with respect to $\gm_s$, are positive for all $s\in(0,1]$.

\begin{proposition} \label{propn:sec>0_1234}
 For all $s\in(0,1]$ and $r\in \left[0, r_{\mathrm{max}}^-\right]$, the following hold:
\begin{enumerate}[\rm (i)]
    \item $[(R_s)_i]_{22} = \sec_{\gm_s}(e_0\wedge e_i) > 0$ for $1 \leq i \leq 3$;
    \item $[(R_s)_1]_{11} =\sec_{\gm_s}(e_2 \wedge e_3)>0$.
\end{enumerate} 
\end{proposition}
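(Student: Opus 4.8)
The plan is to verify each of the two claimed inequalities directly from the explicit matrix entries in \Cref{propn:curv_op}, exploiting that on $\left[0,r_{\mathrm{max}}^-\right]$ the Grove--Ziller data simplify dramatically: by \eqref{eqn:f1intermsoff} we have $\psi_0\equiv\xi_0\equiv b$ there, so the perturbed functions are $\psi_s = (1-s)b + s\,\psi_1$ and $\xi_s = (1-s)b + s\,\xi_1$, while $\varphi_s = (1-s)\varphi_0 + s\,\varphi_1$ with $\varphi_0$ concave nondecreasing and $\varphi_1(r)=2\sin r$. The key qualitative facts I would record first are: $\varphi_0,\varphi_1 > 0$ on $(0,r_{\mathrm{max}}^-]$ with $\varphi_0'(0)=\varphi_1'(0)=2$; $\psi_1(r)=\sqrt3\cos r+\sin r$ and $\xi_1(r)=\sqrt3\cos r-\sin r$ are both positive on $\left[0,\frac\pi6\right]$ with $\psi_1 > \xi_1$; and on $\left[0,\frac\pi6\right]$ one has $\varphi_1 = \psi_1 - \xi_1 \le \xi_1 \le \psi_1$, so in particular $\varphi_s \le \xi_s \le \psi_s$ pointwise for every $s\in[0,1]$ by affine interpolation (since the same inequalities hold for the index-$0$ functions, where $\varphi_0\le b = \xi_0 = \psi_0$). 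Positivity of $\varphi_s,\psi_s,\xi_s$ on $(0,r_{\mathrm{max}}^-]$ (and the right behavior as $r\to 0$) is immediate from \Cref{propn:gssmooth} and these bounds.

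For part (ii), the entry $[(R_s)_1]_{11}$ from \Cref{propn:curv_op} is
\[
\frac{\psi_s^4+\xi_s^4 - \varphi_s^4 + 2(\xi_s^2-\varphi_s^2)(\varphi_s^2-\psi_s^2)}{4\varphi_s^2\psi_s^2\xi_s^2} - \frac{\psi_s'\xi_s'}{\psi_s\xi_s}.
\]
I would first rewrite the numerator of the first term: it is a symmetric-function expression in $\varphi_s^2,\psi_s^2,\xi_s^2$ that equals $2\psi_s^2\xi_s^2 + 2\varphi_s^2(\psi_s^2+\xi_s^2) - \varphi_s^4 - \psi_s^4 - \xi_s^4 + \cdots$ — more to the point, it is (up to sign) the standard Heron-type combination, and since $\varphi_s\le\xi_s\le\psi_s$ with $\varphi_s < \psi_s + \xi_s$ (strict, since $\varphi_s\le\xi_s<\psi_s+\xi_s$), this numerator is $\ge 0$, in fact $>0$ on the interior. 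The second term $-\psi_s'\xi_s'/(\psi_s\xi_s)$ is handled by noting $\psi_s' = (1-s)\cdot 0 + s\,\psi_1' = s(-\sqrt3\sin r+\cos r)$ and $\xi_s' = s(-\sqrt3\sin r - \sin r\cdot(-1)) = s(-\sqrt3\sin r - \cos r)$; wait — more carefully $\xi_1'(r) = -\sqrt3\sin r - \cos r$, so on $\left[0,\frac\pi6\right]$ we have $\xi_s'\le 0$, while $\psi_s'$ changes sign. Thus $-\psi_s'\xi_s'/(\psi_s\xi_s)$ need not have a sign, so I cannot simply add two nonnegative terms. Instead I would combine over the common denominator $4\varphi_s^2\psi_s^2\xi_s^2$ and show the full numerator is positive; alternatively, and more cleanly, invoke the Grove--Ziller baseline: at $s=0$ one has $[(R_0)_1]_{11} = (4b^2-3\varphi_0^2)/(4b^4) > 0$ by \Cref{propn:GZRR} and $\varphi_0 < \sqrt a\, b \le \frac{2}{\sqrt3}b$, hence $4b^2 - 3\varphi_0^2 > 0$; then argue that the $s$-derivative contributions keep it positive on the compact interval for all $s\in(0,1]$ by a direct sign analysis of the combined numerator. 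I expect the combined-numerator computation to be the main obstacle, since it is a genuine polynomial inequality in $r$ and $s$ rather than a sum of manifestly nonnegative pieces; the saving grace is that $\psi_0,\xi_0$ are \emph{constant}, which collapses many cross terms.

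For part (i), I would treat the three diagonal entries $[(R_s)_i]_{22}$ together since they are respectively $-\varphi_s''/\varphi_s$, $-\psi_s''/\psi_s$, $-\xi_s''/\xi_s$. Positivity reduces to $\varphi_s'' < 0$, $\psi_s'' < 0$, $\xi_s'' < 0$ on $\left[0,r_{\mathrm{max}}^-\right]$ (using $\varphi_s,\psi_s,\xi_s>0$ there, from the first paragraph). Now $\varphi_s'' = (1-s)\varphi_0'' + s\,\varphi_1''$; by the Grove--Ziller construction $\varphi_0'' = f_-'' \cdot(\text{something}) \le 0$ — in fact $\varphi_0'' \le 0$ follows from \eqref{eq:f''intermsofphi} and $f''\le 0$ after the sign bookkeeping in \Cref{propn:eqnf1_sec>0}, and $\varphi_1''(r) = -2\sin r \le 0$ on $\left[0,\frac\pi6\right]$, with strict negativity somewhere; I must be slightly careful near $r=0$ where $\varphi_1''(0)=0$ and $\varphi_0''(0)=0$, but there the smoothness conditions force the quotient $-\varphi_s''/\varphi_s$ to extend continuously with a positive limit (indeed for the round metric $-\varphi_1''/\varphi_1 \equiv 1$). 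For $\psi_s'' = s\,\psi_1'' = s(-\sqrt3\cos r - \sin r) < 0$ on $\left[0,\frac\pi6\right]$ and for $\xi_s'' = s\,\xi_1'' = s(-\sqrt3\cos r + \sin r)$, which is $< 0$ on $\left[0,\frac\pi6\right]$ precisely because $\sqrt3\cos r > \sin r$ there (as $\tan r \le \tan\frac\pi6 = 1/\sqrt3 < \sqrt3$). Since $s > 0$, each of $\psi_s''$, $\xi_s''$ is strictly negative on $\left[0,r_{\mathrm{max}}^-\right]$, giving $[(R_s)_2]_{22}, [(R_s)_3]_{22} > 0$; and $\varphi_s'' = (1-s)\varphi_0'' + s\varphi_1'' \le 0$ with strict negativity away from $r=0$ (and the correct positive limit as $r\to 0$), giving $[(R_s)_1]_{22} > 0$. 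The only delicate point in part (i) is the behavior at the endpoint $r=0$, which I would dispatch using the explicit asymptotics from \Cref{prop:smoothness}(i) and the paragraph on regularity of the perturbation, exactly as the round metric is handled.
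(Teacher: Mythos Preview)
Your treatment of part (i) is essentially the paper's proof: reduce $[(R_s)_i]_{22}>0$ to strict concavity of $\varphi_s,\psi_s,\xi_s$, which follows because $\varphi_0''\le 0$, $\psi_0''=\xi_0''=0$, and $\varphi_1'',\psi_1'',\xi_1''<0$ on $\left[0,\tfrac\pi6\right]$; the endpoint $r=0$ for $i=1$ is then handled by the smooth extension, and the paper makes this explicit by observing that $\lim_{r\searrow 0}\sec_{\gm_s}(e_0\wedge e_1)=(1-s)\sec_{\gm_0}(e_0\wedge e_1)(0)+s\cdot 1>0$.

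For part (ii), however, you talk yourself out of the simple argument. You correctly compute $\psi_s'=s\,\psi_1'=s(\cos r-\sqrt3\sin r)$ but then assert that ``$\psi_s'$ changes sign''. It does not on the interval at hand: $\cos r-\sqrt3\sin r\ge 0$ precisely when $\tan r\le 1/\sqrt3$, i.e.\ for all $r\in\left[0,r_{\mathrm{max}}^-\right]=\left[0,\tfrac\pi6\right]$. Hence $\psi_s'\ge 0$ and $\xi_s'=s(-\sqrt3\sin r-\cos r)<0$, so $-\psi_s'\xi_s'/(\psi_s\xi_s)\ge 0$. This is exactly the sign observation the paper uses. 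Combined with your (correct) remark that the first term is nonnegative under $\varphi_s\le\xi_s\le\psi_s$, you are done; in fact the paper writes the first term as
\[
\frac{(\xi_s^2-\psi_s^2)^2}{4\varphi_s^2\psi_s^2\xi_s^2}+\frac{2\psi_s^2-\varphi_s^2}{4\psi_s^2\xi_s^2}+\frac{\xi_s^2-\varphi_s^2}{2\psi_s^2\xi_s^2},
\]
three manifestly nonnegative pieces, the middle one bounded below by $b^2/(4\psi_s^2\xi_s^2)>0$ since $2\psi_s^2-\varphi_s^2\ge\psi_s^2\ge b^2$. So the ``main obstacle'' you anticipate---a combined-numerator polynomial inequality or a perturbation-from-$s=0$ argument---is not needed; the expression \emph{is} a sum of nonnegative terms once you fix the sign error on $\psi_s'$.
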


\begin{proof}
As the round metric $\gm_1$ has $\sec \equiv1$, we have $\varphi_1''(r) <0$, $\psi_1''(r) <0$, $\xi_1''(r) <0$ by \Cref{propn:curv_op}, cf.~\eqref{eq:can-phi,psi,xi} and \eqref{eq:can-R-blocks}.
Thus $\varphi_s''(r) <0$, $\psi_s''(r) <0$, $\xi_s''(r) <0$ for all $s\in(0,1]$ and $r\in\left[0,r_{\mathrm{max}}^-\right]$, which implies, by \Cref{propn:curv_op}, that $\sec_{\gm_s}(e_0\wedge e_i) > 0$, for $i=2,3$. 
In the case of $\sec_{\gm_s}(e_0\wedge e_1)$, a further argument is required at $r=0$. Namely, using the smoothness conditions, we see that if $s\in (0,1]$, then
\begin{equation*}
\lim_{r\searrow 0} \sec_{\gm_s}(e_0\wedge e_1)(r) = (1-s)\sec_{\gm_0}(e_0\wedge e_1)(0) + s\sec_{\gm_1}(e_0\wedge e_1)(0) >0,
\end{equation*}
where $(e_0\wedge e_1)(r)$ denotes the $2$-plane in $T_{\gamma(r)}S^4$ spanned by $e_0$ and $e_1$, which concludes the proof of (i).
Regarding (ii), if $s\in (0,1]$ and $r \in \left(0, r_{\mathrm{max}}^-\right]$, then
 \begin{equation*}
 \varphi_s \leq \xi_s < \psi_s, \quad \xi_s' < 0, \quad \psi_s' \geq 0,
 \end{equation*}
 which implies that
  \begin{align*}
\sec_{\gm_s}(e_2\wedge e_3) &= \frac{\psi_s^4+\xi_s^4-\varphi_s^4 + 2(\xi_s^2-\varphi_s^2)(\varphi_s^2-\psi_s^2)}{4\,\varphi_s^2\, \psi_s^2\,\xi_s^2 } - \frac{\psi_s'\xi_s'}{\psi_s\xi_s} \\
&= \frac{(\xi_s^2 - \psi_s^2)^2}{4\,\varphi_s^2\,\psi_s^2 \,\xi_s^2} +\frac{2\psi_s^2 - \varphi_s^2}{4\,\psi_s^2 \,\xi_s^2} +\frac{\xi_s^2-\varphi_s^2}{2\,\psi_s^2 \,\xi_s^2} - \frac{\psi_s'\xi_s'}{\psi_s\xi_s} \geq\frac{b^2}{4\psi_s^2\,\xi_s^2},
  \end{align*}
since $2\psi_s^2-\varphi_s^2\geq \psi_s^2$ and $\psi_s\geq \psi_0\equiv b$ is uniformly bounded from below.
\end{proof}

Let us introduce functions $\eta_i,\mu_i,\nu_i$, $i=1,2,3$, such that the blocks of the curvature operator $\RR_s=\diag\!\big((\RR_s)_1,(\RR_s)_2,(\RR_s)_3\big)$ of $\gm_s$ can be written as a perturbation
 \begin{equation}\label{eq:def-etai-mui-nui}
 \phantom{, \qquad i = 1,2,3,}
  (\RR_s)_i = (\RR_0)_i +  \begin{bmatrix}
    \eta_i(s,r) & \mu_i(s,r)\\
    \mu_i(s,r) & \nu_i(s,r)
  \end{bmatrix}, \qquad i = 1,2,3,
 \end{equation}
 of the blocks of the curvature operator $\RR_0=\diag\!\big((\RR_0)_1,(\RR_0)_2,(\RR_0)_3\big)$ of the Grove--Ziller metric $\gm_0$. Recall that, for $r\in\left(0,r_{\mathrm{max}}^-\right]$, these blocks $(\RR_0)_i$ are computed in \Cref{propn:GZRR}, setting $\varphi=\varphi_0$, i.e., $\varphi$ is given by \eqref{eqn:f1intermsoff}.
Clearly, each of $\eta_i,\mu_i,\nu_i$ is $O(s^n)$ for some $n\geq1$.

\subsubsection{First block}
We first analyze the block $i=1$ of the matrices $R_s$ and $R_s+\tau_s\,*$. 

\begin{proposition}\label{propn:Rs1-new}
 For all $r\in \left[0, r_{\mathrm{max}}^-\right]$, the entries of $(\RR_s)_1$ satisfy:
  \begin{align*}
   \eta_1(s, r) &=  \left(  \frac{3\varphi_0}{2b^5} (\varphi_0 (\Delta_\psi + \Delta_\xi)-b\Delta_\varphi ) - \frac{\Delta_\psi + \Delta_\xi}{b^3} \right)s + O(s^2),\\
   \mu_1(s,r) &= \left( \frac{\varphi_0(\psi_1' + \xi_1')}{2b^3} - \frac{\Delta_\varphi'}{b^2} + \frac{\varphi_0'}{b^3}(\Delta_\psi + \Delta_\xi) \right)s + O(s^2),\\
   \nu_1(s,r) &=  \left( \frac{-\varphi_1''\varphi_0 + \varphi_0''\varphi_1}{\varphi_0^2} \right)s + O(s^2).
  \end{align*}
 \end{proposition}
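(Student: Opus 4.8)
The plan is to substitute $\varphi_s=\varphi_0+s\,\Delta_\varphi$, $\psi_s=\psi_0+s\,\Delta_\psi$, $\xi_s=\xi_0+s\,\Delta_\xi$ (see \eqref{eqn:Deltai}) into the entries of the block $(\RR_s)_1$ provided by \Cref{propn:curv_op}, subtract the corresponding entries of $(\RR_0)_1$ from \Cref{propn:GZRR}, and Taylor-expand in $s$ about $s=0$, reading off the coefficient of $s$. On the relevant interval $\left[0,r_{\mathrm{max}}^-\right]$ the Grove--Ziller data simplify dramatically: by \eqref{eqn:f1intermsoff} one has $\psi_0\equiv\xi_0\equiv b$, hence $\psi_0'=\xi_0'=\psi_0''=\xi_0''=0$, $\Delta_\psi=\psi_1-b$, $\Delta_\xi=\xi_1-b$, $\Delta_\psi'=\psi_1'$, $\Delta_\xi'=\xi_1'$, and in particular $\psi_s'=s\,\psi_1'$, $\xi_s'=s\,\xi_1'$.

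I would dispatch $\nu_1$ first, since it is purely algebraic: as $[(\RR_s)_1]_{22}=-\varphi_s''/\varphi_s$ and $[(\RR_0)_1]_{22}=-\varphi_0''/\varphi_0$, one gets $\nu_1=(\varphi_0''\varphi_s-\varphi_s''\varphi_0)/(\varphi_0\varphi_s)$, and substituting $\varphi_s=\varphi_0+s\,\Delta_\varphi$ with $\Delta_\varphi=\varphi_1-\varphi_0$ collapses the numerator to $s(\varphi_0''\varphi_1-\varphi_1''\varphi_0)$. Expanding $1/(\varphi_0\varphi_s)=\varphi_0^{-2}\big(1+s\,\Delta_\varphi/\varphi_0\big)^{-1}=\varphi_0^{-2}\big(1+O(s)\big)$ then yields the claimed expression, once one knows $(\varphi_0''\varphi_1-\varphi_1''\varphi_0)/\varphi_0^2$ and $\Delta_\varphi/\varphi_0$ extend to bounded functions on $\left[0,r_{\mathrm{max}}^-\right]$; at $r=0$ this holds because $\varphi_0$ and $\varphi_1$ share the odd Taylor expansion $2r+O(r^3)$, so the relevant numerators vanish to order $r^2$, in agreement with the general regularity analysis culminating in \eqref{eq:generic-entry3}.

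For $\eta_1$ and $\mu_1$ the same scheme applies to the rational expressions of \Cref{propn:curv_op}: write the numerator as a polynomial in $s$, expand the reciprocal of the denominator $\varphi_s^2\psi_s^2\xi_s^2=\varphi_0^2b^4+O(s)$ (respectively $\psi_s\xi_s=b^2+O(s)$ for the summand $-\varphi_s'/\psi_s\xi_s$ of $\mu_1$) as a geometric series, and collect the coefficient of $s$. Two observations shorten the bookkeeping: in $\eta_1$ the contribution $-\psi_s'\xi_s'/\psi_s\xi_s=-s^2\psi_1'\xi_1'/\psi_s\xi_s$ is already $O(s^2)$ and may be dropped; in $\mu_1$ each of the first two summands already carries a factor $\psi_s'=s\,\psi_1'$ or $\xi_s'=s\,\xi_1'$, so only the $s=0$ value of the remaining factor is needed, and there $\psi_0^2+\varphi_0^2-\xi_0^2=\varphi_0^2$. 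The $s^0$-terms of the expansions reproduce $[(\RR_0)_1]_{11}=\tfrac{4b^2-3\varphi_0^2}{4b^4}$ and $[(\RR_0)_1]_{12}=-\varphi_0'/b^2$ and thus cancel against $(\RR_0)_1$; after routine simplification of the $s^1$-coefficients (for $\eta_1$, collecting the $\Delta_\varphi$ and $\Delta_\psi+\Delta_\xi$ terms separately), one obtains the displayed formulas, with $\Delta_\psi'+\Delta_\xi'$ rewritten as $\psi_1'+\xi_1'$.

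The one genuinely delicate point — and the step I expect to demand the most care — is not any single computation but the uniform bundling of the higher-order-in-$s$ remainders as $O(s^2)$ in the sense of the notation fixed above, since $\varphi_0$ vanishes at $r=0$. This is supplied by the regularity discussion preceding \eqref{eq:generic-entry3}: each coefficient arising in the $s$-expansion of an entry of $R_s$ is a polynomial in $\varphi_0,\psi_0,\xi_0$, $\Delta_\varphi,\Delta_\psi,\Delta_\xi$ and their first two derivatives divided by $\varphi_0^2\psi_0^2\xi_0^2$, and the smoothness conditions of \Cref{prop:smoothness} force the numerators to vanish at $r=0$ to at least the matching order, so all such coefficients extend to continuous — hence bounded — functions on $\left[0,r_{\mathrm{max}}^-\right]$. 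Granting this, every error term is uniformly controlled and the proposition follows.
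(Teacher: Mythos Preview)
Your proposal is correct and follows essentially the same approach as the paper: substitute \eqref{eqn:Deltai} into the entries of $(\RR_s)_1$ from \Cref{propn:curv_op}, exploit $\psi_0\equiv\xi_0\equiv b$ on $\left[0,r_{\mathrm{max}}^-\right]$ to identify which summands are already $O(s^2)$, expand the rest to first order in $s$, and invoke the regularity discussion around \eqref{eq:generic-entry3} to justify the uniform $O(s^2)$ remainders at $r=0$. The only cosmetic difference is that the paper first algebraically regroups the numerators (e.g., rewriting $[(\RR_s)_1]_{11}$ as a sum of four simpler rational pieces and $[(\RR_s)_1]_{12}$ as a sum of three) before expanding each piece separately, whereas you expand more directly; both organizations lead to the same coefficients.
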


\begin{proof}
First, let us consider $\eta_1$. From \Cref{propn:curv_op},
\begin{align*}
 [(\RR_s)_1]_{11} &= \frac{\psi_s^4 +\xi_s^4 - \varphi_s^4 + 2(\xi_s^2 - \varphi_s^2)(\varphi_s^2 - \psi_s^2)}{4\varphi_s^2\,\psi_s^2\,\xi_s^2} - \frac{\psi_s'\xi_s'}{\psi_s\xi_s}\\
 &= \frac{(\xi_s^2 - \psi_s^2)^2}{4\varphi_s^2\,\psi_s^2\,\xi_s^2} - \frac{3\varphi_s^2}{4\psi_s^2\,\xi_s^2} + \frac{\xi_s^2 + \psi_s^2}{2\psi_s^2\,\xi_s^2} - \frac{\psi_s'\xi_s'}{\psi_s\xi_s}.
\end{align*}
We analyze these four terms separately using \eqref{eqn:Deltai}, as follows
\begin{align*}
- \frac{3\varphi_s^2}{4\psi_s^2\,\xi_s^2} &= -\frac{3\varphi_0^2}{4b^4} - \frac{3\varphi_0}{2b^5} (b\Delta_\varphi - \varphi_0(\Delta_\psi +\Delta_\xi) )s + O(s^2),\\
 \frac{\xi_s^2 + \psi_s^2}{2\psi_s^2\,\xi_s^2}  &= \frac{1}{b^2} - \frac{\Delta_\psi + \Delta_\xi}{b^3}\,s + O(s^2),\quad  \frac{(\xi_s^2 - \psi_s^2)^2}{4\varphi_s^2\,\psi_s^2\,\xi_s^2} =  O(s^2), \quad - \frac{\psi_s'\xi_s'}{\psi_s\xi_s} = O(s^2).
\end{align*}
Therefore, adding the above together, we find:
\begin{equation*}
 [(\RR_s)_1]_{11} = \frac{4b^2 - 3\varphi_0^2}{4b^4} +\left(  \frac{3\varphi_0}{2b^2} (\varphi_0 (\Delta_\psi + \Delta_\xi)-b \Delta_\varphi ) - \frac{\Delta_\psi + \Delta_\xi}{b^3} \right)s + O(s^2),
\end{equation*}
which establishes the claimed expansion of $\eta_1(s,r)= [(\RR_s)_1]_{11} - \frac{4b^2 - 3\varphi_0^2}{4b^4}$, cf.~\eqref{eq:def-etai-mui-nui}.

Next, consider $\mu_1$. 
From \Cref{propn:curv_op},
\begin{align*}
 [(\RR_s)_1]_{12} &= \frac{\xi_s'(\xi_s^2+\varphi_s^2-\psi_s^2)}{2\varphi_s\,\psi_s\,\xi_s^2} + \frac{\psi_s'(\varphi_s^2+\psi_s^2-\xi_s^2)}{2\varphi_s\,\psi_s^2\, \xi_s} - \frac{\varphi_s'}{\psi_s\,\xi_s}\\ 
 &= \frac{(\xi_s^2 - \psi_s^2)(\xi_s'\psi_s - \psi_s'\xi_s)}{2\varphi_s\,\psi_s^2\,\xi_s^2} + \frac{\varphi_s(\xi_s'\psi_s + \psi_s'\xi_s)}{2\psi_s^2\,\xi_s^2} - \frac{\varphi_s'}{\psi_s\,\xi_s}.
\end{align*}
We analyze these three terms separately, using \eqref{eqn:Deltai}, as before:
\begin{multline*}
 \frac{(\xi_s^2 - \psi_s^2)(\xi_s'\psi_s - \psi_s'\xi_s)}{2\varphi_s\,\psi_s^2\,\xi_s^2} = O(s^2), \quad
\frac{\varphi_s(\xi_s'\psi_s + \psi_s'\xi_s)}{2\psi_s^2\,\xi_s^2} 
 = \frac{\varphi_0(\psi_1' + \xi_1')}{2b^3}\,s + O(s^2),\\
-\frac{\varphi_s'}{\psi_s\,\xi_s} = -\frac{\varphi_0'}{b^2} + \left( \frac{\varphi_0'(\Delta_\psi + \Delta_\xi)}{b^3}-\frac{\Delta_\varphi'}{b^2} \right)s + O(s^2).
\end{multline*}
Thus, adding the above, we have:
\begin{equation*}
 [(\RR_s)_1]_{12} = -\frac{\varphi_0'}{b^2}  + \left( \frac{\varphi_0(\psi_1' + \xi_1')}{2b^3} - \frac{\Delta_\varphi'}{b^2} + \frac{\varphi_0'(\Delta_\psi + \Delta_\xi)}{b^3} \right)s + O(s^2),
\end{equation*} 
which establishes the claimed expansion of $\mu_1(s,r)=[(\RR_s)_1]_{12} +\frac{\varphi_0'}{b^2}$, cf.~\eqref{eq:def-etai-mui-nui}.

Finally, let us consider $\nu_1$. From \Cref{propn:curv_op}, we have:
\begin{equation*}
 [(\RR_s)_1]_{22} = -\frac{\varphi_s''}{\varphi_s}
 = -\frac{\varphi_0''}{\varphi_0} + \left( \frac{-\varphi_1''\varphi_0 + \varphi_0''\varphi_1}{\varphi_0^2} \right)s + O(s^2),
\end{equation*}
which establishes the claimed expansion of $\nu_1(s,r)= [(\RR_s)_1]_{22}+\frac{\varphi_0''}{\varphi_0}$, cf.~\eqref{eq:def-etai-mui-nui}.
\end{proof}

\begin{proposition}\label{prop:Rs1-positive}
If $s>0$ is sufficiently small, then the matrix
\begin{equation*}
  (\RR_s)_1 + \tau_s H = \begin{bmatrix}
    \frac{4b^2 - 3\varphi_0^2}{4b^4} +\eta_1(s,r) & -\frac{3\varphi_0'}{2b^2}+\mu_1(s,r)+\frac{2(\sqrt{3}-b)}{b^3}s \\
  -\frac{3\varphi_0'}{2b^2}+\mu_1(s,r)+\frac{2(\sqrt{3}-b)}{b^3}s & -\frac{\varphi_0''}{\varphi_0}+\nu_1(s,r)
  \end{bmatrix}
 \end{equation*}
 is positive-definite for all $r\in\left[0,r_{\mathrm{max}}^-\right]$.
\end{proposition}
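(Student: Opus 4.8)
The plan is to verify positive-definiteness of the symmetric $2\times2$ matrix $(\RR_s)_1+\tau_s H$ via the elementary criterion ``top-left entry $>0$ and determinant $>0$''. Its top-left entry is $[(\RR_s)_1]_{11}=\sec_{\gm_s}(e_2\wedge e_3)$, which \Cref{propn:sec>0_1234}(ii) already bounds below by $\tfrac{b^2}{4\psi_s^2\xi_s^2}$; since $\psi_s,\xi_s$ are uniformly bounded above on $[0,1]\times\left[0,r_{\mathrm{max}}^-\right]$, this entry is bounded below there by a positive constant, for every $s\in(0,1]$. So the entire difficulty lies in the determinant, which I will denote
\begin{equation*}
\delta_s(r):=\det\!\big((\RR_s)_1+\tau_s H\big)=\Big(\tfrac{4b^2-3\varphi_0^2}{4b^4}+\eta_1\Big)\Big({-}\tfrac{\varphi_0''}{\varphi_0}+\nu_1\Big)-\Big({-}\tfrac{3\varphi_0'}{2b^2}+\mu_1+\tfrac{2(\sqrt3-b)}{b^3}s\Big)^2 .
\end{equation*}

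The key obstruction is that at $s=0$ this determinant is merely \emph{nonnegative}: $\delta_0(r)=\tfrac{1}{4b^4\varphi_0}\big[(4b^2-3\varphi_0^2)(-\varphi_0'')-9\varphi_0(\varphi_0')^2\big]\ge 0$ by \Cref{propn:eqnf1_sec>0}, and in fact $\delta_0$ vanishes on the plateau of $\varphi_0$. Hence no bare continuity argument on $\delta_s$ can work, and I would instead pass to first order in $s$. Invoking the regularity analysis behind \eqref{eq:generic-entry3} (every entry of $\RR_s$, hence $\delta_s$, is a polynomial in $s$ with coefficients continuous in $r\in[0,L]$) together with the expansions of \Cref{propn:Rs1-new}, write $\delta_s(r)=\delta_0(r)+s\,D(r)+s^2\,O(1)$, where
\begin{equation*}
D(r)=-\tfrac{\varphi_0''}{\varphi_0}\,\dot\eta_1+\tfrac{4b^2-3\varphi_0^2}{4b^4}\,\dot\nu_1+\tfrac{3\varphi_0'}{b^2}\Big(\dot\mu_1+\tfrac{2(\sqrt3-b)}{b^3}\Big)
\end{equation*}
is a continuous function of $r\in\left[0,r_{\mathrm{max}}^-\right]$ and $\dot\eta_1,\dot\mu_1,\dot\nu_1$ are the bracketed $O(s)$-coefficients in \Cref{propn:Rs1-new}. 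A standard compactness argument then reduces the desired positive-definiteness to showing $D(r)>0$ at every point of the compact zero set $Z:=\{\,r:\delta_0(r)=0\,\}$: given this, $D\ge c>0$ on an open neighborhood $U\supseteq Z$ while $\delta_0\ge c'>0$ on the compact set $\left[0,r_{\mathrm{max}}^-\right]\setminus U$, and substituting both into $\delta_s=\delta_0+sD+s^2O(1)$ makes $\delta_s>0$ everywhere once $s>0$ is small enough; combined with the top-left entry being positive, this gives $(\RR_s)_1+\tau_s H\succ0$.

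It then remains to locate $Z$ and to check $D>0$ there. For this I would sharpen the proof of \Cref{propn:eqnf1_sec>0}: by \eqref{eq:f''intermsofphi}, $(ab^2-\varphi_0^2)(-\varphi_0'')-3\varphi_0(\varphi_0')^2=-f''\,\tfrac{(ab^2-\varphi_0^2)^{5/2}}{a^{3/2}b^2\rho}$, which is $\ge0$ and vanishes precisely where $f''=0$; hence
\begin{equation*}
4b^4\varphi_0\,\delta_0=(4-3a)b^2(-\varphi_0'')+3\big[(ab^2-\varphi_0^2)(-\varphi_0'')-3\varphi_0(\varphi_0')^2\big]
\end{equation*}
is a sum of two nonnegative terms. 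Since the additional monotonicity assumption in our Grove--Ziller construction makes the curvature $\sec_{\gm_{D^2}}=-f''/f$ strictly positive on $[0,r_0)$, we get $f''<0$ on $(0,r_0)$, so the second term above is strictly positive there and $\delta_0>0$ on $(0,r_0)$; at $r=0$, the top-left and off-diagonal entries of $(\RR_0)_1$ specialize (using $\varphi_0(0)=0$, $\varphi_0'(0)=2$) to $\tfrac1{b^2}$ and $-\tfrac3{b^2}$, so $\delta_0(0)=\tfrac1{b^2}\sec_{\gm_0}(e_0\wedge e_1)(0)-\tfrac9{b^4}$, which a short computation (using $\varphi_0''(0)=0$, the relation between $\varphi_0$ and $f$ with $\rho_-(b)=b/2$, $a\le\tfrac43$, and $\sec_{\gm_{D^2}}(0)>0$) shows to be positive; and on the plateau $\left[r_0,r_{\mathrm{max}}^-\right]$ one has $\varphi_0\equiv b$, hence $\varphi_0'=\varphi_0''=0$ and $\delta_0\equiv0$. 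Therefore $Z=\left[r_0,r_{\mathrm{max}}^-\right]$, and on this interval $\tfrac{4b^2-3\varphi_0^2}{4b^4}=\tfrac1{4b^2}$ with $\varphi_0'=\varphi_0''=0$, so the entire cross-term (hence the $\tfrac{2(\sqrt3-b)}{b^3}$-contribution of $\tau_s$) drops out of $D$ and $D=\tfrac1{4b^2}\,\dot\nu_1=\tfrac{-\varphi_1''}{4b^3}=\tfrac{\sin r}{2b^3}$, using $\varphi_1(r)=2\sin r$. Since $r_0>0$, this is $\ge\tfrac{\sin r_0}{2b^3}>0$, which completes the argument.

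I expect the main obstacle to be the precise identification of $Z$: the inequality $\delta_0\ge0$ of \Cref{propn:eqnf1_sec>0} is not sharp enough by itself, and one must split $4b^4\varphi_0\,\delta_0$ into two nonnegative pieces and use the strict negativity of $f''$ off the plateau — together with the positivity of $\sec_{\gm_0}(e_0\wedge e_1)$ at the singular orbit — to conclude that $\delta_0$ vanishes \emph{only} on the plateau. This exclusion of $r=0$ from $Z$ is genuinely needed, since $D(0)$ is not always positive. Once $Z$ has been pinned down as the plateau, the positivity of $D$ on $Z$ is immediate from $\varphi_1''<0$ for the round metric, and — somewhat curiously — the perturbation $\tfrac{2(\sqrt3-b)}{b^3}s$ of $\tau_0$ plays no role for this first block, being dictated by the other two. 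A secondary point requiring care is the legitimacy of the expansion $\delta_s=\delta_0+sD+s^2O(1)$ with $D$ continuous up to $r=0$ and $r=L$, which is exactly what the regularity discussion in \Cref{sec:sec>0gs} supplies.
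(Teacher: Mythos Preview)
Your proof is correct and follows essentially the same strategy as the paper: use Sylvester's criterion, handle the $(1,1)$-entry via \Cref{propn:sec>0_1234}(ii), expand the determinant as $\delta_0(r)+s\,D(r)+O(s^2)$, and combine $\delta_0\ge0$ (from \Cref{propn:eqnf1_sec>0}) with positivity of the first-order coefficient near the plateau. The only genuine difference is in execution. The paper fixes an intermediate point $r_*<r_0$ and proves two quantitative bounds directly: $A(r)\ge\alpha>0$ on $[0,r_*]$ via the inequality $A\ge\tfrac34\tfrac{(ab^2-\varphi_0^2)^2}{ab^2}\sec_{\gm_{D^2}}$ and \Cref{rem:psec0r1]}, and $B(r)\ge\beta>0$ on $[r_*,r_{\mathrm{max}}^-]$ via explicit $\varepsilon$-estimates on $\varphi_0,\varphi_0',\varphi_0''$ near the plateau. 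You instead identify the exact zero set $Z=[r_0,r_{\mathrm{max}}^-]$ of $\delta_0$, compute $D$ there in closed form as $\tfrac{\sin r}{2b^3}$ (since $\varphi_0'=\varphi_0''=0$ makes all other terms vanish), and then let a soft compactness-and-continuity argument produce the buffer neighborhood $U$. Your route trades the paper's explicit constants for a cleaner computation of $D$ on $Z$, and it makes transparent your nice closing observation that the $\tfrac{2(\sqrt3-b)}{b^3}s$ correction to $\tau_0$ is immaterial for this block. The one place your write-up is thinner is the verification $\delta_0(0)>0$; the paper's bound $A(r)\ge\tfrac34\tfrac{(ab^2-\varphi_0^2)^2}{ab^2}\sec_{\gm_{D^2}}$, specialized to $r=0$, gives this in one line and would be a cleaner replacement for your ``short computation''.
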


\begin{proof}
The expression above for $(\RR_s)_1 + \tau_s H$ follows from \Cref{propn:GZRR}, as well as  \eqref{eq:defH},  \eqref{eqn:thorpe_s}, and \eqref{eq:def-etai-mui-nui}. 
From \Cref{propn:sec>0_1234} (ii), we know that $[(\RR_s)_1]_{11} > 0$ for all $s\in(0,1]$ and $r\in\left[0,r_{\mathrm{max}}^-\right]$. So, by Sylvester's criterion, it suffices to show that if $s>0$ is sufficiently small, then the following is positive:
 \begin{align*}
   \det\!\big((\RR_s)_1 + \tau_s H\big) &=   \left(\frac{4b^2 - 3\varphi_0^2}{4b^4}\right) \left( -\frac{\varphi_0''}{\varphi_0} \right) - \left( \frac{3\varphi_0'}{2b^2} \right)^2  -\frac{\varphi_0''}{\varphi_0}  \,\eta_1(s,r)\\
   &  \quad +   \frac{4b^2 - 3\varphi_0^2}{4b^4} \, \nu_1(s,r) + \frac{3\varphi_0'}{b^2} \left(\mu_1(s,r)+\frac{2(\sqrt{3}-b)}{b^3}s\right)\\
  & \quad + \eta_1(s,r)\, \nu_1(s,r) - \left(\mu_1(s,r)+\frac{2(\sqrt{3}-b)}{b^3}s\right)^2.
 \end{align*}
By \Cref{propn:Rs1-new}, we have $\det\!\big((\RR_s)_1 + \tau_s H\big)= A(r) + B(r) \, s+ O(s^2)$, where
\begin{align*}
 A(r) &:=  \left(\frac{4b^2 - 3\varphi_0^2}{4b^4}\right) \left( -\frac{\varphi_0''}{\varphi_0} \right) - \left( \frac{3\varphi_0'}{2b^2} \right)^2,\\
B(r) &:=  \left( -\frac{\varphi_0''}{\varphi_0} \right) \left( \frac{3\varphi_0}{2b^5} ( \varphi_0(\Delta_\psi +  \Delta_\xi) -b \Delta_\varphi) - \frac{\Delta_\psi + \Delta_\xi}{b^3}  \right)\\
 &   \quad + \left( \frac{4b^2 - 3\varphi_0^2}{4b^4} \right) \left( \frac{-\varphi_1''\varphi_0 + \varphi_0''\varphi_1}{\varphi_0^2} \right) \\
  & \quad + \frac{3\varphi_0'}{b^2} \, \left( \frac{\varphi_0( \psi_1'+\xi_1')}{2b^3} - \frac{\Delta_\varphi'}{b^2} + \frac{\varphi_0'}{b^3}(\Delta_\psi + \Delta_\xi)  + \frac{2(\sqrt{3}-b)}{b^3}\right).
\end{align*}

Note that $A(r)\geq0$ if $r\in\left[0,r_{\mathrm{max}}^-\right]$ by \Cref{propn:eqnf1_sec>0}, but $A(r) \equiv 0$ near $r = r_{\mathrm{max}}^-$. 
We claim that there exist $0<r_*<r_{\mathrm{max}}^-$ and constants $\alpha>0$ and $\beta>0$ such that 
\begin{equation}\label{eq:claim-a-b}
\begin{aligned}
&A(r)\geq \alpha>0 \mbox{ for all } 0\leq r\leq r_*, \\
&B(r)\geq \beta>0 \mbox{ for all } r_*\leq r\leq r_{\mathrm{max}}^-,
\end{aligned}
\end{equation}
from which it clearly follows that $\det\!\big((\RR_s)_1 + \tau_s H\big) >0$ for all $r\in\left[0,r_{\mathrm{max}}^-\right]$ and sufficiently small $s>0$, as desired. Recall that there exists $0< r_0 < r_{\mathrm{max}}^-$ so that:
 \begin{itemize}
  \item for all $r\in (0,r_0)$, we have $\varphi_0'(r) > 0$ and $\varphi_0''(r)<0$,
  \item for all $r\in \left[r_0,r_{\mathrm{max}}^-\right]$, we have $\varphi_0(r) = b$, and hence $\varphi_0'(r) = \varphi_0''(r) =0$,
 \end{itemize}
cf.~\eqref{eqn:f1intermsoff} and the Grove--Ziller construction (\Cref{subsec:GZ-metrics}). Moreover, for all $\varepsilon >0$, there exists  $0<r_*<r_0$, such that for $r \in \left[r_*, r_{\mathrm{max}}^-\right]$, we have:
 \begin{equation}
 \label{eqn:f1epsilon}
  0 \leq \varphi_0'(r) <\varepsilon, \quad 0 \leq -\varphi_0''(r) < \varepsilon, \;\;\mbox{and}\;\; b - \varepsilon < \varphi_0(r) \leq b, 
 \end{equation}
and these inequalities are strict on $[r_*, r_0)$. 
Thus, choosing $\varepsilon >0$ sufficiently small, we have that for all $r\in \left[r_*, r_{\mathrm{max}}^-\right]$,
 \begin{equation*}
  \frac{-\varphi_1''\varphi_0 + \varphi_0''\varphi_1}{\varphi_0^2}
  %
  = \frac{(2\sin r)(\varphi_0 + \varphi_0'')}{\varphi_0^2}
  \geq  \frac{(2\sin r)(b - 2\varepsilon)}{b^2} >  \frac{1}{4b}.
 \end{equation*}
 Furthermore, by continuity, the following are uniformly bounded on $r\in \left[r_*,r_{\mathrm{max}}^-\right]$,
 \begin{align*}
  \left| -\frac{1}{\varphi_0}  \left(  \frac{3\varphi_0 }{2b^5}(\varphi_0(\Delta_\psi +\Delta_\xi)-b \Delta_\varphi ) - \frac{\Delta_\psi + \Delta_\xi}{b^3} \right) \right| < C_1,\\
  \left| \frac{3}{b^2}  \left( \frac{\varphi_0(\psi_1'+\xi_1')}{2b^3} -\frac{\Delta_\varphi'}{b^2} + \frac{\varphi_0'(\Delta_\psi + \Delta_\xi)}{b^3} + \frac{2(\sqrt{3}-b)}{b^3} \right) \right| < C_2,
 \end{align*}
 where $C_1$ and $C_2$ are constants independent of $r_*$; and  $\left( \frac{4b^2 - 3\varphi_0^2}{4b^4} \right) \geq  \frac{1}{4b^2}$ by \eqref{eqn:f1epsilon}. Putting the above together, and making $\varepsilon>0$ even smaller if needed,  we conclude
 \begin{equation*}
  B(r) > -\varepsilon \, C_1 + \tfrac{1}{16b^3} - \varepsilon \, C_2 = \tfrac{1}{16b^3} - \varepsilon\,(C_1 + C_2) > \beta > 0
 \end{equation*}
 for all $r \in \left[r_*, r_{\mathrm{max}}^- \right]$, where, e.g., $\beta=\tfrac{1}{32b^3}$. Finally, in order to prove the inequality regarding $A(r)$ in \eqref{eq:claim-a-b}, recall there exists $c >0$ such that $\sec_{\gm_{D^2}} \geq c>0$ for all $r\in [0, r_*]$, by \Cref{rem:psec0r1]}. From \eqref{eq:f''intermsofphi}, in the proof of \Cref{propn:eqnf1_sec>0}, we have that
 \begin{equation*}
  \sec_{\gm_{D^2}} = -\frac{f''}{f}  = \frac{ab^2}{(ab^2 - \varphi_0^2)^2}\, \frac{(-\varphi_0'')(ab^2 - \varphi_0^2) - 3\varphi_0\varphi_0'^2}{\varphi_0},
  \end{equation*}
  from which it follows that
\begin{equation*}
\frac{3(ab^2 - \varphi_0^2)^2}{ab^2}\,\sec_{\gm_{D^2}} =
  3\left( -\frac{\varphi_0''}{\varphi_0} \right) (ab^2 - \varphi_0^2) - 9\varphi_0'^2 \leq \left( -\frac{\varphi_0''}{\varphi_0} \right)(4b^2 - 3\varphi_0^2) - 9\varphi_0'^2,
\end{equation*}
because $1<a\leq\frac43$.
Therefore, 
as $\varphi_0(r)<\sqrt{a}\,b$ for all $r$, there exists $\alpha>0$ so that 
\begin{equation*}
A(r) \geq \frac{3}{4} \, \frac{(ab^2 - \varphi_0^2)^2}{ab^2}\, \sec_{\gm_{D^2}} \geq \frac{3}{4} \, \frac{(a b^2 - \varphi_0^2)^2}{ab^2}\, c > \alpha > 0, \;\; \mbox{ for all } r\in [0, r_*].\qedhere
 \end{equation*}
\end{proof}

\subsubsection{Remaining blocks}
We now handle the remaining blocks $i=2,3$.

\begin{proposition}\label{propn:Rs23-new}
 For all $r\in \left[0, r_{\mathrm{max}}^-\right]$, the entries of $(\RR_s)_i$, for $i=2,3$, satisfy: 
  \begin{align*}
   \eta_i(s, r) &= \left( \tfrac{\sqrt{3}}{b} + O(r) \right)s + O(s^2),
   &\mu_i(s,r) &= \left(- \tfrac{2(\sqrt{3}-b)}{b^3} + O(r) \right)s + O(s^2),\\
  & &\nu_i(s,r)&= \left( \tfrac{\sqrt{3}}{b} + O(r) \right)s + O(s^2).
  \end{align*}
 \end{proposition}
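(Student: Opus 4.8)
The plan is to mimic the proof of \Cref{propn:Rs1-new}, now for the blocks $i=2,3$, while handling an apparent singularity at $r=0$ that is absent in the first block. By the regularity discussion preceding \Cref{claim-S4}, once the removable singularities have been cleared each entry of $\RR_s$ along $\gamma(r)$ is a polynomial in $s$ with smooth (hence bounded) coefficients on $\left[0,r_{\mathrm{max}}^-\right]$; since $(\RR_0)_i$ is independent of $s$, the same holds for $\eta_i,\mu_i,\nu_i$, which moreover have vanishing constant term. Writing $\eta_i(s,r)=a_i(r)\,s+O(s^2)$ with $a_i$ smooth, and likewise for $\mu_i,\nu_i$, it then suffices to: (a) compute these linear coefficients on the open part $\left(0,r_{\mathrm{max}}^-\right]$; (b) evaluate their limits as $r\searrow0$; and (c) observe that a smooth function on $\left[0,r_{\mathrm{max}}^-\right]$ equals its value at $0$ plus a term of order $O(r)$, while the remaining $s^2$ tail is automatically $O(s^2)$, being $s^2$ times a polynomial with bounded coefficients.

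For (a), on $\left(0,r_{\mathrm{max}}^-\right]$ all the denominators in \Cref{propn:curv_op} are positive, so one differentiates the entries of $(\RR_s)_i$ in $s$ at $s=0$ using \eqref{eqn:Deltai} together with $\psi_0\equiv\xi_0\equiv b$ and $\psi_0'=\xi_0'=\psi_0''=\xi_0''=0$ on this interval, and subtracts the entries of $(\RR_0)_i$ from \Cref{propn:GZRR}; this is a routine (if slightly tedious) computation yielding explicit rational functions of $\varphi_0,\varphi_0',\varphi_0''$, $b$, and of $\Delta_\varphi,\Delta_\psi,\Delta_\xi$ and their first two derivatives. The case $i=3$ then follows from $i=2$ by symmetry: in \Cref{propn:curv_op} the block $i=3$ is obtained from $i=2$ by interchanging $\psi_s\leftrightarrow\xi_s$, which, since $\psi_0\equiv\xi_0\equiv b$, fixes $(\RR_0)_i$ and acts on the first-order perturbation by $\Delta_\psi\leftrightarrow\Delta_\xi$; as the leading $r=0$ behaviour is controlled by the $\psi\leftrightarrow\xi$-symmetric data (notably $\psi_1(0)=\xi_1(0)=\sqrt3$ and $\psi_1''(0)=\xi_1''(0)=-\sqrt3$), it is unchanged, and the asymmetric part $\Delta_\psi-\Delta_\xi=\psi_1-\xi_1=2\sin r=O(r)$ only affects the $O(r)$ remainder.

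The main obstacle is step (b). Unlike in \Cref{propn:Rs1-new}, where all ``singular-looking'' contributions turned out to be $O(s^2)$, here the first-order-in-$s$ expressions for the entries of $(\RR_s)_2$ contain factors $1/\varphi_0$ and $1/\varphi_0^2$ that genuinely diverge as $r\searrow0$ --- for instance $\partial_s\big|_{s=0}\tfrac{\varphi_s'\xi_s'}{\varphi_s\xi_s}=\tfrac{\varphi_0'\,\Delta_\xi'}{b\,\varphi_0}$ behaves like $1/r$, and this is matched by an opposite divergence coming from $\partial_s\big|_{s=0}$ of the first fraction in $[(\RR_s)_2]_{11}$. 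To extract the finite limits one combines the singular terms and feeds in the relevant smoothness conditions $\psi_s^2+\xi_s^2=\widetilde{\phi_1}(r^2)$ and $\psi_s^2-\xi_s^2=r\,\widetilde{\phi_2}(r^2)$ as in the proof of \Cref{propn:gssmooth}, together with $\varphi_0(r)=2r+O(r^3)$ (from the smoothness of $\varphi_0$ and \Cref{prop:smoothness} (i)) and the Taylor data of the standard-metric functions from \eqref{eq:can-phi,psi,xi}: $\varphi_1(0)=0$, $\varphi_1'(0)=2$, $\psi_1(0)=\xi_1(0)=\sqrt3$, $\psi_1'(0)=1$, $\xi_1'(0)=-1$, $\psi_1''(0)=\xi_1''(0)=-\sqrt3$ (so that $\Delta_\varphi=O(r^3)$ and $\Delta_\psi(0)=\Delta_\xi(0)=\sqrt3-b$). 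Carrying this out for the three entries, I expect the divergent parts to cancel and the limits to come out equal to $\tfrac{\sqrt3}{b}$ for the $(1,1)$- and $(2,2)$-entries of $(\RR_s)_2-(\RR_0)_2$ and to $-\tfrac{2(\sqrt3-b)}{b^3}$ for the $(1,2)$-entry, which together with (a) and (c) gives the stated expansions. As a consistency check, these are exactly the constants for which the off-diagonal entry of $(\RR_s)_i+\tau_s H$, $i=2,3$, becomes $O(r)\,s+O(s^2)$ with $\tau_s$ as in \eqref{eqn:thorpe_s} --- which is precisely what the positivity argument for the second and third blocks will need.
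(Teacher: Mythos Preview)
Your proposal is correct and follows essentially the same approach as the paper: expand each entry of $(\RR_s)_i$ in $s$ using \eqref{eqn:Deltai} with $\psi_0\equiv\xi_0\equiv b$, then resolve the apparent $1/\varphi_0$, $1/\varphi_0^2$ singularities at $r=0$ (the paper does this via L'H\^opital rather than by explicitly combining terms, but the content is the same) to read off the constant $\tfrac{\sqrt3}{b}$ or $-\tfrac{2(\sqrt3-b)}{b^3}$ and absorb the rest into $O(r)$. Your $\psi\leftrightarrow\xi$ symmetry shortcut for passing from $i=2$ to $i=3$ is a mild streamlining of what the paper does by direct repetition, and is valid since the antisymmetric data $\psi_1'(0)=-\xi_1'(0)$ only enters the first-order coefficients through $O(r)$ contributions.
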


\begin{proof}
First, let us consider $\eta_2$. From \Cref{propn:curv_op},
 \begin{equation*}
  [{(\RR_s)_2}]_{11} = \frac{\varphi_s^2}{4 \psi_s^2\,\xi_s^2} + \frac{\psi_s^2 - \xi_s^2}{2 \psi_s^2\,\xi_s^2} + \frac{\xi_s^4 + 2\xi_s^2 \psi_s^2 - 3\psi_s^4 - 4\varphi_s\psi_s^2\xi_s\varphi_s'\xi_s'}{4\varphi_s^2 \,\psi_s^2\,\xi_s^2}.
  \end{equation*}
We analyze these three terms separately using \eqref{eqn:Deltai}. The first two satisfy
\begin{equation*}
  \frac{\varphi_s^2}{4 \psi_s^2\,\xi_s^2 } = \frac{\varphi_0^2}{4b^4} + s\, O(r^2) + O(s^2), \quad \text{ and }\quad   \frac{\psi_s^2 - \xi_s^2}{2 \psi_s^2\,\xi_s^2 }= s\, O(r) + O(s^2),
 \end{equation*}
 while the third satisfies
 \begin{align*}
  \frac{\xi_s^4 + 2 \psi_s^2\xi_s^2 - 3\psi_s^4 - 4\varphi_s\psi_s^2\xi_s\varphi_s'\xi_s'}{4\varphi_s^2\,  \psi_s^2\,\xi_s^2} &=\frac{2(\Delta_\xi - \Delta_\psi) - \varphi_0\varphi_0'\Delta_\xi'}{b \varphi_0^2} \, s + O(s^2)\\
  &= \left(\tfrac{\sqrt{3}}{b} + O(r)\right)\,s + O(s^2),
 \end{align*}
 since $\displaystyle\lim_{r\searrow 0} \tfrac{2(\Delta_\xi - \Delta_\psi) - \varphi_0\varphi_0'\Delta_\xi'}{\varphi_0^2} = \sqrt{3}$,  by L'H\^{o}pital's rule and \Cref{prop:smoothness} (i).
 
 Altogether, the above yields $ [(\RR_s)_2]_{11} = \frac{\varphi_0^2}{4b^4} + \left( \frac{\sqrt{3}}{b} + O(r) \right)s + O(s^2)$, and hence establishes the claimed expansion of $\eta_2(s,r) = [(\RR_s)_2]_{11} -\tfrac{\varphi_0^2}{4b^4}$, cf.~\eqref{eq:def-etai-mui-nui}.

Second, the proof that $\eta_3$ has the same expansion as $\eta_2$ is similar. Namely, 
\begin{align*}
  [(\RR_s)_3]_{11} &= \frac{\varphi_s^2}{4\psi_s^2\,\xi_s^2 } + \frac{(\psi_s^2 - \xi_s^2)(\psi_s^2 + 3\xi_s^2 - 2\varphi_s^2) - 4\varphi_s\psi_s\xi_s^2\varphi_s'\psi_s'}{4\varphi_s^2\, \psi_s^2\, \xi_s^2},
 \end{align*}
 where the first term was already considered above, and the second term satisfies
 \begin{equation*}
  \frac{(\psi_s^2 - \xi_s^2)(\psi_s^2 + 3\xi_s^2 - 2\varphi_s^2) - 4\varphi_s\psi_s\xi_s^2\varphi_s'\psi_s'}{4\varphi_s^2\,\psi_s^2 \,\xi_s^2 }\\
  =\left( \frac{\sqrt{3}}{b} + O(r) \right)s + O(s^2),
 \end{equation*}
by similar considerations involving  L'H\^{o}pital's rule and \Cref{prop:smoothness} (i).
Thus, $\eta_3(s,r) = [(\RR_s)_3]_{11}  - \tfrac{\varphi_0^2}{4b^4} = \left( \frac{\sqrt{3}}{b} + O(r) \right)s + O(s^2)$, cf.~\eqref{eq:def-etai-mui-nui}.

Next, consider $\mu_2$. From \Cref{propn:curv_op},
 \begin{equation*}
  [(\RR_s)_2]_{12}  = \frac{\varphi_s'}{2\psi_s\,\xi_s} + \frac{\varphi_s'\xi_s(\psi_s^2 - \xi_s^2) + \varphi_s\xi_s'(\xi_s^2+\psi_s^2-\varphi_s^2) - 2\varphi_s\psi_s\xi_s\psi_s'}{2\varphi_s^2\,\psi_s\,\xi_s^2}
  \end{equation*}
The first term above satisfies
\begin{equation*}
  \frac{\varphi_s'}{2\xi_s\psi_s} = \frac{\varphi_0'}{2b^2} +\left( O(r^2) - \frac{2(\sqrt{3}-b)}{b^3} \right)s + O(s^2),
  \end{equation*}
  while the second satisfies
\begin{equation*}
\frac{\varphi_s'\xi_s(\psi_s^2 - \xi_s^2) + \varphi_s\xi_s'(\xi_s^2+\psi_s^2-\varphi_s^2) - 2\varphi_s\psi_s\xi_s\psi_s'}{2\varphi_s^2\,\psi_s\,\xi_s^2}
= s\, O(r) + O(s^2).
\end{equation*}
So, $\mu_2(s,r) = [(\RR_s)_2]_{12} -\tfrac{\varphi_0'}{2b^2}=\left(- \frac{2(\sqrt{3}-b)}{b^3} + O(r) \right)s + O(s^2)$, cf.~\eqref{eq:def-etai-mui-nui}.
The proof that $\mu_3$ has the same expansion as $\mu_2$ is analogous, and left to the reader.

Finally, let us consider $\nu_2$ and $\nu_3$. From \Cref{propn:curv_op} and \eqref{eq:def-etai-mui-nui}, we have
\begin{equation*}
     \nu_2(s,r) =[(\RR_s)_2]_{22} =-\tfrac{\psi_s''}{\psi_s}\quad\text{ and } \quad \nu_3(s,r) =[(\RR_s)_3]_{22} = -\tfrac{\xi_s''}{\xi_s}.
\end{equation*}
By \eqref{eqn:Deltai}, we have $\psi_s''  = \Delta_\psi''  \, s= \psi_1'' \,s$ and $\xi_s''  = \Delta_\xi'' \,s  = \xi_1'' \,s$, so
\begin{equation*}
      \nu_2(s,r) = \left( \tfrac{\sqrt{3}}{b} + O(r) \right)s + O(s^2), \; \text{ and }\; \nu_3(s,r) = \left( \tfrac{\sqrt{3}}{b} + O(r) \right)s + O(s^2).\qedhere
\end{equation*}
\end{proof}

\begin{proposition}\label{prop:Rs23-positive}
If $s>0$ is sufficiently small, then the matrices
\begin{equation}\label{eq:modified-blocks-23}
  (\RR_s)_i + \tau_s H = \begin{bmatrix}
    \frac{\varphi_0^2}{4b^4} + \eta_i(s,r) & \mu_i(s,r)+\frac{2(\sqrt3-b)}{b^3}s\\
    \mu_i(s,r)+\frac{2(\sqrt3-b)}{b^3}s & \nu_i(s,r)
  \end{bmatrix}, \quad i=2,3,
 \end{equation}
 are positive-definite for all $r\in\left[0, r_{\mathrm{max}}^-\right]$.
\end{proposition}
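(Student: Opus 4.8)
The plan is to run the argument used for the first block in \Cref{prop:Rs1-positive}, exploiting that the relevant $2\times 2$ matrices degenerate in a controlled way as $s\searrow 0$. First I would record, using \Cref{propn:GZRR}, \eqref{eq:defH}, \eqref{eqn:thorpe_s}, and \eqref{eq:def-etai-mui-nui}, the three entries of the matrix \eqref{eq:modified-blocks-23}: its off-diagonal entry is $\mu_i(s,r)+\tfrac{2(\sqrt3-b)}{b^3}s$, which by \Cref{propn:Rs23-new} equals $s\,O(r)+O(s^2)$; its $(2,2)$-entry is $\nu_i(s,r)=[(\RR_s)_i]_{22}=\sec_{\gm_s}(e_0\wedge e_i)$, which is strictly positive for every $s\in(0,1]$ and $r\in[0,r_{\mathrm{max}}^-]$ by \Cref{propn:sec>0_1234} (i); and its $(1,1)$-entry is $\tfrac{\varphi_0^2}{4b^4}+\eta_i(s,r)$. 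By Sylvester's criterion, it therefore suffices to prove that $\det\!\big((\RR_s)_i+\tau_s H\big)>0$ on $[0,r_{\mathrm{max}}^-]$ for both $i=2,3$ and all sufficiently small $s>0$.

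To that end I would expand the determinant in powers of $s$. Writing it as $\tfrac{\varphi_0^2}{4b^4}\,\nu_i+\eta_i\,\nu_i-\big(\mu_i+\tfrac{2(\sqrt3-b)}{b^3}s\big)^2$ and inserting the expansions of \Cref{propn:Rs23-new}, the term $\eta_i\,\nu_i$ contributes $\big(\tfrac{3}{b^2}+O(r)\big)s^2+O(s^3)$, the off-diagonal square contributes $s^2\,O(r^2)+O(s^3)$, and $\tfrac{\varphi_0^2}{4b^4}\nu_i=\tfrac{\varphi_0^2}{4b^4}\,h_i(r)\,s+O(s^2)$, where $h_i(r):=-\psi_1''(r)/b$ if $i=2$ and $h_i(r):=-\xi_1''(r)/b$ if $i=3$. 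Since the round metric $\gm_1$ has $\sec\equiv1$ one has $\psi_1''<0$ and $\xi_1''<0$ on $[0,r_{\mathrm{max}}^-]$ (see the proof of \Cref{propn:sec>0_1234}), so $h_i$ is continuous and strictly positive there. Hence
\[
\det\!\big((\RR_s)_i+\tau_s H\big)=\tfrac{\varphi_0^2}{4b^4}\,h_i(r)\,s+\Big(\tfrac{3}{b^2}+O(r)\Big)s^2+O(s^3),
\]
where the coefficient of $s$ is $\ge0$ and vanishes only at $r=0$, at which point it is dominated by the strictly positive coefficient of $s^2$.

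Finally I would split $[0,r_{\mathrm{max}}^-]$ at a suitable $0<r_*<r_0$, exactly as in \Cref{prop:Rs1-positive}. Choosing $r_*$ so small that the $O(r)$ term in the $s^2$-coefficient is bounded in absolute value by $\tfrac{3}{2b^2}$ on $[0,r_*]$, nonnegativity of the $s$-coefficient gives $\det\!\big((\RR_s)_i+\tau_s H\big)\ge\tfrac{3}{2b^2}s^2+O(s^3)>0$ on $[0,r_*]$ for small $s>0$. On $[r_*,r_{\mathrm{max}}^-]$, the function $\varphi_0$ is nondecreasing with $\varphi_0(r_*)>0$ and $h_i$ is bounded below by a positive constant, so $\tfrac{\varphi_0^2}{4b^4}h_i(r)\ge c>0$ there, whence $\det\!\big((\RR_s)_i+\tau_s H\big)\ge c\,s+O(s^2)>0$ for small $s>0$; throughout I use, as in \Cref{sec:sec>0gs}, that the $O$-terms have coefficients bounded uniformly in $(s,r)$, so that the threshold on $s$ may be chosen independently of $r$. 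Combining the two ranges yields $\det\!\big((\RR_s)_i+\tau_s H\big)>0$ on all of $[0,r_{\mathrm{max}}^-]$ for small $s>0$, and together with positivity of the $(2,2)$-entry this gives $(\RR_s)_i+\tau_s H\succ0$, as desired. I expect the computation in the second paragraph to be the main obstacle: one must verify that the $s$-linear part of the determinant is \emph{exactly} $\tfrac{\varphi_0^2}{4b^4}h_i(r)$ — that is, that neither $\eta_i\nu_i$ nor the off-diagonal square contributes at order $s$ — and that this coefficient is genuinely nonnegative with its single zero at $r=0$ absorbed by the order-$s^2$ term.
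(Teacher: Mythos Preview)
Your argument is correct and follows essentially the same route as the paper's proof. The only cosmetic differences are that the paper applies Sylvester's criterion via the $(1,1)$-entry rather than the $(2,2)$-entry, and keeps the linear-in-$s$ part of the determinant packaged as $[(R_s)_i]_{22}\,\tfrac{\varphi_0^2}{4b^4}$ (positive for $r>0$ by \Cref{propn:sec>0_1234}) instead of extracting the coefficient $h_i(r)$ explicitly; the interval-splitting you make explicit is left implicit in the paper.
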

 
\begin{proof}
The expression \eqref{eq:modified-blocks-23} for $(\RR_s)_i + \tau_s H$, $i=2,3$, follows from \Cref{propn:GZRR}, as well as \eqref{eq:defH}, \eqref{eqn:thorpe_s}, and \eqref{eq:def-etai-mui-nui}. First, consider the $(1,1)$-entry of these matrices:
  \begin{equation*}
   \phantom{\quad \text{ for }i=2,3.}[(\RR_s)_i]_{11} = \tfrac{\varphi_0^2}{4b^4} + \left( \tfrac{\sqrt{3}}{b} + O(r) \right)s + O(s^2), \quad \text{ for }i=2,3,
  \end{equation*}
cf.~\Cref{propn:Rs23-new}. Since $\varphi_0(r)>0$ away from $r=0$, and the $O(s)$ part of the above is uniformly positive near $r=0$, it follows that $[(\RR_s)_i]_{11}>0$ for all $r\in\left[0, r_{\mathrm{max}}^-\right]$ and $i=2,3$, provided $s>0$ is sufficiently small.

Second, let us analyze the determinant of \eqref{eq:modified-blocks-23}. By \Cref{propn:Rs23-new},
\begin{align*}
  \eta_i(s,r)  \nu_i(s,r) &= \left( \tfrac{3}{b^2} + O(r) \right)s^2 + O(s^3),\\
   \mu_i(s,r)+\tfrac{2(\sqrt3-b)}{b^3}s &= 
  s\, O(r) + O(s^2).
 \end{align*}
Thus, using that $\nu_i(s,r)=[(R_s)_i]_{22}$, for $i=2,3$, we have:
 \begin{align*}
  \det\!\big( (\RR_s)_i + \tau_s H\big) &= \nu_i(s,r)\, \tfrac{\varphi_0^2}{4b^4} + \left( \tfrac{3}{b^2} + O(r) \right)s^2  + O(s^3)\\
  &=  [(R_s)_i]_{22} \, \tfrac{\varphi_0^2}{4b^4} + \left( \tfrac{3}{b^2} + O(r) \right)s^2 + O(s^3).
 \end{align*}
By \Cref{propn:sec>0_1234} (i), the $O(s)$ part of the above is positive for $r\in\left(0,r_{\mathrm{max}}^- \right]$, but vanishes at $r= 0$, as $\varphi_0(0)=0$. Since the $O(s^2)$ part has a positive limit as $r\searrow 0$, we have that $\det\!\big( (\RR_s)_i+ \tau_s H\big) >0$ for all $r\in \left[0, r_{\mathrm{max}}^-\right]$ and $i=2,3$, if $s>0$ is sufficiently small. Positive-definiteness now follows from Sylvester's criterion.
 \end{proof}

The above \Cref{prop:Rs1-positive,prop:Rs23-positive} imply \Cref{claim-S4}, since $R_s+\tau_s\,*$ is block diagonal with blocks $(R_s)_i+\tau_s H$, $i=1,2,3$, see \Cref{propn:curv_op} and \eqref{eq:defH}. In turn, \Cref{claim-S4} and the Finsler--Thorpe trick (\Cref{prop:FTtrick}) imply that $\sec_{\gm_s}>0$ for sufficiently small $s>0$. This proves \Cref{mainthmB} for $M^4=S^4$; since, if the original Grove--Ziller metric $\gGZ$ was rescaled as $\gm_0=\lambda^2\,\gGZ$ to standardize $L=\frac\pi3$, then $\lambda^{-2}\,\gm_s$ has $\sec>0$ and is arbitrarily $C^\infty$-close to $\gGZ$ for $s>0$ sufficiently small.

\subsection{\texorpdfstring{Positive curvature on $\C P^2$}{Positive curvature on the complex projective plane}}
We now briefly discuss the proof of \Cref{mainthmB} for $M^4=\C P^2$. Recall that, in this case, $L=\tfrac{\pi}{4}$, with $r_{\mathrm{max}}^-=\frac\pi6$ and $r_{\mathrm{max}}^+=\frac{\pi}{12}$.
Differently from $S^4$, for $M^4=\C P^2$, the situation on the intervals $[0,r_{\mathrm{max}}^-]=\left[0, \tfrac{\pi}{6}\right]$ and $[r_{\mathrm{max}}^-,L]=\left[\tfrac{\pi}{6},\tfrac{\pi}{4}\right]$ has to be analyzed separately, cf.~\Cref{rem:extra-symm}. 

Denoting by $R_0$ the curvature operator of the Grove--Ziller metric $\gm_0$ on $\C P^2$, the function $\tau_0\colon [0,L]\to\R$ so that $R_0+\tau_0\,*\succeq0$ for all $r\in [0,L]$ is given by
\begin{equation*}
  \tau_0(r) = \begin{cases}
      -\frac{\varphi_0'(r)}{2b^2}, & \text{ if } r \in \left[0,r_{\mathrm{max}}^-\right], \\[5pt]
      -\frac{\xi_0'(r)}{2b^2}, & \text{ if } r \in \left[r_{\mathrm{max}}^-,L\right],
  \end{cases}
 \end{equation*}
cf.~\Cref{propn:GZRR}. Note that $\varphi_0'=\xi_0'=0$ near $r=r_{\mathrm{max}}^-$. The proof of \Cref{mainthmB} follows in the same way as in the case $M^4=S^4$ above, replacing \Cref{claim-S4} with:

\begin{claim}\label{claim-CP2}
If $s>0$ is sufficiently small, then $R_s+\tau_s\,*\succ0$ for all $r\in \left[0,L\right]$,~where    
\begin{equation*}
  \tau_s(r) := 
   \begin{cases}
      -\frac{\varphi_0'(r)}{2b^2} + \left(\frac{3}{2b} +\frac{1-b}{b^3}\right)s, & \text{ if } r \in \left[0,r_{\mathrm{max}}^-\right], \\[5pt]
      -\frac{\xi_0'(r)}{2b^2} +  \tfrac{\sqrt{2}\, -\, 2b}{b^3} \, s, & \text{ if } r \in  \left(r_{\mathrm{max}}^-,L\right].
  \end{cases}
 \end{equation*}
\end{claim}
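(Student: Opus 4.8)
The plan is to run, \emph{mutatis mutandis}, the same argument used for $S^4$ (\Cref{claim-S4}): since $R_s+\tau_s\,*$ is block diagonal with $2\times 2$ blocks $(R_s)_i+\tau_s H$, $i=1,2,3$ (\Cref{propn:curv_op}, \eqref{eq:defH}), the Finsler--Thorpe trick (\Cref{prop:FTtrick}) reduces \Cref{claim-CP2} to showing each of these blocks is positive-definite for all $r\in[0,L]$ and all sufficiently small $s>0$, after which \Cref{prop:FTtrick} again yields $\sec_{\gm_s}>0$ and rescaling back proves \Cref{mainthmB} for $M^4=\C P^2$. The one structural novelty, visible in the piecewise form of $\tau_s$, is that $\C P^2$ lacks the reflection symmetry \eqref{eqn:gS4sym}, so the two disk bundles $D(B_-)$ and $D(B_+)$ — equivalently, the subintervals $\left[0,r_{\mathrm{max}}^-\right]=\left[0,\tfrac\pi6\right]$ and $\left[r_{\mathrm{max}}^-,L\right]=\left[\tfrac\pi6,\tfrac\pi4\right]$ — must be handled separately. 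Continuity of $\tau_s$ across $r=r_{\mathrm{max}}^-$ is irrelevant, as \Cref{prop:FTtrick} is pointwise; near $r_{\mathrm{max}}^-$ all of $\varphi_0,\psi_0,\xi_0$ are constant equal to $b$, so both branches of $\tau_s$ produce positive-definite blocks on an overlap.

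On $\left[0,r_{\mathrm{max}}^-\right]$ the picture is essentially that of $S^4$: there $\psi_0\equiv\xi_0\equiv b$ and $\varphi_0$ is the Grove--Ziller profile \eqref{eqn:f1intermsoff} for $D(B_-)$ (now with $\rho_-(b)=b$), so \Cref{propn:GZRR}, \Cref{propn:eqnf1_sec>0}, and \Cref{rem:psec0r1]} apply as stated. I would replace the round profiles by the Fubini--Study ones \eqref{eq:can-phi,psi,xi} — noting $\varphi_1''=-\sin r<0$, $\psi_1''=-\cos r<0$, $\xi_1''=-4\cos 2r<0$ on $\left[0,\tfrac\pi6\right]$, so the analogue of \Cref{propn:sec>0_1234} holds — then redo the $s$-expansions of the three blocks exactly as in \Cref{propn:Rs1-new,propn:Rs23-new} with the new constants. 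The correction $\big(\tfrac{3}{2b}+\tfrac{1-b}{b^3}\big)s$ in $\tau_s$ is precisely what cancels the $O(s)$, $O(r^0)$ part of the off-diagonal entries of $(R_s)_2+\tau_s H$ and $(R_s)_3+\tau_s H$ at $r=0$ (computed via L'H\^opital and \Cref{prop:smoothness}); with it, the determinant arguments of \Cref{prop:Rs1-positive,prop:Rs23-positive} — the ``$A(r)\ge\alpha$ on $[0,r_*]$, $B(r)\ge\beta$ on $\left[r_*,r_{\mathrm{max}}^-\right]$'' dichotomy for the first block, and the ``$O(s^2)$ part has positive limit at $r=0$'' argument for the other two — carry over unchanged.

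On $\left[r_{\mathrm{max}}^-,L\right]$ the roles of the coordinate directions permute: now $\varphi_0\equiv\psi_0\equiv b$ and $\xi_0$ is the Grove--Ziller profile for $D(B_+)$, with $\rho_+(b)=b/2$ and $\xi_0(L)=0$, $\xi_0'(L)=-2$. Under the substitution exchanging the roles of $\varphi$ and $\xi$ in \Cref{propn:curv_op}, the block $(R_0)_3$ (containing $-\xi_0''/\xi_0$) plays the role that $(R_0)_1$ played on $D(B_-)$ — in particular its determinant equals a nonnegative ``$A(r)$''-type quantity by \Cref{propn:eqnf1_sec>0} applied to $\xi_0$, uniformly positive away from a neighborhood of $r_{\mathrm{max}}^-$ by \Cref{rem:psec0r1]} — while $(R_0)_1$ and $(R_0)_2$ acquire vanishing $(2,2)$-entry. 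The correction $\tfrac{\sqrt{2}-2b}{b^3}s$ in the second branch of $\tau_s$ is the exact analogue of the $S^4$ constant, with the value of $\sqrt{3}$ there replaced by the one forced by $\xi_1=\cos 2r$ near $r=L=\tfrac\pi4$ (where $\varphi_1=\psi_1=\tfrac{\sqrt2}{2}$); it is pinned down by requiring the $O(s)$, $O((L-r)^0)$ part of the off-diagonal entries of $(R_s)_1+\tau_s H$ and $(R_s)_2+\tau_s H$ to vanish at $r=L$. Repeating \Cref{prop:Rs1-positive,prop:Rs23-positive} near $r=L$ then gives block-by-block positive-definiteness on this subinterval, and combining with the previous paragraph establishes \Cref{claim-CP2}.

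I expect the main obstacle to be purely computational bookkeeping rather than anything conceptual: keeping straight which block is ``nondegenerate'' on which subinterval, checking that \Cref{propn:eqnf1_sec>0,rem:psec0r1]} transfer to $\xi_0$ with the new radius $\rho_+=b/2$, and recomputing all the $s$-expansions of \Cref{propn:Rs1-new,propn:Rs23-new} with the Fubini--Study profiles to confirm the two correction constants $\tfrac{3}{2b}+\tfrac{1-b}{b^3}$ and $\tfrac{\sqrt{2}-2b}{b^3}$ are the right ones. The only genuinely delicate step is the behavior at the singular orbits $r=0$ and $r=L$, where several block entries are a priori singular and one must invoke the smoothness conditions of \Cref{prop:smoothness} together with L'H\^opital to extract finite limits — and here the $\C P^2$ conditions (ii)--(iii), (v)--(vi) differ from the $S^4$ ones (e.g.\ $\psi^2-\xi^2=r^2\phi_6(r^2)$ rather than $r\,\phi_2(r^2)$), so those limits, and hence the precise shape of $\tau_s$, change accordingly.
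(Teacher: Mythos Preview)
Your plan is correct and matches the paper's approach almost exactly: the paper likewise splits $[0,L]$ into $\left[0,r_{\mathrm{max}}^-\right]$ and $\left[r_{\mathrm{max}}^-,L\right]$, reuses \Cref{propn:Rs1-new} verbatim for block $i=1$ on the first subinterval (and its analogue for $i=3$ on the second), replaces \Cref{propn:Rs23-new} by new expansions (\Cref{propn:RsCP2}), and then reruns the determinant arguments of \Cref{prop:Rs1-positive,prop:Rs23-positive}. One small inaccuracy in your description: on $\left[0,r_{\mathrm{max}}^-\right]$ the correction $\big(\tfrac{3}{2b}+\tfrac{1-b}{b^3}\big)s$ cancels the leading off-diagonal term of $(R_s)_2+\tau_s H$ but \emph{not} of $(R_s)_3+\tau_s H$ --- unlike $S^4$, here $\mu_2$ and $\mu_3$ have different $O(s)$ constants (namely $-\tfrac{3}{2b}-\tfrac{1-b}{b^3}$ versus $+\tfrac{3}{2b}-\tfrac{1-b}{b^3}$), precisely because of the $r^2\phi_6(r^2)$ smoothness condition you flagged --- so for block $3$ the off-diagonal at $r=0$ is $\tfrac{3}{b}s+O(s^2)$, and positivity of the determinant at $r=0$ instead follows from $\eta_3\nu_3\sim\tfrac{16}{b^2}s^2>\tfrac{9}{b^2}s^2$.
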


\begin{remark}
Similarly to \eqref{eqn:thorpe_s} in \Cref{claim-S4}, the above function $\tau_s$ is obtained from $\tau_0$ by adding a locally constant multiple of $s$. This $O(s)$ perturbation is not constant as in the case of $M^4=S^4$, and, as a result, $\tau_s(r)$ is \emph{discontinuous} at $r=r_{\mathrm{max}}^-$ for all $s>0$. Nevertheless, the application of the Finsler--Thorpe trick (\Cref{prop:FTtrick}) is pointwise and no regularity is needed.
A posteriori, a continuous function $\widetilde\tau_s(r)$ such that $R_s+\widetilde{\tau_s}\,*\succ0$ for all sufficiently small $s>0$ can be chosen, e.g., as the midpoint $\widetilde\tau_s(r)=\frac{1}{2}(\tau_{\mathrm{min}} +\tau_{\mathrm{max}})$ of $[\tau_{\mathrm{min}},\tau_{\mathrm{max}}]$ for each $r\in [0,L]$, see \Cref{rem:set-of-taus}.
\end{remark}

The proof of \Cref{claim-CP2} follows the same template from \Cref{claim-S4}, relying on expansions in $s$ of the functions $\eta_i,\mu_i,\nu_i$, cf.~\eqref{eq:def-etai-mui-nui}.
The statement of \Cref{propn:Rs1-new}, regarding $i=1$ and $r\in [0, r_{\mathrm{max}}^-]$, holds \emph{tout court} for $\C P^2$, since the smoothness conditions of $\varphi,\psi,\xi$ at $r=0$ are not used in the proof. The case of $i=3$ and $r\in [r_{\mathrm{max}}^-,L]$ is analogous.
The replacement for \Cref{propn:Rs23-new} is the following: 

\begin{proposition}\label{propn:RsCP2}
For $r\in \left[0, r_{\mathrm{max}}^-\right]$, the entries of $(R_s)_i$, $i=2,3$, satisfy: 
 \begin{align*}
  \eta_2(s,r) &= \left(\tfrac{1}{b} + O(r)\right)s + O(s^2), &  \mu_2(s,r) &= \left(-\tfrac{3}{2b} -\tfrac{1-b}{b^3} + O(r)\right)s + O(s^2),\\
  & &\nu_3(s,r) &= \left(\tfrac{4}{b} + O(r)\right)s + O(s^2),\\
  \eta_3(s,r) &= \left(\tfrac{4}{b} + O(r)\right)s + O(s^2), & \mu_3(s,r) &= \left( \tfrac{3}{2b} -\tfrac{1-b}{b^3}+ O(r)\right)s + O(s^2),\\
  &  &\nu_2(s,r) &=\left(\tfrac{1}{b} + O(r)\right)s  + O(s^2).
 \end{align*}
For $r\in \left[r_{\mathrm{max}}^-,L\right]$,  setting $z=L-r$, the entries of $(R_s)_i$, $i=1,2$, satisfy:
 \begin{align*}
\eta_i(s,z) &= \left(\tfrac{1}{b\sqrt{2}} + O(z)\right)s + O(s^2), &  \mu_i(s,z) &= \left( -\tfrac{\sqrt{2} - 2b}{b^3} + O(z)\right)s + O(s^2),  \\
 & &\nu_i(s,z) &= \left(\tfrac{1}{b\sqrt{2}} + O(z)\right)s + O(s^2).
 \end{align*}
\end{proposition}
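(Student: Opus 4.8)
The plan is to run the computation in exact parallel with \Cref{propn:Rs23-new}, exploiting that on each of the two subintervals two of the three profiles $\varphi_0,\psi_0,\xi_0$ are identically equal to $b$. On $\left[0,r_{\mathrm{max}}^-\right]$ one has $\psi_0\equiv\xi_0\equiv b$, hence $\psi_s=b+s\,\Delta_\psi$ and $\xi_s=b+s\,\Delta_\xi$ with $\Delta_\psi=\cos r-b$ and $\Delta_\xi=\cos 2r-b$, while $\varphi_s=\varphi_0+s\,\Delta_\varphi$, with $\varphi_0$ the Grove--Ziller profile \eqref{eqn:f1intermsoff} and $\Delta_\varphi=\sin r-\varphi_0$. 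On $\left[r_{\mathrm{max}}^-,L\right]$ the roles of $\n_1$ and $\n_3$ swap, so now $\varphi_0\equiv\psi_0\equiv b$ and $\xi_0$ is the Grove--Ziller profile; there I substitute $z=L-r$. For each relevant block --- $i=2,3$ in the first regime and $i=1,2$ in the second, the remaining ``collapsing'' block (carrying $-\varphi_s''/\varphi_s$, resp.\ $-\xi_s''/\xi_s$) being covered by the analogue of \Cref{propn:Rs1-new} --- I plug these into the formulae of \Cref{propn:curv_op}, check that the $s^0$ term reproduces the corresponding Grove--Ziller block from \Cref{propn:GZRR} (suitably reindexed on $\left[r_{\mathrm{max}}^-,L\right]$ via $\varphi_0\leftrightarrow\xi_0$, so that e.g.\ $(R_0)_1=(R_0)_2=\begin{bmatrix}\frac{\xi_0^2}{4b^4}&\frac{\xi_0'}{2b^2}\\ \frac{\xi_0'}{2b^2}&0\end{bmatrix}$), and read off $\eta_i,\mu_i,\nu_i$ as the $O(s)$ coefficients with $O(s^2)$ remainders.

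The diagonal entries are immediate: on $\left[0,r_{\mathrm{max}}^-\right]$ one has $\nu_2=-\psi_s''/\psi_s$ and $\nu_3=-\xi_s''/\xi_s$, and since $\psi_s''=s\,\psi_1''=-s\cos r$, $\xi_s''=s\,\xi_1''=-4s\cos 2r$ while $\psi_0=\xi_0=b$, this gives $\nu_2=\big(\tfrac{\cos r}{b}+\dots\big)s+O(s^2)$ and $\nu_3=\big(\tfrac{4\cos 2r}{b}+\dots\big)s+O(s^2)$; similarly $\nu_1=-\varphi_s''/\varphi_s=\big(\tfrac{\sin r}{b}+\dots\big)s+O(s^2)$ and $\nu_2=\big(\tfrac{\cos r}{b}+\dots\big)s+O(s^2)$ on $\left[r_{\mathrm{max}}^-,L\right]$, and evaluating $\cos r$, $\cos 2r$, $\sin r$ at $r=0$, resp.\ $r=L=\tfrac\pi4$, yields the constants $\tfrac1b$, $\tfrac4b$, $\tfrac1b$, $\tfrac{1}{b\sqrt2}$. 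The delicate part, exactly as in \Cref{propn:Rs23-new}, is the $O(s)$ coefficients of the off-diagonal entries and of $[(R_s)_i]_{11}$, which a priori carry $\varphi_0^2$ (on $\left[0,r_{\mathrm{max}}^-\right]$) or $\xi_0^2$ (on $\left[r_{\mathrm{max}}^-,L\right]$) in their denominators and so look singular at $r=0$, resp.\ $z=0$. To handle these I invoke the $\C P^2$ smoothness conditions of \Cref{prop:smoothness}: on $\left[0,r_{\mathrm{max}}^-\right]$, condition (iii) supplies the \emph{quadratic} vanishing $\psi^2-\xi^2=r^2\phi_6(r^2)$ (whereas it was only linear for $S^4$), together with $\varphi_0'(0)=1$ (rather than $2$); on $\left[r_{\mathrm{max}}^-,L\right]$, condition (vi) supplies the linear vanishing $\psi^2-\varphi^2=z\,\phi_8(z^2)$, together with $\xi_0'(L)=-2$ and $\varphi_1(L)=\psi_1(L)=\tfrac{1}{\sqrt2}$. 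Combined with repeated use of L'H\^opital's rule, these show that each such quotient extends smoothly across the singular value with the stated limit, so that the $O(r)$, resp.\ $O(z)$, errors in the expansions are genuinely bounded.

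I expect the main obstacle to be the computation on $\left[r_{\mathrm{max}}^-,L\right]$: there, in $[(R_s)_i]_{11}$ for $i=1,2$, several terms individually blow up like $1/z$ as $z\searrow0$ --- for instance the combination $\frac{(\xi_s^2-\psi_s^2)^2}{4\varphi_s^2\psi_s^2\xi_s^2}-\frac{3\varphi_s^2}{4\psi_s^2\xi_s^2}+\frac{\xi_s^2+\psi_s^2}{2\psi_s^2\xi_s^2}$ against $-\frac{\psi_s'\xi_s'}{\psi_s\xi_s}$ --- and only their sum is finite, so one must first establish the cancellation of the $1/z$ terms (this is where condition (vi) enters) and then push the expansion to the next order in $z$ to extract the constants $\tfrac{1}{b\sqrt2}$ and $-\tfrac{\sqrt2-2b}{b^3}$. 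A bookkeeping subtlety of the same flavour occurs on $\left[0,r_{\mathrm{max}}^-\right]$ for $[(R_s)_3]_{11}$ and the off-diagonal entries, producing the constants $\tfrac4b$ and $\pm\tfrac{3}{2b}-\tfrac{1-b}{b^3}$; here too the exact orders of vanishing in conditions (iii) and (vi) are what fix the numerical values, and getting them right --- rather than any conceptual difficulty --- is the crux. Once all six expansions in the statement are in hand, they feed into the $\C P^2$ analogues of \Cref{prop:Rs1-positive,prop:Rs23-positive} exactly as in the $S^4$ case, the $O(s)$ constants in the off-diagonal entries being precisely the ones cancelled by the $O(s)$ correction to $\tau_0$ in \Cref{claim-CP2}.
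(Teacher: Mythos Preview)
Your proposal is correct and follows essentially the same approach as the paper, which in fact only states that the proof is ``totally analogous to that of \Cref{propn:Rs23-new}'' and records the expressions $\varphi_1(z)=\tfrac{1}{\sqrt2}(\cos z-\sin z)$, $\psi_1(z)=\tfrac{1}{\sqrt2}(\cos z+\sin z)$, $\xi_1(z)=\sin 2z$ for the second interval. Your roadmap---swap $\varphi_0\leftrightarrow\xi_0$ on $[r_{\mathrm{max}}^-,L]$, use the $\C P^2$-specific smoothness conditions (i), (iii), (iv), (vi) of \Cref{prop:smoothness} with L'H\^opital to resolve the apparent singularities in the first-order coefficients, and read off the constants at $r=0$ and $z=0$---is exactly the intended computation.
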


The proof of \Cref{propn:RsCP2} is totally analogous to that of \Cref{propn:Rs23-new}; noting that, in terms of $z = L - r\in\left[0,r_{\mathrm{max}}^+\right]$, the functions $\varphi_1,\psi_1,\xi_1$ are:
 \begin{equation*}
  \textstyle \varphi_1(z) = \frac{1}{\sqrt{2}}\left(\cos z - \sin z\right), \quad \psi_1(z) = \frac{1}{\sqrt{2}}\left(\cos z + \sin z\right), \quad \xi_1(z) = \sin 2z.
 \end{equation*}
Finally, similarly to \Cref{prop:Rs1-positive,prop:Rs23-positive}, it can be shown that $(R_s)_i+\tau_sH$, $i=1,2,3$, are positive-definite for all $r\in [0,L]$ and $s>0$ sufficiently small, which proves \Cref{claim-CP2} (and hence \Cref{mainthmB}) for $\C P^2$. Details are left to the~reader.

\section{Positive turns negative}
\label{sec:pos-neg}

In this section, we prove \Cref{mainthmA}, using the fact that Grove--Ziller metrics on $S^4$ and $\C P^2$ immediately acquire negatively curved planes under Ricci flow~\cite{bettiol-krishnan1}, together with \Cref{mainthmB}, and continuous dependence on initial data~\cite{BGI20}.

\begin{proof}[Proof of \Cref{mainthmA}]
Let $M^4$ be either $S^4$ or $\C P^2$, and consider the $1$-parameter family of metrics $\gm_s$ on $M^4$, defined in \eqref{eq:gs}, such that $\gm_0$ is a Grove--Ziller metric and $\gm_1$ is either the round metric or the Fubini--Study metric, accordingly. From \Cref{propn:gssmooth}, the metrics $\gm_s$ are  smooth, and it is evident from \eqref{eq:def-phis-psis-xis} and \eqref{eq:gs} that, for all $k\geq0$ and $0<\alpha<1$, there exists a constant $\lambda_{k,\alpha}>0$ such that
\begin{equation}\label{eq:gs-g0}
    \phantom{\quad \text{ for all } 0\leq s\leq 1.}\|\gm_s-\gm_0\|_{C^{k,\alpha}}\leq \lambda_{k,\alpha}\, s, \quad \text{ for all } 0\leq s\leq 1,
\end{equation}
where $\|\cdot\|_{C^{k,\alpha}}$ denotes the H\"older norm on sections of the bundle $E=\Sym^2 TM^4$ with respect to a fixed background metric.
For $0\leq s\leq 1$, let $\gm_s(t)$, $0\leq t < T(\gm_s)$, be the maximal solution to Ricci flow starting at $\gm_s(0)=\gm_s$, where $0<T(\gm_s)\leq +\infty$ denotes the maximal (smooth) existence time of the flow. For all $0\leq s\leq 1$ and $0\leq t<T(\gm_s)$, we have that $\gm_s(t)\in C^\infty(E)$, so $\gm_s(t)$ is in the proper closed subspace $h^{k,\alpha}(E) \subset C^{k,\alpha}(E)$ for all $k\geq0$ and $0<\alpha<1$, in the notation of \cite{BGI20}. 

From the main theorem in \cite{bettiol-krishnan1}, there exist a $2$-plane $\sigma$ tangent to $M^4$ and $t_0>0$ such that $\sec_{\gm_0}(\sigma)=0$ and $\sec_{\gm_0(t)}(\sigma)<0$ for all $0<t<t_0$. Fix $0<t_*<t_0$, and let $\delta>0$ be such that $\sec_\gm(\sigma)<0$ for all metrics $\gm$ with $\|\gm-\gm_0(t_*)\|_{C^{2,\alpha}}<\delta$.
By the continuous dependence of Ricci flow on initial data~\cite[Thm A]{BGI20}, there exist constants $\mathrm r>0$ and $C>0$, depending only on $t_*$ and $\gm_0$, such that, if $\|\gm_s-\gm_0\|_{C^{4,\alpha}}\leq \mathrm r$, then $T(\gm_s)\geq t_0$ and $\|\gm_s(t) - \gm_0(t)\|_{C^{2,\alpha}} \leq C \|\gm_s - \gm_0\|_{C^{4,\alpha}}$ for all $t\in [0, t_0]$.
Together with \eqref{eq:gs-g0}, this yields that if $0\leq s\leq \mathrm r/\lambda_{4,\alpha}$, then
\begin{equation*}
 \|\gm_s(t) - \gm_0(t)\|_{C^{2,\alpha}}
 \leq C\,\|\gm_s - \gm_0\|_{C^{4,\alpha}} 
 \leq C\, \lambda_{4,\alpha}\, s, \quad \text{ for all } 0\leq t\leq t_0.
\end{equation*}
Thus, $\|\gm_s(t_*)-\gm_0(t_*)\|_{C^{2,\alpha}}<\delta$ and so $\sec_{\gm_s(t_*)}(\sigma)<0$, for all $0\leq s<\delta/(C \,\lambda_{4,\alpha})$,
while $\gm_s=\gm_s(0)$ has $\sec>0$ if $s>0$ is sufficiently small, by \Cref{mainthmB}.
\end{proof}

\begin{remark}\label{rem:max-princ}
The curvature operators $R(t)\colon \wedge^2 TM\to\wedge^2 TM$ of metrics $\gm(t)$ on $M^n$ evolving under Ricci flow satisfy the PDE $\frac{\partial}{\partial t}R=\Delta R+2Q(R)$, where $Q(R)$ depends quadratically on $R$. By Hamilton's Maximum Principle, if an $\mathsf O(n)$-invariant cone $\mathcal C\subset \Sym^2_{\mathrm b}(\wedge^2TM)$ is preserved by the ODE $\frac{\dd}{\dd t}R=2Q(R)$, then it is also preserved by the above PDE. 
It was previously known that the cone $\mathcal C_{\sec>0}$ of curvature operators with $\sec>0$ is \emph{not} preserved under the above ODE on $R$ in dimensions $n\geq4$, since it is easy to find $R_0\in\partial \mathcal C_{\sec>0}$ with $Q(R_0)$ pointing outside of $\mathcal C_{\sec>0}$. Nevertheless, this observation alone does not imply the existence of metrics realizing such a family of curvature operators on some closed $n$-manifold, thus evolving under Ricci flow and losing $\sec>0$, as the above metrics $\gm_s(t)$ do.
\end{remark}

\end{document}